\renewcommand{\pd}{\partial}
\newcommand{\R}{\mathbb{R}}
\newcommand{\<}{\langle}
\renewcommand{\>}{\rangle}
\newcommand{\eps}{\epsilon}
\newcommand{\spc}{\sigma_{\scriptsize\mbox{pc}}}
\renewcommand{\Pr}{\mbox{Pr}}
\newcommand{\punif}{p_{\mbox{\scriptsize unif}}}
\newcommand{\Kunif}{K_{\mbox{\scriptsize unif}}}
\newcommand{\unif}{\mbox{\scriptsize unif}}
\newcommand{\sigmaj}{\sigma[j]}
\newcommand{\sigmai}{\sigma[i]}
\title{Compressive Wave Computation}
\author{
\begin{tabular}{c}
    Laurent Demanet\\
     Department of Mathematics \\
     Stanford University\\
     Stanford, CA 94305
\end{tabular}
\qquad
\begin{tabular}{c}
    Gabriel Peyr\'{e}\\
     CNRS and Ceremade \\
     Universit\'e Paris-Dauphine\\
     75775 Paris Cedex 16
\end{tabular}
}
\date{August 2008}        
\begin{document}
\maketitle

\begin{abstract}

This paper considers large-scale simulations of wave propagation phenomena. We argue that it is possible to accurately compute a wavefield by decomposing it onto a largely incomplete set of eigenfunctions of the Helmholtz operator, chosen at random, and that this provides a natural way of parallelizing wave simulations for memory-intensive applications.

Where a standard eigenfunction expansion in general fails to be accurate if a single term is missing, a sparsity-promoting $\ell_1$ minimization problem can vastly enhance the quality of synthesis of a wavefield from low-dimensional spectral information. This phenomenon may be seen as "compressive sampling in the Helmholtz domain", and has recently been observed to have a bearing on the performance of data extrapolation techniques in seismic imaging [T. Lin and F. Herrmann, Geophysics, 2007].

This paper shows that $\ell_1$-Helmholtz recovery also makes sense for wave computation, and identifies a regime in which it is provably effective: the one-dimensional wave equation with coefficients of small bounded variation. Under suitable assumptions we show that the number of eigenfunctions needed to evolve a sparse wavefield defined on $N$ points, accurately with very high probability, is bounded by
\[
C(\eta) \cdot \log N \cdot \log \log N,
\]
where $C(\eta)$ is related to the desired accuracy $\eta$ and can be made to grow at a much slower rate than $N$ when the solution is sparse. The PDE estimates that underlie this result are new to the authors' knowledge and may be of independent mathematical interest; they include an $L^1$ estimate for the wave equation, an $L^\infty - L^2$ estimate of extension of eigenfunctions, and a bound for eigenvalue gaps in Sturm-Liouville problems.

In practice, the compressive strategy makes sense because the computation of eigenfunctions can be assigned to different nodes of a cluster in an embarrassingly parallel way. Numerical examples are presented in one spatial dimension and show that as few as 10 percents of all eigenfunctions can suffice for accurate results. Availability of a good preconditioner for the Helmholtz equation is important and also discussed in the paper. Finally, we argue that the compressive viewpoint suggests a competitive parallel algorithm for an adjoint-state inversion method in reflection seismology.

\end{abstract}

{\bf Acknowledgements.}

We would like to thank Ralph Smith and Jim Berger of the Statistical and Applied Mathematical Sciences Institute (SAMSI) for giving us the opportunity to visit the Institute, which catalyzed the completion of this project. We are indebted to Felix Herrmann for early discussions, Paul Rubin for help on a probability question related to Proposition \ref{teo:nonuniform}, and Lexing Ying for help with a point in the numerical implementation of our preconditioner. Thanks are also due to Emmanuel Cand\`{e}s, Mark Embree, Jalal Fadili, Josselin Garnier, Mauro Maggioni, Justin Romberg, and William Symes for useful discussions. L.D. is supported in part by a grant from the National Science Foundation.

\pagebreak

\section{Introduction}

In this paper we consider a simple model for acoustic waves,
\begin{equation}\label{eq:wave}
\sigma^2(x) \frac{\pd^2 u}{\pd t^2}(x,t) - \frac{\pd^2 u}{\pd x^2}(x,t) = 0, \qquad x \in [0,1],
\end{equation}
\[
u(x,0) = u_0(x), \qquad \frac{\pd u}{\pd t}(x,0) = u_1(x),
\]
with Dirichlet ($u(0,t) = u(1,t) = 0$) or Neumann ($u'(0,t) = u'(1,t) = 0$) boundary conditions. The parameter $\sigma(x)$ is viewed as the acoustic impedance of the medium, we will assume at least that $0 < \sigma_{\min} \leq \sigma(x) \leq \sigma_{\max}$, almost everywhere. In the sequel we will see how the classical equation of a vibrating string, with parameters $\rho(x)$ and $\mu(x)$, can be transformed into (\ref{eq:wave}) in such a way that all the results of this paper hold unaffected in the more general case.

Most numerical methods for solving (\ref{eq:wave}) fall into two categories: either simulation in the time domain, by timestepping from initial and boundary data; or in the frequency domain using an expansion in terms of time-harmonic solutions. For the latter approach, when $u(x,t) = v_\omega(x) e^{i \omega t}$, then $v_\omega$ solves the Helmholtz equation
\begin{equation}\label{eq:Helmholtz}
v''_\omega(x) + \omega^2 \sigma^2(x) v_\omega(x) = 0, \qquad x \in [0,1],
\end{equation}
with Dirichlet or Neumann boundary conditions. The special values of $\omega$ for which (\ref{eq:Helmholtz}) has a solution correspond to eigenvalues $\lambda$ of the operator $\mathcal{L} = \sigma^{-2} d^2/dx^2$ with the same boundary boundary conditions, through $\lambda = -\omega^2$. The natural inner product that makes the operator $\mathcal{L}$ self-adjoint is the one of the weighted space $L^2_{\sigma^2}([0,1], \R)$,
\begin{equation}\label{eq:weighted-innerprod}
\< f,g \> = \int_0^1 f(x) g(x) \sigma^2(x) dx.
\end{equation}
(The regular $L^2$ inner product will not be used in the sequel.) Self-adjointness of $\mathcal{L}$ classically implies orthogonality of the eigenfunctions, in the sense that $\< v_{\omega_1}, v_{\omega_2} \> = \delta_{\omega_1, \omega_2}$. The operator $\mathcal{L}$ is also negative definite (Dirichlet) or negative semi-definite (Neumann) with respect to the weighted inner product, which justifies the choice of sign for the eigenvalues. As a result, any solution $u(x,t)$ of (\ref{eq:wave}) can be expanded as
\begin{equation}\label{eq:sumomega}
u(x,t) = \sum_{\omega} c_\omega(t) v_\omega(x),
\end{equation}
where
\[
c_\omega(t) = \< u_0, v_\omega \> \cos \omega t + \< u_1, v_\omega \> \frac{\sin \omega t}{\omega}.
\]

If we are ready to make assumptions on $u(x,t)$, however, equation (\ref{eq:sumomega}) may not be the only way of synthesizing $u(x,t)$ from its coefficients $c_\omega(t) = \< u(\cdot, t), v_\omega \>$. The situation of interest in this paper is when $u(x,t)$ is approximately sparse for each time $t$, i.e, peaks at a few places in $x$, and the impedance $\sigma(x)$ has minimal regularity properties. When these conditions are met, we shall see that \emph{the information of the solution $u(x,t)$ is evenly spread, and contained several times over in the full collection of coefficients $c_\omega(t)$.} In other words, there is a form of uncertainty principle to guarantee that a solution that peaks in $x$ cannot peak in $\omega$, and that it is possible to decimate $c_\omega(t)$ in such a way that a good approximation of $u(x,t)$ can still be recovered. In addition, there exists a nonlinear procedure for this recovery task, which is so simple that it may have nontrivial implications for the numerical analysis of wave equations.

\subsection{The Compressive Strategy}

How to recover a sparse, sampled function $f(j/N)$ from a set of discrete orthobasis coefficients $c_k$, which is incomplete but nevertheless ``contains the information" of $f(j/N)$, has been the concern of a vast body of recent work on compressed sensing \cite{candes-robust, donoho-cs}. In this approach, the proposed solution for recovering $f(j/N)$ is to find a vector with minimum $\ell_1$ norm among all vectors which have the observed $c_k$ as coefficients. Compressed sensing currently finds most of its applications in signal or image processing, but its philosophy turns out to be relevant for computational wave propagation as well, where the basis vectors are discretized eigenfunctions of the Helmholtz equation.

Accordingly, we formulate an $\ell_1$ problem for recovering $u(x,t)$ at fixed time $t$ from a restricted set of coefficients $c_\omega(t)$. To guarantee that these coefficients be informative about $u(x,t)$, we will \emph{sample $\omega$ at random}. For practical reasons explained later, we use the following sampling scheme: 1) draw a number $w$ uniformly at random between $0$ and some maximum value $\omega_{\max}$, 2) find the closest $\omega_{[k]}$ to $w$, such that $\lambda_{[k]} = - \omega^2_{[k]}$ is an eigenvalue, 3) add this eigenvalue to a list provided it does not already belong to it, and 4) repeat until the size of the list reaches a preset value $K$. Eventually, denote this list as $\Omega_K = \{ \omega_{[k]} : k = 1, \ldots, K \}$.

Putting aside questions of discretization for the time being, the main steps of compressive wave computation are the following. Fix $t > 0$.

\bigskip

\begin{listing}
\bigskip
\begin{enumerate}
\item Form the randomized set $\Omega_K$ and compute the eigenvectors $v_\omega(x)$ for $\omega \in \Omega_K$;
\item Obtain the coefficients of the initial wavefields as $\< u_0, v_\omega \>$ and $\< u_1, v_\omega \>$, for $\omega \in \Omega_K$;
\item Form the coefficients of the solution at time $t$ as
\[
c_\omega(t) = \< u_0, v_\omega \> \cos \omega t + \< u_1, v_\omega \> \frac{\sin \omega t}{\omega},  \quad \omega \in \Omega_K;
\]
\item Solve the minimization problem
\begin{equation}\label{eq:minell1}
\min_{\tilde{u}} \int_0^1 \sigma(x) |\tilde{u}(x)| \, dx, \quad \mbox{such that } \< \tilde{u}, v_\omega \> = c_\omega(t), \quad \omega \in \Omega_K.
\end{equation}
\end{enumerate}
\end{listing}

\bigskip

The algorithmic specifics of points 1 and 4 will be discussed at length in the sequel. The main result of this paper concerns the number $K = |\Omega_K|$ of coefficients needed to ensure that the minimizer $\tilde{u}(x)$ just introduced approximates $u(x,t)$ within a controlled error, and controlled probability of failure.

\subsection{Main Result}

Specific notions of \emph{sparsity of the solution}, and \emph{smoothness of the medium}, will be used in the formulation of our main result.

\begin{itemize}

\item {\em Sparsity of the solution}. Assume that $u[j](t)$ is the solution of a discretization of the problem (\ref{eq:wave}) on $N$ equispaced spatial points, i.e., $u[j](t) \simeq u(j/N,t)$. A central quantity in the analysis is the ``size of the essential support" $S_\eta$ of this discretized solution, i.e., in a strategy that approximates $u[j](t)$ by its $S$ largest entries in modulus, how large would $S$ need to be so that the error made is less than a threshold $\eta$ in a weighted $\ell_1$ sense. Then this particular choice of $S$ is called $S_\eta$. More precisely, for fixed $t > 0$ consider a tolerance $\eta > 0$, and the largest level $\gamma$ at which
\begin{equation}\label{eq:Seta}
\sum_{j: |u[j](t)| \leq \gamma} \sigmaj | u[j](t) |  \leq \eta,
\end{equation}
where $\sigmaj$ should be thought of as $\sigma(j/N)$, or possibly a more sophisticated variant involving cell averages. Then $S_\eta$ is the number of terms in this sum, i.e., the number of grid points indexed by $j$ such that $|u[j](t)| \leq \gamma$. In all cases $S_\eta \leq N$, but the sparser $u(x,t)$ the smaller $S_\eta$.

\item {\em Smoothness of the medium}. We consider acoustic impedances $\sigma(x)$ with small \emph{total variation} Var $\log(\sigma)$. In particular, we will require Var$(\log \sigma) < \pi$. See Section \ref{sec:three} for a discussion of total variation. Note that $\sigma(x)$ is permitted to have discontinuities in this model.

\end{itemize}


Three sources of error arise in the analysis: 1) the discretization error
\[
\tau = \| u[j](t) - u(j/N,t) \|_{\ell_2},
\]
where the $\ell_2$ norm is over $j$; 2) the truncation error $\eta$ just introduced, due to the lack of exact sparsity; and 3) a numerical error $\epsilon$ made in computing the discrete eigenvalues and eigenvectors, which shows as a discrepancy
\[
\eps = \| \tilde{c}_{\tilde{\omega}}(t) - c_\omega(t) \|_{\ell_2},
\]
for fixed $t$, and where the $\ell_2$ norm is over $\omega \in \Omega_K$. In addition to the \emph{absolute} accuracy requirement that $\epsilon$ be small, we also need the following minor \emph{relative} accuracy requirements for the eigenvalues and eigenvectors. Two quantities $A$, $B$ are said to be within a factor 2 of each other if $A/2 \leq B \leq 2A$.

\begin{definition}\label{def:faith} (Faithful discretization)
A spatial discretization of the Helmholtz equation (\ref{eq:Helmholtz}) is called \emph{faithful} if the following two properties are satisfied.
\begin{itemize}
\item The gap $|\tilde{\omega}_{n+1} - \tilde{\omega}_n|$ between two consecutive eigenvalues of the discrete equation is within a factor 2 of the gap $|\omega_{n+1} - \omega_n|$ between the corresponding exact eigenvalues;
\item There exists a normalization of the computed eigenvectors $\tilde{v}_{\tilde{\omega}}[j]$ and exact eigenvectors $v_\omega(x)$ such that the discrete norm $\left( \frac{1}{N} \sum_j \sigmaj^2 \tilde{v}_{\tilde{\omega}}[j]^2 \right)^{1/2}$ is within a factor 2 of $\| v_\omega \|_{L^2_{\sigma^2}}$, and simultaneously $\max_j | \sigmaj \tilde{v}_{\tilde{\omega}}[j]|$ is within a factor 2 of $\| \sigma v_\omega \|_{L^\infty}$.
\end{itemize}
\end{definition}


Finally, we assume exact arithmetic throughout. The following theorem is our main result.

\begin{theorem}\label{teo:main} Assume that Var$(\log \sigma) < \pi$, and that the discretization of (\ref{eq:Helmholtz}) is faithful in the sense of Definition \ref{def:faith}. Assume that $K$ eigenvectors are drawn at random according to the procedure outlined above. There exists $C(\sigma)$ such that if $K$ obeys
\begin{equation}\label{eq:boundK}
K \geq C(\sigma) \cdot S_\eta \log N \cdot \log^2(S_\eta) \log (S_\eta \log N),
\end{equation}
(with $N$ sufficiently large so that all the logarithms are greater than 1), then with very high probability the solution $\tilde{u}[j](t)$ of a discrete version of the minimization problem (\ref{eq:minell1}) obeys
\begin{equation}\label{eq:accuracy}
\| u(j/N,t) - \tilde{u}[j](t) \|_{\ell_2} \leq \frac{C_1}{\sqrt{S_\eta}} \eta + C_2 \eps + \tau.
\end{equation}
Furthermore, $C(\sigma)$ can be taken to obey
\begin{equation}\label{eq:Csigma}
C(\sigma) \leq C_3 \cdot \frac{\pi + \mbox{Var}(\log \sigma)}{\pi - \mbox{Var}(\log \sigma)} \cdot \, \exp ( 2 \mbox{Var} (\log \sigma)),
\end{equation}
where $C_3$ is a numerical constant. ``Very high probability" here means $1 - O(N^{-\alpha})$ where $\alpha$ is an affine, increasing function of $C_3$.
\end{theorem}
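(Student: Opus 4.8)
The plan is to recast the discrete version of \eqref{eq:minell1} as a canonical compressed sensing program and then invoke a restricted-isometry-based stable recovery theorem for randomly subsampled bounded orthonormal systems, feeding in two nonstandard PDE inputs: incoherence of the measurement vectors and near-uniformity of the random eigenvalue sampling. First I would introduce the weighted unknown $a[j] = \sigmaj\,\tilde u[j]/\sqrt N$, so that the objective $\int_0^1 \sigma(x)|\tilde u(x)|\,dx$ discretizes to a multiple of $\|a\|_{\ell_1}$ while each constraint $\langle \tilde u, v_\omega\rangle = c_\omega(t)$ becomes a linear equation $(\Phi a)_\omega = c_\omega(t)$ with $\Phi_{\omega,j} = \tfrac1{\sqrt N}\sigmaj\,\tilde v_{\tilde\omega}[j]$. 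By faithfulness the rows of $\Phi$ are, up to a factor $2$, unit vectors for the discrete $L^2_{\sigma^2}$ norm, so $\Phi$ is a reweighted, row-subsampled piece of an orthonormal system; and the essential-support quantity $S_\eta$ of \eqref{eq:Seta} is by construction a bound on the stable sparsity of the target $a^\natural[j] = \sigmaj\,u(j/N,t)/\sqrt N$, namely $\|a^\natural - a^\natural_{S_\eta}\|_{\ell_1} = O(\eta)$.

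Next I would supply the two PDE ingredients. For incoherence, passing to the travel-time coordinate $y=\int_0^x \sigma(s)\,ds$ and examining the first-order Pr\"ufer-type system for $(v_\omega, v_\omega'/\omega)$, a Gronwall / fixed-point argument --- which closes precisely because $\mbox{Var}(\log\sigma)<\pi$ --- would give $\|\sigma v_\omega\|_{L^\infty}\le C(\sigma)\|v_\omega\|_{L^2_{\sigma^2}}$ with $C(\sigma)\lesssim \exp(\mbox{Var}(\log\sigma))$; with the factor-$2$ slack in Definition \ref{def:faith} this bounds the coherence $\mu := \sqrt N\max_{\omega,j}|\Phi_{\omega,j}|\lesssim \exp(\mbox{Var}(\log\sigma))$. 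For the sampling, a Sturm--Liouville eigenvalue-gap bound would show consecutive $\omega_n$ spaced within constant factors of $\pi/\int_0^1\sigma$, with the ratio of largest to smallest gap controlled by $\mbox{Var}(\log\sigma)$; hence the rule ``nearest admissible $\omega_{[k]}$ to a uniform draw'' selects each eligible index with probability in $[c_-/M,\,c_+/M]$, $M$ the number of eigenvalues below $\omega_{\max}$ and $c_\pm$ depending only on $\mbox{Var}(\log\sigma)$ --- close enough to uniform that a coupon-collector comparison would let me replace the ``draw until $|\Omega_K|=K$'' scheme by i.i.d.\ sampling with replacement, at the cost of constants only.

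With these in hand I would invoke a standard RIP theorem for subsampled bounded orthonormal systems: once $K\gtrsim \mu^2\,S_\eta\log N\cdot\log^2(S_\eta)\log(S_\eta\log N)$, a suitable rescaling of $\Phi$ satisfies the restricted isometry property of order $\asymp S_\eta$ with probability $1-O(N^{-\alpha})$, $\alpha$ affine increasing in the oversampling constant; taking $C(\sigma)\asymp\mu^2$ times the gap-uniformity factor $(\pi+\mbox{Var}(\log\sigma))/(\pi-\mbox{Var}(\log\sigma))$ reproduces \eqref{eq:boundK} and \eqref{eq:Csigma}. The classical stable $\ell_1$ estimate under the RIP then yields, for the minimizer $\hat a$ of the (suitably relaxed) program, $\|a^\natural-\hat a\|_{\ell_2}\lesssim \eta/\sqrt{S_\eta}+\eps$, the $\eps$ term absorbing $\|\tilde c_{\tilde\omega}(t)-c_\omega(t)\|_{\ell_2}$ via an $\eps$-relaxed feasibility set together with a perturbation-of-the-matrix lemma for the eigenvector errors (kept harmless by the factor-$2$ faithfulness). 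Undoing the weighting, comparing $\tilde u[j]$ to $u(j/N,t)$ rather than to $u[j](t)$, and absorbing $\|u[j](t)-u(j/N,t)\|_{\ell_2}=\tau$ then gives \eqref{eq:accuracy}; an $L^1$-type estimate for the wave equation enters to keep the constants, and the behavior of the surrogate $S_\eta$, uniform in $t$.

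The main obstacle will be the $L^\infty$--$L^2$ eigenfunction estimate under merely bounded-variation impedance: since $\sigma$ need not be $C^2$, the usual Liouville reduction to a Schr\"odinger operator with bounded potential fails --- the potential is only a measure --- so one must work directly with the integral equation for the Pr\"ufer amplitude and phase in travel-time coordinates and extract the sharp threshold $\mbox{Var}(\log\sigma)<\pi$ from the contraction condition. A close second is the sampling analysis: reconciling ``sample without replacement until size $K$ from a non-uniform law'' with the i.i.d.\ bounded-orthonormal-system framework, while simultaneously verifying that the factor-$2$ faithfulness slack corrupts neither the discrete coherence bound nor the discrete eigenvalue-gap uniformity. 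Threading both $\eta$ and $\eps$ through the stability estimate is routine but needs care.
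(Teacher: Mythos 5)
Your architecture is essentially the paper's: reweight the unknown to $\sigmaj u[j]$ so the measurement vectors $\sigmaj \tilde v_{\tilde\omega}[j]$ form (a subsampled piece of) an orthonormal system, invoke the Rudelson--Vershynin RIP bound $K\gtrsim \mu^2 S_\eta\log N\cdot\log^2(S_\eta)\log(S_\eta\log N)$ together with the Cand\`es--Romberg--Tao stable recovery estimate, and feed in the two PDE inputs (an $L^\infty$--$L^2$ extension estimate for eigenfunctions to bound $\mu$, and a Sturm--Liouville gap estimate to control the departure of the sampling law from uniform). The combination of $\eta$, $\eps$, $\tau$ and the role of $S_\eta$ are all handled as in the paper.

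Two points deserve correction. First, you attach the threshold $\mbox{Var}(\log\sigma)<\pi$ to the extension estimate and call it the main obstacle; in the paper that estimate holds for \emph{any} finite $\mbox{Var}(\log\sigma)$, with constant $\sqrt2\exp(\mbox{Var}(\log\sigma))$, via a Gronwall bound on $I(x)=|v|^2+|v'|^2/(\omega^2\sigma^2)$ (no contraction condition is needed, and a fixed-point argument that ``closes only for $\mbox{Var}<\pi$'' would not naturally produce the $\exp(\mbox{Var})$ constant you claim). The $\pi$ threshold lives entirely in the gap estimate, where a Pr\"ufer phase must advance by $\pi$ between consecutive eigenvalues and the $(\log\sigma)'$ perturbation contributes at most $\mbox{Var}(\log\sigma)$ to that advance; this is what yields the ratio $(\pi+\mbox{Var})/(\pi-\mbox{Var})$ in \eqref{eq:Csigma}. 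Second, the reduction from ``nonuniform sampling without replacement until $|\Omega_K|=K$'' to the uniform-sampling RIP theorem is the one step that genuinely needs an argument, and a coupon-collector comparison is not it: the paper's Proposition \ref{teo:nonuniform} proves by induction that $\Pr(\omega_n\in\Omega_K)\geq K\min_n p_n$, embeds a uniformly-sampled subscheme by rejection sampling, and then argues that the extra (``rejected'') measurements only shrink the feasible set of the $\ell_1$ program and hence cannot degrade the recovery bound. Without something of this form your oversampling factor $\punif/\min_n p_n$ is asserted rather than derived. Neither issue changes the final statement, but both would have to be repaired to make the sketch a proof.
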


The discrete CWC algorithm is described in Section \ref{sec:disc}. The proof of Theorem \ref{teo:main} is in Sections \ref{sec:proof} and \ref{sec:proof2}.


Contrast this behavior of the $\ell_1$ problem by considering instead an $\ell_2$ regularization; by Plancherel the solution $u^\sharp$ would then simply be the truncated sum
\[
u^\sharp(x,t) = \sum_{\omega \in \Omega_K} c_\omega(t) v_\omega(x),
\]
with error
\[
\| u(\cdot, t) - u^\sharp(\cdot, t) \|^2_{L^2_{\sigma^2}} = \sum_{\omega \notin \Omega_K} |c_\omega(t)|^2.
\]
This shows that $u^\sharp$ may be very different from $u$ as soon as $\Omega_K$ is not the complete set.
In fact, the uncertainty principles discussed in this paper would show that the resulting error for sparse $u(x,t)$ and random $\Omega_K$ is large with very high probability.

The estimate (\ref{eq:accuracy}) is a statement of \emph{robustness} of the $\ell_1$ recovery method, and is intimately related to a well-established stability result in compressed sensing \cite{candes-stable}. If errors are made in the discretization, in particular in computing eigenvectors on which the scheme is based $(\eps \ne 0)$, then equation (\ref{eq:accuracy}) shows that this error will carry over to the result without being overly amplified. This robustness is particularly important in our context since in practice a compressive numerical scheme is bound to be the ``driver" of a legacy code for the Helmholtz equation.

Theorem \ref{teo:main} also states that, if the discrete solution happens to be compactly supported on a small set ($S_\eta << N$ for $\eta = 0$), and no error is made in solving for the Helmholtz equation, then the compressive scheme would recover the discrete solution \emph{exactly}, with high probability, and without using all the eigenvectors. It is not unreasonable to speak about compact support of the solution of a wave equation, since the speed of propagation is finite.

Another important point is that there exists no particular, ``optimized" choice of the eigenvectors that can essentially beat choosing them at random. The compressive strategy, whether in numerical analysis or signal processing, is intrinsically probabilistic. In fact, making a deterministic choice can possibly void the reliability of recovery as there would exist counter-examples to Theorem \ref{teo:main}.

Finally, the estimate is quite important from the viewpoint of computational complexity. In situations where $S_\eta$ is small, computing $K \sim S_\eta \log N$ eigenvectors whose identity is unimportant as long as it is sufficiently randomized, can be much more advantageous than computing all $N$ of them. This leads to an easily parallelizable, frequency-based algorithm for the wave equation requiring at most $\sim S_\eta N^2 \log N$ operations instead of the usual $\sim N^3$ for a QR method computing all eigenvectors. Complexity and parallelism questions are further discussed below.

\subsection{Why It Works}\label{sec:why}

The $\ell_1$ regularization is chosen on the basis that it promotes sparsity while at the same time defining a convex optimization problem amenable to fast algorithms. It provides an exact relaxation of $\ell_0$ problems when the vector to be recovered is sufficiently sparse, as was identified by David Donoho and co-workers in the mid-nineties in the scope of work on basis pursuit \cite{chen-basis-pursuit}. The same $\ell_1$ minimization extracts information of a sparse vector remarkably well in the presence of incomplete data, provided the data correspond to inner products in a basis like Fourier, for which there exists a form of uncertainty principle. This observation was probably first treated mathematically in 1989 in \cite{donoho-uncertainty}, and was refined using probabilistic techniques in work of Cand\`{e}s, Romberg, and Tao \cite{candes-robust}, as well as Donoho \cite{donoho-cs}, both in 2004. This line of work received much attention and came to be known as compressed sensing or compressive sampling.

Consider the $\ell_1$ minimization problem in $\R^N$,
\[
\min \| f \|_1 \qquad s.t. \qquad   f \cdot \phi_k = c_k, \qquad k \in \Omega_K,
\]
where $\Omega_K$ is a subset of $1, \ldots, N$, and $\{ \phi_k \}$ is an orthobasis. The inner product $f \cdot \phi_k$ is called a ``measurement". The following three conditions are prerequisites for guaranteeing the success of recovery of $\ell_1$ minimization.
\begin{enumerate}
\item The vector $f$ to be recovered needs to be \emph{sparse};
\item The basis vectors $\phi_k$ are \emph{incoherent}; and
\item The actual $\phi_k$ used as measurement vectors are chosen \emph{uniformly at random} among the $\phi_k$.
\end{enumerate}

Sparsity of a vector can mean small support size, but in all realistic situations it is measured from the decay of its entries sorted in decreasing order. Typically, a vector is sparse when it has a small $\ell_p$ quasi-norm for $0 < p \leq 1$, and the smaller $p$ the stronger the measure of sparsity.

Incoherence of an orthobasis $\{ \phi_k \}$ of $\R^N$ means that
\begin{equation}\label{eq:incoherence}
\sup_{j,k=1,\ldots, N} | \phi_k [j] | \leq \frac{\mu}{\sqrt{N}},
\end{equation}
where the parameter $\mu \geq 1$, simply called incoherence, is a reasonably small constant. For instance, if $\phi_k$ are the column of the isometric discrete Fourier transform, then $\mu$ attains its lower bound of $1$, and the Fourier vectors are said to be maximally incoherent. The generalization where an orthobasis is incoherent with respect to another basis is often considered in the literature, in which case incoherence means small inner products of basis vectors from the two bases. Hence in our case we also speak of incoherence with respect to translated Diracs.

The condition of uniform random sampling of the measurement vectors is needed to avoid, with high probability,  complications of an algebraic nature that may prevent injectivity of the projection of sparse vectors onto the restricted set of measurements, or at least deteriorate the related conditioning. More quantitatively, randomness allows to promote the incoherence condition into a so-called restricted isometry property (RIP). We will have more to say about this in the sequel. Note that if the measurement basis is itself generated randomly, e.g. as the columns of a random matrix with i.i.d. gaussian entries, then the further randomization of the measurement vectors is of course not necessary.

When these three conditions are met, the central question is the number $K$ of measurements needed for the $\ell_1$ recovery to succeed. Recent papers \cite{candes-near-optimal, rudelson-sparse}  show that the best answer known to date is
\[
K \geq C \cdot \mu^2 \cdot S \log N \cdot \log^2 (S) \log (S \log N),
\]
where $S$ is the number of ``big" entries in the vector to be recovered, $\mu$ is the incoherence, and $C$ is a decent constant. (The trailing log factors to the right of $S \log N$ are conjectured to be unnecessary.) Stability estimates of the recovery accompany this result \cite{candes-stable}. All this will be made precise in Section \ref{sec:two}; let us only observe for now that (\ref{eq:boundK}) is manifestly a consequence of this body of theory.

The mathematical contribution of this paper is the verification that the conditions of 1) sparsity, 2) incoherence, and 3) uniform sampling are satisfied in presence of Helmholtz measurements for solutions of the wave equation, and in the context of a practical algorithm. For all three of these requirements, we will see that a single condition of bounded variation on $\log \sigma$ suffices in one spatial dimension.

\begin{itemize}
\item We show in Section \ref{sec:sparsity} that when Var$(\log \sigma) < 1$, the wave equation (\ref{eq:wave}) obeys a Strichartz-like $L^1$ estimate
\[
\int_0^1 \sigma(x) |u(x,t)| dx \leq D(\sigma) \cdot \left( \int_0^1 \sigma(x) |u_0(x)| dx + \int_0^1 \sigma^2 (x) | \int_0^x u_1(y) dy | dx \right),
\]
for $t \leq 1 / \sigma_{\min}$, and where
\[
D(\sigma) = C \sqrt{\frac{\sigma_{\max}}{\sigma_{\min}}} \frac{1}{1- \mbox{Var} (\log \sigma)}.
\]
This result shows that, in an $L^1$ sense, the sparser the initial conditions the sparser the solution at time $t$. Hence choosing sparse initial conditions gives a control over the quality of $\ell_1$ recovery through the discrete quantity $S_\eta$ introduced above---although we don't have a precise estimate to quantify this latter point.

\item We show in Section \ref{sec:incoherence} that when Var$(\log \sigma) < \infty$, solutions of the Helmholtz equation must be extended, which translates into a property of incoherence at the discrete level. If $v_\omega$ solves (\ref{eq:Helmholtz}), the extension estimate is
\[
\| \sigma v_\omega \|_{L^\infty} \leq \sqrt{2} \exp ( \mbox{Var} (\log \sigma)) \cdot \| v_\omega \|_{L^2_{\sigma^2}}.
\]
A quadrature of the integral on the right-hand side would reveal a factor $1/\sqrt{N}$, hence a comparison with the definition (\ref{eq:incoherence}) of incoherence shows that the leading factor $\sqrt{2} \exp ( \mbox{Var} (\log \sigma))$ is in fact---modulo discretization questions---an upper bound on the incoherence $\mu$ of eigenfunctions with translated Diracs.

\item We show in Section \ref{sec:egv-gaps} that when Var$(\log \sigma) < \pi$, two eigenvalues of the Helmholtz equation (\ref{eq:Helmholtz}) cannot be too close to each other: if $\lambda_{j} = - \omega_j^2$ for $j = 1,2$ are two distinct eigenvalues, then
\[
\frac{\pi - \mbox{Var}(\log \sigma)}{\int_0^1 \sigma(x) \, dx} \leq |\omega_1 - \omega_2| \leq \frac{\pi + \mbox{Var}(\log \sigma)}{\int_0^1 \sigma(x) \, dx},
\]
This gap estimate shows that if we draw numbers uniformly at random within $[0, \omega_{\max}]$, and round them to the nearest $\omega_{[k]}$ for which $\lambda_{[k]} = -\omega^2_{[k]}$ is an eigenvalue, then the probabilities $p_{n}$ of selecting the $n$-th eigenvalue $\lambda_n$ are of comparable size uniformly in $n$---again, modulo discretization questions. This provides control over departure from the uniform distribution, and quantifies the modest penalty incurred in the bound on $K$ as the ratio of probabilities
\[
\frac{\min_n  p_{n}}{\punif} \geq \frac{\pi - \mbox{Var}(\log \sigma)}{\pi + \mbox{Var}(\log \sigma)},
\]
where $\punif$ refers to the case of uniform $\sigma$ and equispaced $\omega_n$.

\end{itemize}

Bounded variation can be thought of as the minimum smoothness requirement of an one-dimensional acoustic medium, for which wave propagation is somewhat coherent, and no localization occurs. For instance, the incoherence result physically says that the localization length is independent of the (temporal) frequency of the wave, for media of bounded variation.

All the points discussed above will be properly integrated in the justification of Theorem \ref{teo:main}, in Section \ref{sec:two}. The proofs of the three PDE results are in Section \ref{sec:three}.



\subsection{How It Works}\label{sec:howitworks}

A classical algorithm for computing all $N$ eigenvectors of the discrete Helmholtz equation would be the QR method, with complexity an iterative $O(N^3)$. Since not all eigenvectors are required however, and since the discretized operator $\mathcal{L}$ is an extremely sparse matrix, all-purpose linear algebra methods like QR acting on matrix elements one-by-one are at a big disadvantage.

Instead, it is more natural to set up a variant of the power method with randomized shifts for computing the desired eigenvectors and corresponding eigenvalues. We have chosen the restarted Arnoldi method coded in Matlab's eigs command. In our context, the power method would compute the inverse $( \mathcal{L} + w^2 )^{-1}$ for a shift $w$ chosen at random, and apply it repeatedly to a starting vector (with random i.i.d. entries, say), to see it converge to the eigenvector with eigenvalue closest to $- w^2$. The probability distribution to place on $w$ should match the spectral density of $\mathcal{L}$ as closely as possible; one important piece of a priori information is the estimate (\ref{eq:egv-gaps}) on eigenvalue gaps.

Applying $( \mathcal{L} + w^2 )^{-1}$, or in practice solving the system $( \mathcal{L} + w^2 )u = f$, can be done in complexity $O(N^2)$ using an iterative method. However, the inversion needs not be very accurate at every step, and can be sped up using an adequate preconditioner. It is the subject of current research to bring the complexity of this step down to $O(N)$ with a constant independent of frequency $w$, see \cite{ErlNab} for some of the latest developments. In this paper we use a particularly efficient preconditioner based on discrete symbol calculus \cite{DSC} for pre-inverting $\mathcal{L} - w^2$ (there is no typo in the sign), although the resulting overall complexity is still closer to $O(N^2)$ than to $O(N)$.


The resolution of $\ell_1$ minimization can be done using standard convex optimization methods \cite{boyd-convex-optim}  such as interior point for linear programming in the noiseless setting, and second order cone programming in the noisy case \cite{chen-basis-pursuit}. One can however exploit the separability of the $\lun$ norm and design specific solvers such as exact \cite{donoho-homotopy} or approximate \cite{efron-lars}  path continuation and iterative thresholding \cite{figueiredo-nowak-em,daubechies-iterated,combettes-proximal,starck-mca}. In this paper we used a simple iterative thresholding algorithm.


The complexity count is as follows:
\begin{itemize}
\item It takes $O(K N^2)$ operations to solve for $K$ eigenvectors with a small constant, and as we have seen, $K$ is less than $N$ in situations of interest;
\item Forming spectral coefficients at time $t = 0$ and their counterpart at time $t$ is obviously a $O(N)$ operation.
\item It is known that solving an $\ell_1$ problem with $N$ degrees of freedom converges geometrically using the methods explained below \cite{Bredies} and is therefore a $O(N)$ operation.
\end{itemize}

Compared to the QR method, or even to traditional timestepping methods for (\ref{eq:wave}), the compressive scheme has a complexity that scales favorably with $N$. The control that we have over the size of $K$ comes from the choice of sparse initial conditions, and also from the choice of time $t$ at which the solution is desired. If the initial conditions are not sparse enough, they can be partitioned into adequately narrow bumps by linearity; and if $t$ is too big, the interval $[0,t]$ can be divided into subintervals over which the compressive strategy can be repeated.

More details on the implementation can be found in Section \ref{sec:implementation}.





\subsection{Significance}

The complexity count is favorable, but we believe the most important feature of the compressive solver, however, is that the computation of the eigenvectors is ``embarrassingly parallel", i.e., parallelizes without communication between the nodes of a computer cluster. This is unlike both traditional timestepping and Helmholtz-via-QR methods. The $\ell_1$ solver---proximal iterative thresholding---also parallelizes nicely and easily over the different eigenvectors, without setting up any domain decomposition method. We leave these ``high-performance computing" aspects to a future communication.

Another decisive advantage of the compressive strategy is the ability to freely access the solution at different times. In inverse problems involving the wave equation, where an adjoint-state equation is used to form the gradient of a misfit functional, one is not just interested in solving the wave equation at a given time $t$, but rather in forming combinations such as
\[
\int_0^T \frac{\pd^2 u}{\pd^2 t}(x,t) q(x,t) \, dt,
\]
where $u$ solves an initial-value problem, and $q$ solves a final-value problem---the adjoint-state wave equation. A classical timestepping method requires to keep large chunks of the history in memory, because $u(x,t)$ is formed by time stepping up from $t=0$, and $q(x,t)$ is formed by time stepping down from $t = T$. The resulting memory overhead in the scope of reverse-time migration in reflection seismology is well documented in \cite{Symes-checkpoint}. The compressive scheme in principle alleviates these memory issues by minimizing the need for timestepping.

Let us also comment on the relief provided by the possibility of solving for incomplete sets of eigenvectors in the scope of compressive computing. As we saw, using an iterative power-type method has big advantages over a QR method. Could an iterative method with randomized shifts be used to compute all eigenvectors? The answer is hardly so, because we would need to ``collect" all eigenvectors from repeated random sampling. This is the problem of coupon collecting in computer science; the number of realizations to obtain all $N$ coupons, or eigenvectors, with high probability, is $O(N \log N)$ with a rather large constant. If for instance we had already drawn all but one coupons, the expected number of draws for finding the last one is a $O(N)$---about as much as for drawing the first $N/2$ ones! For this reason, even if the required number of eigenvectors is a significant fraction of $N$, the compressive strategy would remain attractive.

Finally, we are excited to see that considerations of compression in the eigenfunction domain requires new kinds of quantitative estimates concerning wave equations. We believe that the questions of sparsity, incoherence, and eigenvalue gaps are completely open in two spatial dimensions and higher, for coefficients $\sigma(x)$ that present interesting features like discontinuities.

\subsection{Related Work}

As mentioned in the abstract, and to the best of our knowledge, $\ell_1$ minimization using Helmholtz eigenfunctions as measurement basis vectors was first investigated in the context of seismic imaging by Lin and Herrmann \cite{LH} in 2007. Data extrapolation in seismology is typically done by marching the so-called single square root equation in depth; Herrmann and Lin show that each of those steps can be formulated as an optimization problem with an $\ell_1$ sparsity objective in a curvelet basis \cite{FDCT}, and constraints involving incomplete sets of eigenfunctions of the horizontal Helmholtz operator. The depth-stepping problem has features that make it simpler than full wave propagation---extrapolation operators with small steps are pseudodifferential hence more prone to preserving sparsity than wave propagators---but \cite{LH} deals with many practical considerations that we have idealized away in this paper, such as the choice of basis, the higher dimensionality, and the handling of real-life data.

Sparsity alone, without using eigenfunctions or otherwise incoherent measurements, is also the basis for fast algorithms for the wave equation, particularly when the same equation needs to be solved several times. Special bases such as curvelets and wave atoms have been shown to provide sparse representations of wave propagators \cite{Smith-wave, CD}. Speedups of a factor as large as 20, over both spectral and finite difference methods, have been reported  for a two-dimensional numerical method based on those ideas \cite{watuwe}. See also \cite{EOZ} for earlier work in one spatial dimension. Computational harmonic analysis for the solution of PDE has its roots in the concept of representing singular integral operators and solving elliptic problems using wavelets \cite{Mey1, BCR, CDD}. 

From a pure complexity viewpoint, methods based solely on decomposing the equation in a fixed basis are not entirely satisfactory however, because of the heavy tails of supposedly nearly-diagonal matrices. The resulting constants in the complexity estimates are cursed by the dimensionality of phase-space, as documented in \cite{watuwe}. There is also the question of flexibility vis-a-vis special domains or boundary conditions. We anticipate that representing the equation in an incoherent domain, instead, followed by a sparse recovery like in this paper may not suffer from the same ills.

Of course, sparsity and incoherence ideas have a long history in imaging problems unrelated to computation of PDE, including in geophysics \cite{santosa-sparse-spikes}. As mentioned earlier, from the mathematical perspective this line of work has mostly emerged from the work of Donoho and collaborators \cite{donoho-uncertainty, chen-basis-pursuit}. See \cite{Mal3} for a nice review.

Finally, previous mathematical work related to the theorems in Section \ref{sec:three} are discussed at the end of the respective proofs.

\section{The Compressive Point of View: from Sampling to Computing}\label{sec:two}

In this section we expand on the reasoning of Section \ref{sec:why} to justify Theorem \ref{teo:main}. We first introduce the quoted recent results of  sparse signal recovery.

\subsection{Elements of Compressed Sensing}\label{sec:CS}


Consider for now the generic problem of recovering a sparse vector $f_0$ of $\R^N$ from noisy measurements $y = \Phi f_0 + z \in \R^K$, with $K \leq N$, and $\| z \|_2 \leq \eps$. The $\ell_1$ minimization problem of compressed sensing is
\[
\qquad\qquad\qquad\qquad\qquad\qquad \min \| f \|_{\ell_1}, \qquad \mbox{s.t.} \qquad \| \Phi f - y \|_2 \leq \eps. \qquad\qquad\qquad\qquad\qquad\qquad (\mbox{P}_1)
\]
At this level of generality we call $\Phi$ the measurement matrix and take its rows $\phi_k$ to be orthonormal. Accurate recovery of (P${}_1$) is only possible if the vectors $\phi_k$ are incoherent, i.e., if they are as different as possible from the basis in which $f_0$ is sparse, here Dirac deltas. Cand\`{e}s, Romberg and Tao make this precise by introducing the $S$-restricted isometry constant
    $\de_S$, which is the smallest $0<\de<1$ such that
        \eql{\label{eq-defn-rih}
            (1-\de) \norm{c}_{\ell_2}^2 \leq \norm{ \Phi_T c }_{\ell_2}^2 \leq (1+\de) \norm{c}_{\ell_2}^2.
        }
        for all subsets $T$ of $\{ 1, \ldots, N \}$ such that $|T| \leq S$, for all vectors $c$ supported on $T$, and
        where $\Phi_T$ is the sub-matrix extracted from $\Phi$ by selecting the columns in $T$. Equation (\ref{eq-defn-rih}) is called \emph{$S$-restricted isometry property}.

Cand\`{e}s, Romberg and Tao proved the following result in \cite{candes-stable}.



\begin{theorem}\label{teo:stab}
For $f_0 \in \R^N$, call $f_{0,S}$ the best approximation of $f_0$ with support size $S$. Let $S$ be such that $\delta_{3S} + 3 \delta_{4S} < 2$. Then the solution $f$ of (P${}_1$) obeys
\begin{equation}\label{eq:errorCS}
\| f - f_{0} \|_2 \leq C_{1} \, \eps + C_{2} \, \frac{\| f_0 - f_{0,S} \|_{\ell_1}}{\sqrt{S}}.
\end{equation}
The constants $C_1$ and $C_2$ depend only on the value of $\delta_{3S}$ and $\delta_{4S}$.
\end{theorem}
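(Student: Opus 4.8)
The plan is the classical ``tube-and-cone'' argument behind all RIP-based recovery guarantees: control the error vector $h = f - f_0$ by confronting the geometric consequence of $\ell_1$-optimality with the analytic consequence of the restricted isometry property, then sum over a dyadic partition of the coordinates.

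First I would record the two structural facts about $h$. Since $y = \Phi f_0 + z$ with $\|z\|_{\ell_2} \le \eps$, the true vector $f_0$ is feasible for (P$_1$), so the minimizer obeys $\|f\|_{\ell_1} \le \|f_0\|_{\ell_1}$, and moreover $\|\Phi h\|_{\ell_2} \le \|\Phi f - y\|_{\ell_2} + \|\Phi f_0 - y\|_{\ell_2} \le 2\eps$ (the \emph{tube} constraint). Let $T_0$ be the support of $f_{0,S}$, i.e.\ the locations of the $S$ largest entries of $f_0$. Splitting $\|f_0\|_{\ell_1} \ge \|f_0 + h\|_{\ell_1}$ over $T_0$ and its complement and applying the triangle inequality gives the \emph{cone} constraint
\[
\|h_{T_0^c}\|_{\ell_1} \le \|h_{T_0}\|_{\ell_1} + 2\,\|f_0 - f_{0,S}\|_{\ell_1}.
\]

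Next I would rearrange $h$ on $T_0^c$ in decreasing order of magnitude and cut the coordinates into consecutive blocks $T_1, T_2, \dots$, with block sizes fixed multiples of $S$ chosen so that every restricted isometry constant invoked below is evaluated at level $3S$ or $4S$; write $T_{01} = T_0 \cup T_1$. Since each coordinate of $h$ on $T_j$ is at most the average of $|h|$ over $T_{j-1}$, one has $\|h_{T_j}\|_{\ell_2} \le S^{-1/2}\|h_{T_{j-1}}\|_{\ell_1}$ for $j \ge 2$; summing, substituting the cone constraint, and using $\|h_{T_0}\|_{\ell_1} \le \sqrt S\,\|h_{T_0}\|_{\ell_2}$ yields a bound of the shape
\[
\sum_{j \ge 2} \|h_{T_j}\|_{\ell_2} \;\le\; \|h_{T_{01}}\|_{\ell_2} + 2\,e_0, \qquad e_0 := S^{-1/2}\|f_0 - f_{0,S}\|_{\ell_1}.
\]
Then I would write $\Phi h_{T_{01}} = \Phi h - \sum_{j \ge 2}\Phi h_{T_j}$ and pair it with $\Phi h_{T_{01}}$: the left-hand side is $\ge (1-\delta_{|T_{01}|})\|h_{T_{01}}\|_{\ell_2}^2$ by (\ref{eq-defn-rih}); the term $\langle \Phi h_{T_{01}}, \Phi h\rangle$ is $\le 2\eps\sqrt{1+\delta_{|T_{01}|}}\,\|h_{T_{01}}\|_{\ell_2}$ by the tube constraint; and each cross term $\langle \Phi h_{T_{01}}, \Phi h_{T_j}\rangle$, split over $T_0$ and $T_1$, is controlled by the standard near-orthogonality corollary of the RIP for disjointly supported vectors (a restricted isometry constant at level $\le 4S$ times the product of $\ell_2$ norms). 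Inserting the tail estimate and dividing by $\|h_{T_{01}}\|_{\ell_2}$ gives $\|h_{T_{01}}\|_{\ell_2} \le C(\eps + e_0)$; the hypothesis $\delta_{3S} + 3\delta_{4S} < 2$ is precisely the condition that keeps the net coefficient of $\|h_{T_{01}}\|_{\ell_2}$ strictly below $1$, so $C$ is finite and a function of $\delta_{3S},\delta_{4S}$ alone. Finally $\|h\|_{\ell_2} \le \|h_{T_{01}}\|_{\ell_2} + \sum_{j\ge2}\|h_{T_j}\|_{\ell_2} \le 2\|h_{T_{01}}\|_{\ell_2} + 2e_0 \le C_1\eps + C_2 e_0$, which is (\ref{eq:errorCS}).

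I expect the only real obstacle to be the accounting in that last step: one has to pick the block sizes (hence the exact levels at which the various restricted isometry constants appear), bound the head, tube, and cross terms with the right multiplicities, and chain the inequalities so that the recovery threshold comes out exactly as $\delta_{3S} + 3\delta_{4S} < 2$ with constants $C_1, C_2$ independent of $\eps$ and $S$. The remaining ingredients --- feasibility of $f_0$, the cone inequality, and the decreasing-rearrangement estimate --- are routine.
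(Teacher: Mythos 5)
The paper does not actually prove Theorem \ref{teo:stab}: it is imported verbatim from Cand\`es, Romberg and Tao \cite{candes-stable}, so the only meaningful comparison is with the proof in that reference. Your skeleton --- tube constraint $\|\Phi h\|_{\ell_2}\le 2\eps$, cone constraint $\|h_{T_0^c}\|_{\ell_1}\le\|h_{T_0}\|_{\ell_1}+2\|f_0-f_{0,S}\|_{\ell_1}$, decreasing rearrangement into blocks, and a restricted-isometry bound on $h_{T_{01}}$ --- is the same, but your closing device is not. The cited proof never forms the inner products $\langle\Phi h_{T_{01}},\Phi h_{T_j}\rangle$; it takes blocks of size $3S$ and uses the plain triangle inequality
\[
2\eps \;\ge\; \|\Phi h\|_{\ell_2} \;\ge\; \|\Phi h_{T_{01}}\|_{\ell_2}-\sum_{j\ge2}\|\Phi h_{T_j}\|_{\ell_2} \;\ge\; \sqrt{1-\delta_{4S}}\,\|h_{T_{01}}\|_{\ell_2}-\sqrt{1+\delta_{3S}}\,\sum_{j\ge2}\|h_{T_j}\|_{\ell_2},
\]
together with $\sum_{j\ge2}\|h_{T_j}\|_{\ell_2}\le(3S)^{-1/2}\|h_{T_0^c}\|_{\ell_1}\le\tfrac{1}{\sqrt3}\|h_{T_{01}}\|_{\ell_2}+\tfrac{2}{\sqrt3}e_0$. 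The leading coefficient is then $\sqrt{1-\delta_{4S}}-\sqrt{(1+\delta_{3S})/3}$, which is positive precisely when $\delta_{3S}+3\delta_{4S}<2$; that is where the stated threshold comes from, and it cannot be reproduced by the inner-product route.

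The concrete gap in your version is the block-size bookkeeping, and it is not merely cosmetic. Your displayed tail estimate carries the factor $S^{-1/2}$, i.e.\ blocks of size $S$; then $|T_{01}|=2S$ and $|T_1\cup T_j|=2S$, so the restricted isometry constants enter at levels $2S$ and $3S$ (not $3S$ and $4S$ as you assert), and the chain closes only under roughly $(1+\sqrt2)\,\delta_{2S}<1$. That condition is \emph{not} implied by $\delta_{3S}+3\delta_{4S}<2$: take $\delta_{2S}=\delta_{3S}=\delta_{4S}=0.45$, which satisfies the hypothesis but gives $(1+\sqrt2)(0.45)>1$. The repair within your strategy is to take blocks of size $2S$: then $|T_{01}|=3S$, $|T_0\cup T_j|\le 3S$, $|T_1\cup T_j|=4S$, the tail bound becomes $\sum_{j\ge2}\|h_{T_j}\|_{\ell_2}\le\tfrac{1}{\sqrt2}\|h_{T_0}\|_{\ell_2}+\sqrt2\,e_0$, and the argument closes under $\delta_{3S}+\delta_{4S}<1$ --- which \emph{is} implied by the hypothesis, since $\delta_{3S}\le\delta_{4S}$ gives $2(\delta_{3S}+\delta_{4S})\le\delta_{3S}+3\delta_{4S}<2$. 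So your approach can be made to prove the theorem (under a sufficient condition that follows from, but is not identical to, the stated one), but as written the constants do not chain, and the threshold will not come out "exactly as $\delta_{3S}+3\delta_{4S}<2$" the way it does in \cite{candes-stable}.
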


Sparsity of $f_0$ is encoded in the rate of decay of $\| f_0 - f_{0,S} \|_{\ell_1}$ as $S \to \infty$.

The problem of linking back the restricted isometry property to incoherence of rows was perhaps first considered by Cand\`{e}s and Tao in \cite{candes-near-optimal}. In this paper we will use the following refinement due to Rudelson and Vershynin \cite{rudelson-sparse}. It relates the allowable value of $S$ to the number $K$ of measurements and the dimension $N$ of the vector to be recovered.

\begin{theorem}\label{teo:RV}
Let $A$ be a $N$-by-$N$ orthogonal matrix, and denote $\mu = \sqrt{N} \max_{ij} |A_{ij}|$. Extract $\Phi$ from $A$ by selecting $K$ rows uniformly at random. For every $\epsilon$ there exists a constant $C_\epsilon > 0$ such that if
\begin{equation}\label{eq:Kgeq}
K \geq C_\epsilon \cdot \mu^2 \cdot S \log N \cdot \log^2(S) \log(S \log N)),
\end{equation}
then $\Phi$ obeys the $S$-restricted isometry property (\ref{eq-defn-rih}) with $\delta _S < \epsilon$ and with very high probability.
\end{theorem}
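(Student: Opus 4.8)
The plan is to follow the empirical-process strategy underlying the Rudelson--Vershynin theorem: reduce the $S$-restricted isometry property to a deviation bound, \emph{uniform over all $S$-element supports}, for a sum of independent rank-one matrices, and then control that deviation by symmetrization together with an entropy estimate that avoids the wasteful union bound over the $\binom{N}{S}$ possible supports.

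First I would recast the sampling model. Drawing $K$ rows of $A$ uniformly at random (without replacement) is, up to standard comparison arguments, interchangeable with either $K$ i.i.d.\ indices $\omega_1,\dots,\omega_K$ uniform on $\{1,\dots,N\}$, or a Bernoulli selector model with parameter $\delta=K/N$. Writing $R_1,\dots,R_N\in\R^N$ for the rows of $A$ and $Y_k=\sqrt N\,R_k$, orthogonality of $A$ gives $\frac1N\sum_{k=1}^N Y_kY_k^* = I_N$, so for every fixed support $T$ with $|T|\le S$ one has $\mathbb{E}\big[(Y_\omega)_T(Y_\omega)_T^*\big]=I_T$, while the incoherence hypothesis gives the uniform bound $\|(Y_k)_T\|^2\le S\mu^2$. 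The $S$-RIP with $\delta_S<\epsilon$ is then precisely the statement that
\[
\Delta \;:=\; \sup_{|T|\le S}\ \Bigl\|\,\tfrac1K\textstyle\sum_{j=1}^K (Y_{\omega_j})_T(Y_{\omega_j})_T^* - I_T\,\Bigr\| \;<\;\epsilon .
\]

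The heart of the argument is to bound $\mathbb{E}\,\Delta$. I would symmetrize, replacing each summand $(Y_{\omega_j})_T(Y_{\omega_j})_T^* - I_T$ by $\eps_j(Y_{\omega_j})_T(Y_{\omega_j})_T^*$ with Rademacher signs at the cost of a factor $2$, and then, conditionally on the $\omega_j$, invoke Rudelson's selection lemma (noncommutative Khintchine plus Dudley's entropy integral), in the refined form used by Rudelson and Vershynin that keeps the supremum over all $S$-sparse supports \emph{inside} the chaining argument. The crucial point --- and the reason the final bound is $K\gtrsim\mu^2 S\cdot\mathrm{polylog}$ rather than the $K\gtrsim\mu^2 S^2\log N$ one would get from a naive union bound over supports --- is that the relevant metric entropy of the set of $S$-sparse unit vectors (equivalently, of a rescaled $\ell_1$-ball intersected with the sphere) contributes only $\sqrt{\log N}$, $\sqrt{\log S}$, and $\sqrt{\log(S\log N)}$ type factors. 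This produces, schematically, a self-referential inequality
\[
\mathbb{E}\,\Delta \;\le\; C\,\sqrt{\frac{\mu^2\,S\,\log N\,\log^2 S\,\log(S\log N)}{K}}\;\cdot\;\bigl(1+\mathbb{E}\,\Delta\bigr)^{1/2},
\]
which, solved for $\mathbb{E}\,\Delta$, yields $\mathbb{E}\,\Delta\le\epsilon$ exactly under hypothesis (\ref{eq:Kgeq}) with $C_\epsilon$ chosen large enough. Finally, to upgrade from expectation to ``very high probability'', I would note that $\Delta$ is the supremum of an empirical process whose summands have operator norm at most $S\mu^2/K$, and apply Talagrand's concentration inequality for suprema of empirical processes (or, more elementarily, a bounded-differences estimate): $\Delta$ concentrates around $\mathbb{E}\,\Delta$ with sub-Gaussian tails at scale $\sqrt{S\mu^2/K}$, so $\Pr(\Delta\ge\epsilon)=O(N^{-\alpha})$ once the constant in (\ref{eq:Kgeq}) is enlarged.

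I expect the main obstacle to be the entropy step: making rigorous the claim that the supremum over the exponentially many sparse supports costs only polylogarithmic factors, i.e.\ obtaining the sharp metric-entropy bounds for the sparse unit sphere in the operator-norm-induced metric, feeding them through Dudley's integral (with the requisite interpolation between the ``volumetric'' and ``$\ell_1$'' regimes), and then closing the resulting self-referential inequality while tracking all the logarithmic factors correctly. By contrast the symmetrization, the reduction to i.i.d.\ sampling, and the final concentration step are comparatively routine.
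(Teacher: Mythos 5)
The paper does not prove this theorem: it is imported verbatim from Rudelson and Vershynin \cite{rudelson-sparse}, with only the remark that the scaling in $\mu$ can be tracked through their argument. Your outline is a faithful description of the strategy in that cited source --- reduction to a uniform deviation bound over $S$-sparse supports, symmetrization, Rudelson's selection lemma via noncommutative Khintchine and Dudley's entropy integral, the self-referential inequality in $\mathbb{E}\,\Delta$, and a final concentration step --- so in that sense you are taking ``the same approach as the paper.'' Be aware, though, that what you have written is a roadmap rather than a proof: the step you yourself identify as the main obstacle (the metric-entropy bound for the sparse unit sphere in the operator-norm-induced metric, and feeding it through the chaining argument so that only polylogarithmic factors survive) is precisely the technical core of the Rudelson--Vershynin paper, and without executing it the argument is not self-contained. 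For the purposes of this paper that is acceptable, since the authors likewise treat the result as a black box.
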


Randomness plays a key role in this theorem; no instance of a corresponding deterministic result is currently known. Above, ``very high probability" means tending to $1$ as $O(N^{-m})$, where $m$ is an affine increasing function of $C$ in (\ref{eq:Kgeq}). The proof of Theorem \ref{teo:RV} in \cite{rudelson-sparse} uses arguments of geometric functional analysis and theory of probability in Banach spaces. Note that \cite{rudelson-sparse} prove the theorem for $\mu = 1$, but it is straightforward to keep track of the scaling by $\mu$ in their argument\footnote{For the interested reader, \cite{rudelson-sparse} denotes $\mu$ by $K$. The scalings of some important quantities for the argument in \cite{rudelson-sparse}, in their notations, are $C_{10} \sim K$, $E_1 \sim K$, $k_1 \sim K$, and $k \sim K^2$.}.


\subsection{Discretization}
\label{sec:disc}

Equations (\ref{eq:errorCS}) and (\ref{eq:Kgeq}) formally appear to be related to the claims made in Section \ref{sec:why}, but we have yet to bridge the gap with the wave equation and its discretization.

Let us set up a consistent finite element discretization of (\ref{eq:wave}) and (\ref{eq:Helmholtz}), although it is clear that this choice is unessential. The Helmholtz equation with boundary conditions can be approximated as
\[
\sum_j L_{ij} v_{\tilde{\omega}}[j] + \tilde{\omega}^2 \sum_{j} M_{ij} v_{\tilde{\omega}}[j] = 0,
\]
where the stiffness matrix $L$ (a discretization of the second derivative) and the mass matrix $M$ properly include the boundary conditions. Square brackets indicate integer indices. Here $\tilde{\omega}$ is the discrete counterpart to the exact $\omega$, and has multiplicity one, like $\omega$, by the assumption of faithful discretization (Definition \ref{def:faith}). The discrete eigenvector $v_{\tilde{\omega}}[j]$ is an approximation of the eigenfunction samples $v_\omega(j/N)$.


We assume for convenience that the mass matrix can be lumped, and that we can write $M_{ij} = \sigmai^2 \delta_{ij}$. If $\sigma$ is smooth, then $\sigmai \simeq \sigma(i/N)$; and if $\sigma$ lacks the proper smoothness for mass lumping to be a reasonable operation, then all the results of this paper hold with minimal modifications. We also assume that $L_{ij}$ is symmetric negative definite; as a result, so is $\sigmai^{-1} L_{ij} \sigmaj^{-1}$, and the usual conclusions of spectral theory hold: $\tilde{\omega} < 0$, and $\sigmaj v_{\tilde{\omega}}[j]$ are orthogonal for different $\tilde{\omega}$, for the usual dot product in $j$.

The corresponding semi-discrete wave equation (\ref{eq:wave}) is
\[
- \frac{d^2 u[i]}{dt^2}(t) + \sigmai^{-2} \sum_j L_{ij} u[j](t) = 0,
\]
\[
u[j](0) = u_0(j/N), \qquad \frac{d u[j]}{dt}(0) = u_1(j/N)
\]
Because $L_{ij}$ is the same stiffness matrix as above, the solution is
\[
u[j](t) = \sum_{\tilde{\omega}} \left( \, \cos (\tilde{\omega} t) c_{0,\tilde{\omega}} + \frac{\sin (\tilde{\omega} t)}{\tilde{\omega}}  c_{1,\tilde{\omega}} \, \right) v_{\tilde{\omega}}[j].
\]
with
\[
c_{0,\tilde{\omega}} = \sum_j \sigmaj^2  v_{\tilde{\omega}}[j] \; u[j](0), \qquad c_{1,\tilde{\omega}} = \sum_j \sigmaj^2  v_{\tilde{\omega}}[j] \; \frac{d u[j]}{dt}(0).
\]
Call
\[
c_{\tilde{\omega}}(t) = \cos (\tilde{\omega} t) c_{0,\tilde{\omega}} + \frac{\sin (\tilde{\omega} t)}{\tilde{\omega}}  c_{1,\tilde{\omega}}.
\]
A discretization error is incurred at time $t > 0$: we have already baptized it $\tau = \| u[j](t) - u(j/N,t) \|_2$ in the introduction. It is not the purpose of this paper to relate $\tau$ to the grid spacing. A nice reference for the construction of finite elements for the wave equation, in a setting far generalizing the smoothness assumptions made on $\sigma$ in this paper, is in \cite{OZ}.

Let us now focus on the discrete formulation. The ideal $\ell_1$ problem that we would like to solve is
\[
\min \sum_j \sigmaj |u[j](t)|, \qquad \mbox{s.t.} \qquad \sum_j \sigmaj^2 u[j](t) \, v_{\tilde{\omega}}[j] = c_{\tilde{\omega}}(t),
\]
where $\tilde{\omega} \in \tilde{\Omega_K}$ are chosen uniformly at random.

In practice, we must however contend with the error in computing the discrete eigenvalues $\tilde{\omega}$ and eigenvectors $v_{\tilde{\omega}}[j]$ by an iterative linear algebra method. This affects the value of $c_{\tilde{\omega}}(t)$ as well as the measurement vectors in the equality constraints above. We model these errors by introducing the computed quantities $\tilde{v}_{\tilde{\omega}}[j]$ and $\tilde{c}_{\tilde{\omega}}(t)$, and relaxing the problem to
\begin{equation}\label{eq:ell1}
\min \sum_j \sigmaj |u[j](t)|, \qquad \mbox{s.t.} \qquad \| \sum_j \sigmaj^2 u[j](t) \, \tilde{v}_{\tilde{\omega}}[j] - \tilde{c}_{\tilde{\omega}}(t) \|_2 \leq \eps,
\end{equation}
for some adequate\footnote{Here $\eps$ is assumed to be known, but in practice it is permitted to over-estimate it.}, $\eps$, like mentioned throughout in Section \ref{sec:CS}. The $\ell_2$ norm is here over $\tilde{\omega}$.

\subsection{Proof of Theorem \ref{teo:main}}\label{sec:proof}

Let us now explain why the machinery of Section \ref{sec:CS} can be applied to guarantee recovery in the $\ell_1$ problem (\ref{eq:ell1}).  First, it is convenient to view the unknown vector to be recovered in (\ref{eq:ell1}) as $\sigmaj |u[j](t)|$, and not simply $u[j](t)$. This way, we are exactly in the situation of Theorem \ref{teo:stab}: the objective is a non-weighted $\ell_1$ norm, and the measurement vectors $A_{\tilde{\omega}, j} \equiv \sigmaj v_{\tilde{\omega}}[j]$ are orthogonal with respect to the usual dot product in $j$.

Sparsity of the discrete solution $\sigmaj u[j](t)$ is measured by the number of samples $S_\eta$ necessary to represent it up to accuracy $\eta$ in the $\ell_1$ sense, as in equation (\ref{eq:Seta}). If we let $u^S[j](t)$ be the approximation of $u[j](t)$ where only the $S$ largest entries in magnitude are kept, and the others put to zero, then $S_\eta$ is alternatively characterized as the smallest integer $S$ such that
\[
\sum_j \sigmaj | u[j](t)-u^{S}[j](t) |  \leq \eta.
\]
This expression is meant to play the role of the term $\| f_0 - f_{0,S} \|_{\ell_1}$ in equation (\ref{eq:errorCS}).

One last piece of the puzzle is still missing before we can apply Theorems \ref{teo:stab} and \ref{teo:RV}: as explained earlier, the methods for drawing eigenvalues at random do not produce a uniform distribution. The following proposition quantifies the effect of nonuniformity of the random sampling on the number of measurements $K$ in Theorem \ref{teo:RV}. Recall that our method for drawing eigenvectors qualifies as \emph{without replacement} since an eigenvalue can only appear once in the set $\Omega_K$.

\begin{proposition}\label{teo:nonuniform}
Let $A$ be an $N$-by-$N$ orthogonal matrix. Denote by $\Kunif$ the number of rows taken uniformly at random in order to satisfy the accuracy estimate (\ref{eq:accuracy}) with some choice of the constants $C_1$, $C_2$. Now set up a sampling scheme, nonuniform and without replacement for the rows of $A$, as follows:
\begin{itemize}
\item In the first step, draw one row from a distribution $p_n$, and call it $n_1$;
\item At step $k$, draw one row from the obviously rescaled distribution
\[
\frac{p_n}{1- \sum_{j=1}^{k-1} p_{n_j}}
\]
and call it $n_k$. Repeat over $k$.
\end{itemize}
Denote by $K$ the number of rows taken from this sampling scheme. In order to satisfy the estimate (\ref{eq:accuracy}) with the same constants $C_1$ and $C_2$ as in the uniform case, it is sufficient that $K$ compares to $\Kunif$ as
\[
K \geq \frac{\punif}{\min_{n = 1, \ldots, N} p_n} \, \Kunif.
\]
where $\punif$ would be the counterpart of $p_n$ in the uniform case, i.e., $\punif = 1/N$.

\end{proposition}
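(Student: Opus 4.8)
The plan is to reduce the nonuniform-without-replacement scheme to the uniform-without-replacement scheme of Theorem \ref{teo:RV} via a coupling argument. The key observation is that a draw from the rescaled distribution $p_n / (1 - \sum_{j<k} p_{n_j})$ is exactly a draw from $p_n$ conditioned on not having been selected yet; equivalently, the scheme is: sample i.i.d.\ from $p_n$ \emph{with} replacement, and discard repeats. So I would first reformulate both schemes as sequences of i.i.d.\ draws (with replacement), uniform versus $p_n$-distributed, and observe that the collections of \emph{distinct} indices obtained are what feed into the restricted isometry property.

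The coupling: given $M$ i.i.d.\ uniform draws producing a distinct set $\Omega^{\unif}$, I want to produce $M$ i.i.d.\ $p_n$-draws whose distinct set $\Omega^{p}$ \emph{contains} $\Omega^{\unif}$ with high probability — because a larger measurement set only helps the RIP (adding rows can only increase $\norm{\Phi_T c}_{\ell_2}^2$ in the lower-bound direction after renormalization, and Theorem \ref{teo:RV} is monotone in the sense that a superset of a good row-set is still good). The cleanest way: since $p_n \geq (\min_m p_m) \cdot N \cdot \punif$ does \emph{not} directly give a pointwise domination of distributions, instead use the standard trick of thinning. Draw each index: with the $p_n$ distribution one can write $p_n = \beta \cdot \punif + (1-\beta) q_n$ where $\beta = N \min_m p_m \leq 1$ and $q_n = (p_n - \beta/N)/(1-\beta) \geq 0$ is a probability vector. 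Thus a single $p_n$-draw equals, with probability $\beta$, a uniform draw, and with probability $1-\beta$, a draw from $q_n$. Hence among $K$ i.i.d.\ $p_n$-draws, the subset that "came from the uniform component" is itself a sequence of i.i.d.\ uniform draws, of random size $\mathrm{Binomial}(K,\beta)$.

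Now the argument concludes: if $K \geq \Kunif / \beta = (\punif / \min_n p_n)\,\Kunif$, then $\mathbb{E}[\mathrm{Binomial}(K,\beta)] = K\beta \geq \Kunif$, and by a Chernoff bound the number of uniform-component draws is at least, say, $\Kunif$ (or $\Kunif \cdot (1-o(1))$, which can be absorbed into the constant $C_\epsilon$) except on an event of probability $O(N^{-\alpha})$, which merges into the "very high probability" failure bound already present in Theorem \ref{teo:RV}. On the complementary event, the distinct indices arising from the uniform component already form a uniformly-random row-set of size $\geq \Kunif$ (strictly: a uniform-with-replacement sample of that size, whose distinct set is the relevant object, matching exactly the hypothesis of Theorem \ref{teo:RV}), so the RIP holds with $\delta_S < \epsilon$, hence Theorem \ref{teo:stab} applies and (\ref{eq:accuracy}) follows with the same constants $C_1, C_2$. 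The extra $p_n$-component draws are harmless since they only enlarge the measurement set.

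The main obstacle is the bookkeeping around \emph{distinct} indices versus \emph{number of draws}: Theorem \ref{teo:RV} as quoted is stated for $K$ rows chosen uniformly at random (which one must read as "without replacement", i.e.\ $K$ distinct rows), whereas the thinning argument naturally produces a uniform-\emph{with}-replacement subsample whose number of distinct elements is somewhat smaller than the number of draws. One must check that this discrepancy is absorbed — either by noting that the with-replacement and without-replacement RIP thresholds differ only by constants (a standard fact, since coupon-collector overhead is at most logarithmic and can be folded into $C_\epsilon$), or by running the entire reduction at the level of with-replacement samples from the outset and invoking the with-replacement version of Theorem \ref{teo:RV}. Handling this cleanly, and confirming the monotonicity claim that a superset of RIP-good rows remains RIP-good after the mild renormalization forced by changing $K$, are the two points that require care; everything else is the routine Chernoff estimate.
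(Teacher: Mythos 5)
Your route is genuinely different from the paper's. The paper works directly with the sequential without-replacement scheme: it first proves, by induction on $k$ (telescoping the conditional probabilities), the marginal bound $\Pr(\omega_n \in \Omega_K) \geq K \min_m p_m$, compares this with the exact value $\Kunif\,\punif$ in the uniform case, and then applies rejection sampling to the \emph{selected set} $\Omega_K$ --- each selected row is retained as ``primary'' with probability $\Kunif\punif/\Pr(\omega_n\in\Omega_K)$ --- so that the primary rows have equalized inclusion probabilities and the general theory is invoked for them alone. You instead reformulate both schemes as i.i.d.\ draws with replacement (discarding repeats; this equivalence with the rescaled sequential scheme is correct), decompose the single-draw law as the mixture $p_n = \beta\cdot\frac{1}{N} + (1-\beta)q_n$ with $\beta = N\min_m p_m$, and thin the i.i.d.\ sequence to extract an exactly i.i.d.\ uniform subsequence of Binomial size, concentrated by Chernoff. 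This is arguably cleaner on one point: your thinned subsample is genuinely uniform in joint distribution, whereas the paper's rejection step only equalizes marginal inclusion probabilities. The with-/without-replacement bookkeeping you flag is indeed absorbable into the constant $C_\eps$ (and the with-replacement selector model is in fact the one Rudelson--Vershynin analyze), so that is not a real obstacle.

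The step that is wrong as stated is the monotonicity claim: ``a superset of a good row-set is still good'' is false for the restricted isometry property, because adding rows increases $\|\Phi_T c\|_{\ell_2}^2$ and can therefore destroy the \emph{upper} inequality in (\ref{eq-defn-rih}); only the lower inequality is monotone in the row set, and renormalizing by the new number of rows does not obviously repair either side uniformly over $T$. The correct way to make ``extra measurements are harmless'' precise --- and the way the paper does it --- is to bypass the RIP of the enlarged matrix entirely: write the constraint as $\|Ax-b\|_2^2 + \|Bx-c\|_2^2 \leq (\eps')^2$, where $A$ collects only the rows (your uniform-component draws, the paper's primary rows) known to satisfy the RIP. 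This implies the tube condition $\|Ax-b\|_2\leq\eps'$, the minimizer still satisfies the cone condition $\|x\|_1 \leq \|x_0\|_1$ because $x_0$ remains feasible for the enlarged (hence smaller) constraint set, and the tube-plus-cone argument of \cite{candes-stable} then yields (\ref{eq:errorCS}) using only the RIP of $A$. With that substitution your proof goes through; without it, the final step does not.
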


\begin{proof}
See the Appendix.
\end{proof}

We can now apply the theorems of compressed sensing. Call $\tilde{u}[j](t)$ the solution of (\ref{eq:ell1}). By Theorem \ref{teo:stab}, the reconstruction error is
\begin{align*}
\| \tilde{u}[j](t) - u(j/N,t) \|_2 &\leq \| u[j](t) - u(j/N,t) \|_2 + \| \tilde{u}[j](t) - u[j](t) \|_2 \\
&\leq \qquad\qquad\;\, \tau \qquad\qquad\;\, + \;\;\; C_1 \frac{\eta}{\sqrt{S_\eta}} + C_2 \eps.
\end{align*}
The number $K$ of eigenvectors needed to obtain this level of accuracy with very high probability is given by a combination of Theorem \ref{teo:RV} and Proposition \ref{teo:nonuniform}:
\begin{equation}\label{eq:Kinterm}
K \geq C \cdot \left( \frac{\punif}{\min p_n} \cdot \mu^2 \right) \cdot S_\eta \log N \cdot \log^2(S_\eta) \log(S_\eta \log N)),
\end{equation}
and where the incoherence $\mu$ is given by
\begin{equation}\label{eq:incoherence-mu}
\mu = \sqrt{N} \max_{\tilde{\omega}, j} |\sigmaj v_{\tilde{\omega}}[j]|
\end{equation}
This justifies (\ref{eq:boundK}) and (\ref{eq:accuracy}) with the particular value
\begin{equation}\label{eq:Csigma2}
C(\sigma) = C \cdot \left( \frac{\punif}{\min p_n} \cdot \mu^2 \right).
\end{equation}

It remains therefore to justify the link between $C(\sigma)$ and the smoothness of $\sigma$, given in equation (\ref{eq:Csigma}). This entails showing that
\begin{enumerate}
\item $\mu$ has a bound independent of $N$ (eigenvectors are incoherent); and
\item the ratio of probabilities can be bounded away from zero independently of $N$.
\end{enumerate}

In addition, we would like to argue that $S_\eta$ can be much smaller than $N$, which expresses sparsity. All three questions are answered through estimates about the wave equation, which are possibly new. Although the setup of the recovery algorithm is fully discrete, it is sufficient to focus on estimates the non-discretized wave equation; we explain below why this follows from the assumption of faithful discretization in Definition \ref{def:faith}.

\subsection{Sparsity, Incoherence, and Randomness}\label{sec:proof2}

We address these points in order.

\begin{itemize}

\item \emph{Sparsity}. The quantity $S_\eta$ introduced above depends on time $t$ and measures the number of samples needed to represent the discrete solution $u[j](t)$ to accuracy $\eta$ in (a weighted) $\ell_1$ space. It is hoped that by choosing sparse initial conditions, i.e., with small $S_\eta$ at time $t = 0$, and restricting the time $T$ up to which the solution is computed, $S_\eta$ will remain small for all times $0 \leq t \leq T$. If we expect to have such control over $S_\eta$, it is necessary to first show that the $\ell_1$ norm of the solution itself does not blow up in time, and can be majorized from the $\ell_1$ norm at time zero.  In Section \ref{sec:sparsity} we establish precisely this property, but for the continuous wave equation and in the $L^1$ norm. The only condition required on $\sigma$ for such an estimate to hold is Var$(\log \sigma) < 1$.

\item \emph{Incoherence}. We wish to bound $\mu = \sqrt{N} \max |\sigmaj v_{\tilde{\omega}}[j]|$ by a quantity independent of $N$ when the $v_{\tilde{\omega}}[j]$ are $\ell_2$ normalized in a weighted norm,
\[
\sum_{j=1}^N \sigmaj^2 |v_{\tilde{\omega}}[j]|^2 = 1.
\]
In Section \ref{sec:incoherence}, we show that provided Var$(\log \sigma) < \infty$, we have the continuous estimate
\[
\| \sigma v_\omega \|_{L^\infty} \leq \sqrt{2} \, \exp ( \mbox{Var} (\log \sigma)) \cdot \| v_\omega \|_{L^2_{\sigma^2}}.
\]
At the discrete level, approximating the integral $\int \sigma^2(x) |v_\omega(x)|^2 \, dx$ by the sum $\frac{1}{N} \sum_{j=1}^N \sigmaj^2 |v_\omega[j]|^2$ shows that the quantity independent of $N$ is the incoherence (\ref{eq:incoherence-mu}). More precisely, the assumption of faithful discretization allows to relate continuous and discrete norms, and conclude that
\[
\mu \leq C \cdot \sqrt{N} \| \sigma v_\omega \|_{L^\infty} \frac{\sqrt{\frac{1}{N} \sum_j \sigmaj^2 \tilde{v}^2_{\tilde{\omega}}[j]}}{\| v_\omega \|_{L^2_{\sigma^2}}} \leq C \cdot \sqrt{2} \, \exp ( \mbox{Var} (\log \sigma)),
\]
where $C$ is the numerical constant that accounts for the faithfulness of the discretization, here $C = 8$ for the arbitrary choice we made in Definition \ref{def:faith}.


\item \emph{Randomness}. Finally, for the question of characterizing the probability distribution for picking eigenvectors, recall that it derives from the strategy of picking shifts $w$ at random and then finding the eigenvalue $\lambda_{[k]} = -\omega^2_{[k]}$ such that $\omega_{[k]}$ is closest to $w$. To make sense of a probability distribution over shifts, we consider eigenvalues for the continuous Helmholtz equation (\ref{eq:Helmholtz}) in some large interval $[-W^2, 0]$, or equivalently, $0 \leq k < N$.


If $\sigma = 1$, then $\omega_n = n \pi$, with $n \geq 1$ if Dirichlet, and $n \geq 0$ if Neumann. The corresponding eigenfunctions are of course $\cos(n \pi)$ (Neumann) and $\sin(n \pi)$ (Dirichlet). In this case, a uniform sampling of the shifts $w$ would generate a uniform sampling of the frequencies $\omega_{[k]}$. We regard the spectrum in the general case when $\sigma(x) \ne 1$ as a perturbation of $\omega_n = n \pi$. So we still draw the shifts $w$ uniformly at random, and derive the corresponding distribution on the $\omega_{[k]}$ from the spacing between the $\omega_n$. Namely if we let $\Delta \omega_{\min}$, $\Delta \omega_{\max}$ be the minimum, respectively maximum distance between two consecutive $\omega_n$ in the interval $n \in [0, N-1]$---each eigenvalue is known to have multiplicity one---then the probability of picking any given eigenvalue by this scheme obeys
\[
p_n \geq \punif \, \frac{\Delta \omega_{\min}}{\Delta \omega_{\max}},
\]
where $\punif = \frac{1}{N}$ would be the corresponding probability in the uniform case.

In Section \ref{sec:egv-gaps}, we prove that the spacing between any two consecutive $\omega_n$ obeys
\[
\frac{\pi - \mbox{Var}(\log \sigma)}{\int_0^1 \sigma(x) \, dx} \leq |\omega_{n} - \omega_{n+1}| \leq \frac{\pi + \mbox{Var}(\log \sigma)}{\int_0^1 \sigma(x) \, dx},
\]
provided Var$(\log \sigma) < \infty$. By the assumption made in Definition \ref{def:faith}, the computed eigenvalues satisfy a comparable bound,
\[
\frac{1}{2} \frac{\pi - \mbox{Var}(\log \sigma)}{\int_0^1 \sigma(x) \, dx} \leq |\tilde{\omega}_n - \tilde{\omega}_{n+1}| \leq 2 \frac{\pi + \mbox{Var}(\log \sigma)}{\int_0^1 \sigma(x) \, dx}.
\]
In the continuous case, the gap estimate implies
\[
\frac{p_n}{\punif} \geq \frac{\Delta \omega_{\min}}{\Delta \omega_{\max}} \geq \frac{\pi - \mbox{Var}(\log \sigma)}{\pi + \mbox{Var}(\log \sigma)},
\]
while an additional factor 1/4 is incurred in this lower bound, in the discrete case.

Note that the probabilities of selecting the endpoint eigenvalues $0$ and $- \omega_{\max}^2$ would in principle be halved by the advocated sampling procedure, because their interval is one-sided. This is a non-issue algorithmically since the endpoint eigenvalues are much more easily computed than the others, by a power method without inversion. By default, we can include those eigenvalues in $\Omega_K$. Mathematically, adding measurements (eigenvectors) deterministically cannot hurt the overall performance as we saw in Proposition \ref{teo:nonuniform}.

\end{itemize}

The observations on incoherence and randomness can be combined with (\ref{eq:Csigma2}) to justify the form of $C(\sigma)$ in equation (\ref{eq:Csigma}):
\[
C(\sigma) \leq C \cdot \left( \frac{\punif}{\min p_n} \cdot \mu^2 \right) \leq C \cdot \frac{\pi + \mbox{Var}(\log \sigma)}{\pi - \mbox{Var}(\log \sigma)} \cdot  \exp ( 2 \mbox{Var} (\log \sigma)).
\]


\section{Sparsity, Incoherence, and Gap Estimates for the Wave Equation}\label{sec:three}





Let us first recall the basic results concerning wave equations. We only need to require $\sigma_0 \leq \sigma(x) \leq \sigma_1$ for a.e. $x \in [0,1]$, to obtain existence and uniqueness. In that context, when $u_0 \in H^1(0,1)$ and $u_1 \in L^2(0,1)$, the solution obeys $u \in C_t^0([0,T],H_x^1(0,1)) \cap C^1_t([0,T], L^2_x(0,1))$, as well as the corresponding estimate
\begin{equation}\label{eq:growth}
\| u(t,x) \|_{L^\infty_t H^1_x} \leq C(T) \cdot \left( \| u_0 \|_{H^1} + \| u_1 \|_{L^2} \right),
\end{equation}
where the supremum in time is taken (here an in the sequel) over $[0,T]$. Another background result is the continuous dependence on the coefficients $\sigma(x)$ in $L^\infty$. Let $u_k$, $k=1,2$ solve (\ref{eq:wave2}) with $\sigma_k$ in place of $\sigma$, over the time interval $[0,T]$. Then
\begin{equation}\label{eq:continuity}
\| u_1 - u_2 \|_{L^\infty_t L^\infty_x} \leq C(T,u_0,u_1) \cdot \| \sigma_1 - \sigma_2 \|_{L^\infty}.
\end{equation}
All these results are proved by standard energy estimates. See \cite{Lions} and \cite{Stolk-thesis} for existence and uniqueness; and \cite{BCL, BCL2} for continuity on the parameters.

It seems that we have lost a bit of generality in considering the wave equation (\ref{eq:wave}) with a single parameter $\sigma(x)$, instead of the usual equation of acoustics
\begin{equation}\label{eq:wave2}
\rho(x) \frac{\pd^2 u}{\pd t^2} + \nabla \cdot ( \mu(x) \nabla u) = 0, \qquad x \in \R^d,
\end{equation}
with the two parameters $\rho(x)$ (density) and $\mu(x)$ (bulk modulus). Equation (\ref{eq:wave}) however follows from (\ref{eq:wave2}) if we change the unique spatial variable $z$ into
\[
x = \int_0^z \mu^{-1}(z') dz',
\]
and put $\sigma(x) = \sqrt{\rho(x) \mu(x)}$ the local acoustic impedance. With $\mu$ bounded from above and below a.e., such a change of variables would only alter the constants in the results of sparsity and incoherence proved in this section. It would not alter the eigenvalue gap result of Section \ref{sec:egv-gaps}. Hence we can focus on (\ref{eq:wave}) without loss of generality. For reference, the local speed of sound is $v(x) = \sqrt{\mu(x)/\rho(x)}$.\footnote{Notice that a homeomorphism $z \mapsto x$ is the natural obstruction to the 1D inverse problem of recovering the parameters $\rho$ and $\mu$ from boundary measurements of $u$. As a result only the local impedance is recoverable up to homeomorphism from boundary measurements, not the local speed of sound. This observation, only valid in one spatial dimension, is discussed in great detail in \cite{BCL}.}

In the sequel we further assume that $\log \sigma$ has \emph{bounded variation}. The space $BV([0,1])$ is introduced by defining the seminorm
\[
\mbox{Var}(f)  = \sup_{\{ x_j \}} \sum_{j} | f(x_{j-1}) - f(x_j)|,
\]
where the supremum is over all finite partitions of $[0,1]$ such that $x_j$ is a point of approximate continuity of $f$. Var is called the essential variation\footnote{The variation of $f$ would be defined using the same supremum, but over \emph{all} partitions of $[0,1]$. Requiring that $x_j$ is a point of approximate continuity addresses the problem of inessential discontinuities of where $f(x)$ is not in the interval defined by the left and right limits $f(x^{-})$ and $f(x^{+})$. See \cite{Zie} on p.227 or \cite{DVL} on p.17 for a comprehensive discussion.} of $f$, and also equals the total variation $|f'|([0,1])$ of $f'$ as a signed measure on $[0,1]$. The norm of $BV([0,1])$ is then $\| \sigma \|_{BV} = \| \sigma \|_{L^1} + \mbox{Var}(\sigma)$. If the total variation is taken over the interval $[0,x]$ instead, we will denote it as Var$_x(\sigma)$.

We will need the following result for $BV$ functions in one dimension.

\begin{lemma}\label{teo:BV}
Let $f \in BV([0,1])$, and extend it outside of $[0,1]$ by the constant values $f(0^{+})$ and $f(1^{-})$ respectively. For every $\eps > 0$, consider a mollifier $\rho_\eps(x) = \frac{1}{\eps} \rho \left( \frac{x}{\eps} \right)$, where $\rho \in C^\infty(\R)$, $\rho \geq 0$, supp$(\rho) \subset [-1,1]$, and $\int_{\R} \rho(x) \, dx = 1$. Consider $f_\eps(x) = \int_{\R} \rho_\eps(x-y) f(y) \, dy$ for $x \in \R$. Then

\begin{enumerate}
\item For each $\eps > 0$, $f_\eps \in C^\infty(\R)$;
\item $\lim_{\eps \to 0} \int_0^1 | f(x) - f_\eps(x)| \, dx = 0$;
\item For each $\eps > 0$ and $x \in \R$, $\min_{x \in [0,1]} f(x) \leq f_\eps(x) \leq \max_{x \in [0,1]} f(x)$;
\item For each $\eps > 0$, $\mbox{Var}(f_\eps) = \int_0^1 |f'_\eps(x)| \, dx$;
\item $\lim_{\eps \to 0} \mbox{Var}(f_\eps) = \mbox{Var}(f)$.
\end{enumerate}
\end{lemma}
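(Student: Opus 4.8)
The plan is to treat the five items essentially in order, since each is a classical fact about mollification of $BV$ functions in one dimension, and the later ones lean on the earlier ones. Item (1) is immediate: $f$ is bounded and locally integrable after the constant extension, and $\rho_\eps \in C^\infty_c(\R)$, so differentiation under the integral sign shows $f_\eps \in C^\infty(\R)$ with $f_\eps^{(k)} = \rho_\eps^{(k)} * f$. Item (2) is the standard $L^1_{\mathrm{loc}}$ approximation property of mollifiers: after extension, $f \in L^1(\R)$ when restricted to a neighborhood of $[0,1]$, and $\|f_\eps - f\|_{L^1(K)} \to 0$ on any compact $K$; one only has to note that the constant extension is chosen precisely so that no spurious mass appears near the endpoints $0$ and $1$, so the convergence on $[0,1]$ is genuine. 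Item (3) follows from $\int \rho_\eps = 1$, $\rho_\eps \ge 0$: $f_\eps(x)$ is an average of values $f(y)$ with $y$ ranging over $[x-\eps, x+\eps]$, and after the constant extension all such values lie in $[\min_{[0,1]} f, \max_{[0,1]} f]$, so the average does too.

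The substantive items are (4) and (5). For (4), the point is that a $C^1$ function $g$ on an interval has $\mathrm{Var}(g) = \int |g'|$: this is just the fundamental theorem of calculus applied on each maximal subinterval where $g'$ has constant sign, summed up, together with the observation that for $C^1$ functions the essential variation coincides with the ordinary variation (there are no inessential discontinuities). So (4) is really a statement about smooth functions and requires no $BV$ input beyond what (1) already gives. For (5), I would prove the two inequalities separately. The inequality $\limsup_{\eps \to 0} \mathrm{Var}(f_\eps) \le \mathrm{Var}(f)$ comes from the convolution estimate: using $f_\eps' = \rho_\eps * f'$ in the sense of measures (here $f'$ is the signed measure of total mass $\mathrm{Var}(f)$ on $[0,1]$, and the constant extension contributes nothing), we get $\int_\R |f_\eps'(x)|\,dx = \int_\R |(\rho_\eps * f')(x)|\,dx \le \int_\R \rho_\eps(x)\,dx \cdot |f'|(\R) = \mathrm{Var}(f)$ by Fubini/Jensen. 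Combined with (4) this bounds $\mathrm{Var}(f_\eps)$ by $\mathrm{Var}(f)$ for every $\eps$, hence the $\limsup$. The reverse inequality $\mathrm{Var}(f) \le \liminf_{\eps\to 0}\mathrm{Var}(f_\eps)$ is lower semicontinuity of total variation under $L^1$ convergence: since $f_\eps \to f$ in $L^1$ by (2), and variation is an $L^1$-lower-semicontinuous functional (test against $C^1_c$ functions $\varphi$ with $\|\varphi\|_\infty \le 1$: $\int f\varphi' = \lim \int f_\eps \varphi' = -\lim \int f_\eps' \varphi$, whose absolute value is $\le \liminf \mathrm{Var}(f_\eps)$, then take the sup over $\varphi$), we are done.

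The main obstacle, such as it is, is bookkeeping at the endpoints: one must be careful that the constant extension by $f(0^+)$ and $f(1^-)$ does not introduce a jump of the extended function's distributional derivative at $0$ or $1$ beyond what $f'$ already carries there, and that when we write $f' $ as a measure on $[0,1]$ we are using the essential variation (so that inessential discontinuities — removable jumps where $f(x)\notin[f(x^-),f(x^+)]$ — do not wrongly inflate $|f'|([0,1])$). With the essential-variation convention stated in the paper this is consistent, but it is the one place where a sloppy argument would go wrong; everything else is a routine application of mollifier estimates and lower semicontinuity. I would also remark that items (2) and (5) together give $f_\eps \to f$ \emph{strictly} in $BV$, which is the form in which this lemma will be applied (to pass estimates proved for smooth $\sigma$ to $\sigma$ of bounded variation).
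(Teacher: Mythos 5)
Your proof is correct. The paper does not actually prove this lemma---it simply cites Ziemer's \emph{Weakly Differentiable Functions} for all five points---so what you have written is a self-contained version of exactly the standard arguments the citation points to: differentiation under the integral sign and the $L^1$ mollifier estimate for (1)--(3), the fundamental theorem of calculus for (4), and the Young-type upper bound $\int_{\R}|\rho_\eps * f'| \le |f'|(\R)$ paired with $L^1$-lower semicontinuity of the variation for (5). You also correctly identify the only delicate point, namely that the constant extension by the one-sided limits $f(0^+)$ and $f(1^-)$ adds no mass to the derivative measure at the endpoints and that the essential-variation convention is what makes $|f'|([0,1])$ equal to $\mbox{Var}(f)$; nothing further is needed.
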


\begin{proof}
All these facts are proved in \cite{Zie}: points 1 and 2 on p.22, point 3 by elementary majorations (see also p.22), point 4 on p.227, and point 5 on p.225.
\end{proof}


\subsection{Analysis of Sparsity}\label{sec:sparsity}

In this section we prove the following $L^1$ estimate.

\begin{theorem}\label{teo:sparsity}
Let $u \in C_t^0([0,T],H_x^1(0,1)) \cap C^1_t([0,T], L^2_x(0,1))$ solve (\ref{eq:wave2}) with Dirichlet or Neumann boundary conditions, and let $U_1(x) = \int_0^x u_1(y) \, dy$. Assume $\log \sigma \in BV([0,1])$, with
\[
\mbox{\emph{Var}}(\log \sigma) < 1.
\]
Let $t^\sharp = 1/\sigma_{\min}$, a lower bound on the time it takes a fully transmitted bump to travel the length of the interval $[0,1]$. For each $t > 0$, let $n$ be the smallest integer such that $t \leq n t^\sharp$; then
\begin{equation}\label{eq:sparse-Strichartz}
\| u(\cdot,t) \|_{L^1} \leq 2 \left( \frac{\sigma_{\max}}{\sigma_{\min}} \right)^{3/2} \cdot D^n \cdot \left( \| u_0 \|_{L^1}  + \| \sigma U_1 \|_{L^1} \right),
\end{equation}
with $D = \frac{1}{1 - \scriptsize\emph{Var}(\log \sigma)}$. If instead (\ref{eq:wave2}) is posed with periodic boundary conditions, then it suffices that Var$(\log \sigma) < 2$ and the same result holds with $D = \frac{1}{1 - \frac{1}{2} \scriptsize\emph{Var}(\log \sigma)}$.

\end{theorem}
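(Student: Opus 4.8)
The plan is to reduce the wave equation with variable impedance to a transport picture via d'Alembert-type factorization along characteristics, and to track how an $L^1$ norm behaves under one "pass" of a wave across the interval. The natural starting point is to introduce the travel-time variable and the Riemann invariants. Writing $p = \sigma \pd_t u$ and $q = \pd_x u$, the system (\ref{eq:wave}) becomes a first-order $2\times 2$ hyperbolic system; the characteristic variables $w^{\pm}$ diagonalize the principal part and propagate at speeds $\pm 1/\sigma$, but with a coupling term whose coefficient is proportional to $(\log \sigma)'$. Thus $\tfrac{d}{ds} w^{\pm}(\text{along char}) = \pm \tfrac12 (\log\sigma)' \, w^{\mp}$ (up to sign conventions), so the transfer of $L^1$ mass between the two families over one traversal of $[0,1]$ is controlled by $\int_0^1 |(\log\sigma)'| = \mbox{Var}(\log\sigma)$. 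Here one must be careful that $\log\sigma$ is only $BV$, not $C^1$; I would handle this by first proving the estimate for the mollified coefficient $\sigma_\eps$ from Lemma \ref{teo:BV} (whose variation converges to that of $\sigma$ and stays between the same bounds), establishing the inequality with $D_\eps = 1/(1-\mbox{Var}(\log\sigma_\eps))$, and then passing to the limit $\eps \to 0$ using the continuous dependence estimate (\ref{eq:continuity}) together with point 5 of Lemma \ref{teo:BV}.

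The core quantitative step is a \emph{one-pass lemma}: for $t \le t^\sharp = 1/\sigma_{\min}$,
\[
\| u(\cdot, t)\|_{L^1} \le C_0 \cdot D \cdot \big( \|u_0\|_{L^1} + \|\sigma U_1\|_{L^1}\big),
\]
with $C_0$ a ratio of $\sigma_{\max}/\sigma_{\min}$ powers coming from converting between $dx$, the travel-time measure $\sigma\,dx$, and the weights relating $u$, $p$, $q$. The appearance of $U_1 = \int_0^x u_1$ rather than $u_1$ itself is exactly because $u$ (not its derivative) is what we measure, so the "position" variable must be integrated once; in the characteristic coordinates, $u$ is recovered from $w^{\pm}$ by an integration, and the data $u_1$ enters through its antiderivative. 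To prove the one-pass lemma I would write the Duhamel/Picard expansion for $w^{\pm}$ along characteristics: the zeroth term is pure transport of the initial data (contributing the factor $C_0$ times $(\|u_0\|_{L^1} + \|\sigma U_1\|_{L^1})$ by a change of variables in the transport), and each subsequent reflection contributes a factor $\le \mbox{Var}(\log\sigma)$ in $L^1$ because the coupling kernel integrates to at most that; summing the geometric series $\sum_k \mbox{Var}(\log\sigma)^k = 1/(1-\mbox{Var}(\log\sigma)) = D$ gives the claim, and this is precisely where the hypothesis $\mbox{Var}(\log\sigma) < 1$ is needed for convergence. The boundary conditions (Dirichlet or Neumann) only produce reflections at $x=0,1$ which preserve or negate $w^{\pm}$ with no gain in $L^1$ mass, so they are harmless; for periodic boundary conditions the wave effectively sees an interval of "length two" before returning, which is why $\mbox{Var}(\log\sigma) < 2$ suffices there and $D$ improves to $1/(1-\tfrac12\mbox{Var}(\log\sigma))$.

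Finally, to reach arbitrary $t$ I would iterate: if $n$ is the smallest integer with $t \le n\,t^\sharp$, split $[0,t]$ into $n$ subintervals each of length $\le t^\sharp$, and apply the one-pass lemma on each, using the solution at the end of one subinterval as initial data for the next. The only subtlety is that after one pass the natural data is again a pair $(u(\cdot,t_k), \pd_t u(\cdot,t_k))$ and one must re-control the quantity $\|u\|_{L^1} + \|\sigma U\|_{L^1}$ where $U = \int_0^x \pd_t u$; but $\|\sigma U(\cdot,t_k)\|_{L^1}$ is itself bounded by (a constant times) the $L^1$ norm of the characteristic variables at time $t_k$, which the one-pass estimate already controls, so the iteration closes and produces the factor $D^n$. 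Collecting the constants yields the stated $2(\sigma_{\max}/\sigma_{\min})^{3/2} D^n$.

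\medskip

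\noindent\textbf{Main obstacle.} The delicate point is the low regularity: the characteristic/transport argument is clean when $(\log\sigma)' \in L^1$ exists as a function, but for genuine $BV$ coefficients with jumps one must either work with the mollification and pass to the limit (relying on (\ref{eq:continuity}) and Lemma \ref{teo:BV}), or interpret the reflection coefficients at jumps of $\sigma$ directly — the two approaches must be reconciled, and checking that the limiting $L^1$ bound does not lose the sharp constant $1/(1-\mbox{Var}(\log\sigma))$ is where the real care is required. A secondary technical nuisance is bookkeeping the weights $\sigma, \sigma^2$ correctly when translating between $\|u\|_{L^1}$, the energy-type variables $p,q$, and the travel-time measure, so as to land exactly on the power $3/2$.
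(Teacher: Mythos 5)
Your overall architecture --- a multiple-scattering expansion whose $k$-th term is controlled by the $k$-th power of a sum of reflection coefficients, summed as a geometric series, with mollification to handle the $BV$ coefficient and iteration over time blocks of length $t^\sharp$ --- matches the paper's in spirit, but your technical route is genuinely different. The paper does not work with Riemann invariants and a Duhamel series for the continuous coupled system; instead it approximates $\sigma$ by a piecewise constant medium (Kahane's $BV$ approximation theorem), decomposes the initial data into tent-function bumps narrow enough to meet one interface at a time, and recursively splits each bump into an explicit reflected and transmitted pair. A key device you do not have is the weighted norm $\int |v|\,\sigma^{3/2}\,dx$: energy conservation $R^2 + (\sigma_2/\sigma_1)T^2 = 1$ gives $|R| + (\sigma_2/\sigma_1)^{1/2}T \geq 1$, so this norm is monotone nondecreasing under scattering, which lets the paper bound the wavefield at any finite $t \leq t^\sharp$ by the fully scattered ($t \to \infty$) series in a periodized or mirror-extended medium --- sidestepping the combinatorics of which scattering events have occurred by a given time. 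Your Duhamel route would need its own resolution of that issue (bounding the $k$-fold iterated integral over the time simplex by the $k$-th power of the per-pass reflection sum), which is plausible but is exactly where the care lies; and the $3/2$ exponent in the prefactor is not generic bookkeeping, it is forced by the monotonicity requirement.

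The genuine gap is in your accounting for the two boundary-condition cases. Each reflection coefficient obeys $|R| \leq \frac{1}{2}|\log\sigma_+ - \log\sigma_-|$, so one traversal of $[0,1]$ accumulates a sum of reflection coefficients of at most $\frac{1}{2}\mbox{Var}(\log\sigma)$ --- this is the source of the periodic threshold $\mbox{Var}(\log\sigma) < 2$ and $D = 1/(1-\frac{1}{2}\mbox{Var}(\log\sigma))$, which is therefore the ``default'' case, not an improvement. The reason Dirichlet/Neumann is worse (threshold $\mbox{Var}(\log\sigma) < 1$ and $D = 1/(1-\mbox{Var}(\log\sigma))$) is that reflecting boundaries fold the characteristics back into $[0,1]$, so within one time block a pulse can encounter the same physical interface twice; in the paper's mirror-extended medium each reflection coefficient appears twice per window of $M$ consecutive interfaces, doubling the per-pass sum to $\mbox{Var}(\log\sigma)$. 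Your explanation --- that the periodic wave ``sees an interval of length two'' --- points in the wrong direction (a longer interval would accumulate more variation, not less), and your claim of ``a factor $\leq \mbox{Var}(\log\sigma)$ per reflection'' in the Dirichlet/Neumann case is asserted rather than derived. Without the folding argument you cannot land on the stated thresholds and constants for either case.
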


This result calls for a few remarks, which are best expressed after the proof is complete.

\begin{proof}
The proof is divided into five steps.

First, we approximate $\sigma \in BV$ by an $M$-term piecewise constant function $\sigma_{\mbox{pc}}$. Assuming the inequality holds for $\sigma_{\mbox{pc}}$, we then show how to pass to the limit $M \to \infty$.  Second, in order to show the inequality for $\sigma_{\mbox{pc}}$, we approximate the solution $u(\cdot, t)$ by a piecewise interpolant on an equispaced $N$-point grid. This step allows to break up the solution into localized pulses that interact with one discontinuity of the medium at a time. Third, we recall the formulation of reflection and transmission of pulses at interfaces of a piecewise constant medium, and simplify it using a model in which cancellations are absent. This simplification provides an upper bound for a particular weighted $L^1$ norm of the wavefield. Fourth, we present a recursive bump splitting procedure for handling the exponential number of scattering events and homogenizing the corresponding traveltimes. Fifth, we quantify the growth of the weighted $L^1$ norm in terms of combinations of reflection and transmission coefficients. In particular, sums of reflection coefficients are linked back to the total variation of $\log \sigma(x)$.

Let us tackle these points in order.

\begin{enumerate}

\item The result of adaptive $L^\infty$ approximation of $BV$ functions by piecewise constants in one dimension is due to Kahane and goes as follows. For a given partition $\{ x_j ; j = 0, \ldots, M \}$ of $[0,1]$, where $x_0 = 0$ and $x_{M} = 1$, we define an $M$-term approximant as
\begin{equation}\label{eq:Mtermsuperposition}
\sigma_{\mbox{pc}}(x) = \sum_{j = 1}^{M} \sigma_j \chi_{[ x_{j-1}, x_j)}(x).
\end{equation}
Denote  by $\Sigma_M$ the set of all such approximants, i.e., all the possible choices of a partition $\{ x_j \}$ and coefficients $\sigma_j$. Then Kahane's inequality states that
\begin{equation}\label{eq:Kahane}
\inf_{\sigma_{\scriptsize\mbox{pc}} \in \Sigma_M} \| \sigma_{\mbox{pc}} - \sigma \|_{L^\infty} \leq \frac{\mbox{Var}(\sigma)}{2 M}.
\end{equation}
There exists at least one approximant $\sigma_{\mbox{pc}}$ that reaches this bound, and that also satisfies $\mbox{Var}(\sigma_{\mbox{pc}}) \leq \mbox{Var}(\sigma)$. See the nice review article \cite{DeV} by Ron DeVore for a proof.

We saw earlier in equation (\ref{eq:continuity}) that the solution depends continuously on $\sigma$. The same bound holds, trivially, if we use the weaker $L^1$ norm for $u$:
\[
\| u_1 - u_2 \|_{L^\infty_t L^1_x} \leq C(T,u_0,u_1) \cdot \| \sigma_1 - \sigma_2 \|_{L^\infty},
\]
This inequality provides a way of passing to the limit $M \to \infty$ in (\ref{eq:sparse-Strichartz}), provided the constant $D$ in (\ref{eq:sparse-Strichartz}) is shown to be uniform in $M$.

\item Let us now decompose the initial conditions into localized bumps up to a controlled error.  Since we need $u_0 \in H^1(0,1)$ and $u_1 \in L^2(0,1)$ for consistency with the basic theory, we can approximate both $u_0$ and $U_1 = \int^x u_1$ by piecewise linear interpolants built from equispaced samples. The Bramble-Hilbert lemma handles the question of accuracy of polynomial interpolants and is for instance well covered in basic texts on finite elements.\footnote{Tent elements are only a mathematical tool in this section, they are not used in the numerical method. Also note that we are not considering a full discretization here; only the initial conditions are modified.}

\begin{lemma}\label{teo:BH}(Bramble-Hilbert)
Let $v \in H^1(0,1)$. For each positive integer $N$, we let $h = 1/N$ and
\[
v_h(x) = \sum_{i=0}^{N} v(i h) \phi_{i,h}(x),
\]
where $\phi_{i,h}$ are the ``tent" interpolating functions which were defined as
\[
\phi_{0,N}(x) = (1-Nx) \chi_{0 \leq x \leq h}(x), \qquad \phi_{N,N}(x) = (1+N(x-1)) \chi_{1-h < x \leq 1}(x),
\]
\[
\phi_{j,N}(x) = (1+N(x-jh)) \chi_{(j-1)h \leq x \leq jh}(x) + (1-N(x-jh)) \chi_{jh < x \leq (j+1)h}(x),
\]
for $1 \leq j \leq N-1$. Then
\[
\| v - v_h \|_{H^1} \leq C \cdot \| v \|_{H^1} \cdot h, \qquad \mbox{and} \qquad \| v - v_h \|_{L^2} \leq C \cdot \| v \|_{H^1} \cdot h^2.
\]
\end{lemma}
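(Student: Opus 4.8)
\emph{Proof plan.} I would run the classical Bramble--Hilbert \emph{scaling} argument, localized to the subintervals $I_j = [jh,(j+1)h]$. Since $v_h$ restricted to $I_j$ is just the affine function matching $v$ at the two endpoints $jh$ and $(j+1)h$, the error $(v-v_h)|_{I_j}$ depends only on $v|_{I_j}$; hence it suffices to prove local estimates $\| v-v_h \|_{L^2(I_j)} \le C h^{a} \| v \|_{H^1(I_j)}$ and $\| (v-v_h)' \|_{L^2(I_j)} \le C h^{b} \| v \|_{H^1(I_j)}$ with the appropriate exponents, and then square and add over $j = 0,\dots,N-1$, using $\sum_j \| v \|_{H^1(I_j)}^2 = \| v \|_{H^1(0,1)}^2$. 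To obtain each local estimate, pull $I_j$ back to the fixed reference interval $\hat I = [0,1]$ by $x = jh + h\hat x$, $\hat v(\hat x) = v(jh + h\hat x)$; nodal linear interpolation commutes with this affine map, so $\widehat{v_h} = \hat\Pi \hat v$ where $\hat\Pi w = (1-\hat x) w(0) + \hat x\, w(1)$ is an operator independent of $j$ and $h$. The dilation homogeneity of the norms, $\| w \|_{L^2(I_j)} = h^{1/2} \| \hat w \|_{L^2(\hat I)}$ and $| w |_{H^k(I_j)} = h^{1/2 - k} | \hat w |_{H^k(\hat I)}$, then converts any reference-interval bound into the desired power of $h$, with an $h$-independent constant because it is inherited from the single fixed interval $\hat I$.

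On $\hat I$ the argument is purely functional-analytic. In one dimension $H^1(\hat I)$ embeds continuously into $C(\hat I)$, so the point evaluations $w \mapsto w(0)$ and $w \mapsto w(1)$ are bounded functionals, $\hat\Pi$ is bounded on $H^1(\hat I)$, and hence so is $E := I - \hat\Pi$. Moreover $\hat\Pi$ reproduces affine functions, so $E$ annihilates $P_1$, in particular the constants. For any polynomial $p$ in the kernel of $E$, boundedness of $E$ gives $\| E \hat v \|_{H^1(\hat I)} = \| E(\hat v - p) \|_{H^1(\hat I)} \le \| E \| \, \| \hat v - p \|_{H^1(\hat I)}$; taking $p$ to be the mean value of $\hat v$ and invoking the Poincar\'e inequality on $\hat I$ yields $\| E \hat v \|_{L^2(\hat I)} + | E \hat v |_{H^1(\hat I)} \le C\, | \hat v |_{H^1(\hat I)}$. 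Rescaling this to $I_j$ and summing gives the global error bounds; because $L^2$ and $H^1$ norms pick up opposite powers of $h^{1/2}$ under pull-back versus push-forward, the $L^2$ error comes out one full power of $h$ smaller than the $H^1$ error. If the interpolated function lies in $H^2$, one subtracts instead its affine Taylor polynomial, $E$ now absorbs all of $P_1$, and each of the two estimates gains a further power of $h$.

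The step I expect to be the main obstacle is the exponent bookkeeping that matches the power of $h$ to the regularity actually in play: the power obtained equals the number of polynomial degrees reproduced by $\hat\Pi$ minus the degree of the Taylor polynomial one can afford to subtract, the latter limited by the Sobolev regularity of $v$. Thus the displayed rates $h$ (in $H^1$) and $h^2$ (in $L^2$) are exactly those delivered by the scaling once the interpolated function is controlled in $H^2$ (equivalently, once its affine part is subtracted), while for a function merely in $H^1$ the scaling gives the one-power-weaker rates --- which still tend to zero and which already suffice for decomposing the initial data $u_0$ and $U_1 = \int^x u_1$ into localized bumps up to a controlled error in the proof of Theorem~\ref{teo:sparsity}. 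A secondary point to keep track of is that nodal (pointwise) interpolation is legitimate here only because of the one-dimensional embedding $H^1 \hookrightarrow C$: this is the single place the spatial dimension enters the argument, and it has no analogue in higher dimensions.
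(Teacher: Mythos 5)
The paper gives no proof of this lemma --- it defers to ``basic texts on finite elements'' --- so the only meaningful comparison is with the standard argument, and that is exactly what you reproduce: localize to the cells $I_j$, pull back to the reference interval, use boundedness of $I-\hat\Pi$ on $H^1(\hat I)$ together with the fact that $\hat\Pi$ reproduces affine functions, quotient out polynomials via Poincar\'e, and rescale. That machinery is correct, and you are also right on the key substantive point: the rates $h$ in $H^1$ and $h^2$ in $L^2$ displayed in the lemma require $\|v\|_{H^2}$ on the right-hand side, not $\|v\|_{H^1}$. As stated the lemma is in fact false: for $v(x)=\sin(2\pi Nx)/(2\pi N)$ one has $\|v\|_{H^1}$ of order one, $v(ih)=0$ for all nodes so $v_h\equiv 0$, and hence $\|v-v_h\|_{H^1}$ of order one while the claimed bound $Ch\|v\|_{H^1}$ tends to zero.

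The one slip is in your closing sentence. For $v$ merely in $H^1$ the scaling argument gives $\|v-v_h\|_{L^2}\le Ch\,|v|_{H^1}$ but only $\|v-v_h\|_{H^1}\le C\,|v|_{H^1}$; the latter does \emph{not} tend to zero as $h\to 0$, so it is not accurate that the one-power-weaker rates ``still tend to zero.'' Since the application in the proof of Theorem \ref{teo:sparsity} runs the energy estimate (\ref{eq:growth}) on $u-u_h$ and therefore needs $\|u_0-u_{0,h}\|_{H^1}\to 0$ (and likewise for $U_1$), you must add a density step: the interpolation operators are uniformly bounded on $H^1(0,1)$ (by Cauchy--Schwarz on each cell, $|v_h|_{H^1}\le|v|_{H^1}$, plus the one-dimensional sup-norm embedding for the $L^2$ part), and they converge on the dense subspace $H^2$, hence $\|v-v_h\|_{H^1}\to 0$ for every $v\in H^1$, without a rate. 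That qualitative convergence is all the proof of Theorem \ref{teo:sparsity} actually uses (``which tends to zero as $h\to 0$''), so the surrounding argument survives, but the rates printed in the lemma hold only under an $H^2$ hypothesis.
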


Consider now the solution $u_h(\cdot, t)$ of (\ref{eq:wave2}) with the piecewise linear interpolants $u_{0,h}$ and $U_{1,h}$ substituted for the initial conditions $u_0$ and $U_1$. The growth estimate (\ref{eq:growth}) provides a way to control the discrepancy $(u_h - u)(\cdot, t)$. With $L^\infty_t$ denoting $L^\infty(0,T)$, we have
\begin{align*}
\| u - u_h \|_{L^\infty_t L^1_x} &\leq \| u - u_h \|_{L^\infty_t H^1_x} \\
&\leq C(T) \cdot \left(\| u_0 - u_{0,h} \|_{H^1} + \| U_1 - U_{1,h} \|_{H^1} \right) \qquad \mbox{by }(\ref{eq:growth}) \\
&\leq C(T) \cdot \left(\| u_0 \|_{H^1} + \| U_1 \|_{H^1} \right) \cdot h \qquad\qquad \mbox{by Lemma } \ref{teo:BH},
\end{align*}
which tends to zero as $h \to 0$.


It is sufficient to argue that (\ref{eq:sparse-Strichartz}) holds for localized initial conditions of the type $\phi_{i,h}(x)$. Indeed, if we let $u_{0,h} = \sum u_{0,i} \phi_{i,h}(x)$, and denote by $\phi_{i,h}(x,t)$ the solution of the wave equation with $\phi_{i,h}(x)$ in place of $u_0$ and $0$ for $U_1$, then
\begin{align*}
\| u_h(\cdot, t) \|_{L^1} &\leq \left( \sum_i |u_{0,i}| \right) \cdot \| \phi_{i,h}(\cdot,t) \|_{L^1} \\
&\leq \left( \sum_i |u_{0,i}| \right) \cdot D \| \phi_{i,h} \|_{L^1} \\
&\leq 2 D \cdot \| u_{0,h} \|_{L^1}.
\end{align*}
The last inequality is a simple $L^1$---$\ell_1$ equivalence property for piecewise affine functions on equispaced grids. The corresponding inequality for $u(x,t)$ and $u_0$ then follows after taking the limit $h \to 0$, provided we show that $D$ is independent of $h$.

Next, we explain how to control the $L^1$ norm of traveling bump solutions $u_h(\cdot, t)$.

\item Equation (\ref{eq:wave2}) has an explicit solution when $\sigma_{\scriptsize\mbox{pc}}$ is piecewise constant. Let us start by rehearsing the textbook case of a medium with impedance $\sigma_1$ for $x < x_0$, and $\sigma_2 \ne \sigma_1$ for $x \geq x_0$. Assume that both $u_0$ and $U_1 = \int^x u_1$ are supported on $\{ x: x < 0 \}$, and form the linear combinations
\[
f(x) = u_0(x) - \sigma_1 U_1(x), \qquad g(x) = u_0(x) + \sigma_1 U_1(x).
\]
For small times, $f$ will give rise to right-going waves and $g$ to left-going waves. Without loss of generality, let us assume $g = 0$. Then the solution is given by
\begin{equation}\label{eq:reflection}
u(x,t) = \left\{ \begin{array}{ll}
    f(x-t / \sigma_1) + R f(2 x_0 -x- t / \sigma_1) & \mbox{if } x \leq x_0; \\
    T f(x_0 + \frac{\sigma_2}{\sigma_1}(x - x_0 - t / \sigma_2)) & \mbox{if } x > x_0. \end{array} \right.
\end{equation}
In order for both $u$ and $\frac{\pd u}{\pd x}$ to be continuous at $x = 0$, we need to impose that the reflection and transmission coefficients be determined as
\[
R = \frac{1 - \sigma_2/\sigma_1}{1 + \sigma_2 / \sigma_1}, \qquad T = \frac{2}{1 + \sigma_2/\sigma_1}.
\]
Note that the situation of a wave, initially supported on $\{ x: x > 0\}$, and reflecting at the interface from the right, is entirely analogous. The reflection and transmission coefficients would be obtained by interchanging $\sigma_1$ and $\sigma_2$. Let us call the situation described by (\ref{eq:reflection}) a \emph{single scattering event}.

In order to study the growth of $\int |u(\cdot,t)| dx$, it is important to remove the cancellations that occur in (\ref{eq:reflection}) when $R < 0$. For this purpose, decouple wavefields as
\begin{equation}\label{eq:reflection1}
u_{1}(x,t) = \left\{ \begin{array}{ll}
    f(x-t / \sigma_1) & \mbox{if } x \leq x_0; \\
    T f(x_0 + \frac{\sigma_2}{\sigma_1}(x - x_0 - t / \sigma_2)) & \mbox{if } x > x_0. \end{array} \right.
\end{equation}
\begin{equation}\label{eq:reflection2}
u_{2}(x,t) = \left\{ \begin{array}{ll}
    R f(2 x_0 -x- t / \sigma_1) & \mbox{if } x \leq x_0; \\
    0 & \mbox{if } x > x_0. \end{array} \right.
\end{equation}
We have $u = u_1 + u_2$, but the point of introducing the couple $(u_1,u_2)$ is that there is one particular weighted $L^1$ norm that never decreases in time, namely the $L_{\sigma^{3/2}}^1$ norm defined as
\[
||| v ||| := \int_0^1 |v(x)| \, (\sigma(x))^{3/2} \,  dx, \qquad ||| (v_1,v_2) ||| := ||| v_1 ||| + ||| v_2 |||.
\]
Indeed, $||| u(\cdot, 0) ||| = ||| f |||$ in $\sigma_1$, and it is straightforward to write
\[
||| u_1(\cdot,t) ||| + ||| u_2 (\cdot, t) ||| =  \int_{-\infty}^{-t/\sigma_1} |f(x)| \sigma_1^{3/2} dx + \left[ |R| + \left( \frac{\sigma_2}{\sigma_1} \right)^{1/2} T \right] \int_{-t/\sigma_1}^{\infty} |f(x)| \sigma_1^{3/2} dx.
\]
Since $R^2 + \frac{\sigma_2}{\sigma_1} T^2 = 1$ (conservation of energy), taking the square root of each term in this convex combination yields
\[
|R| + \left( \frac{\sigma_2}{\sigma_1} \right)^{1/2} T \geq 1.
\]
It is convenient to write $\tilde{T} = \left( \frac{\sigma_2}{\sigma_1} \right)^{1/2} T$. Upper and lower bounds for the $L^1_{\sigma^{3/2}}$ norm of the couple $(u_1,u_2)$ follow:
\begin{equation}\label{eq:upper-lower}
||| f ||| \leq ||| u_1(\cdot,t) ||| + ||| u_2 (\cdot, t) ||| \leq \left[ |R| + \tilde{T} \right] \cdot ||| f |||.
\end{equation}


\item We can treat the more general case of a piecewise constant medium by decomposing the initial conditions $u_0$ and $U_1$ into a collection of small bumps, following the preceding discussion of a single interface. Fix a partition $\{ x_j : 0 \leq j \leq M \}$ of $[0,1]$ defining an $M$-term approximant $\sigma_{\scriptsize\mbox{pc}}$, such that $\sigma_{\scriptsize\mbox{pc}}(x) = \sigma_j$ when $x_{j-1} < x \leq x_{j}$ for $j \geq 1$. Let us choose the parameter $h$ in the construction of the tent functions $\phi_{i,h}$ small enough that scattered bumps intersect with at most one discontinuity of $\spc$ at a time. Since the minimum and maximum traveling speed are $1/\sigma_{\max}$ and $1/\sigma_{\min}$ respectively, it suffices to take
\begin{equation}\label{eq:hbound}
h = \frac{1}{2} \frac{\sigma_{\min}}{\sigma_{\max}} \min_{1 \leq j \leq M} |x_{j-1} - x_{j}|
\end{equation}
Let us generically call $\phi(x)$ such a bump, and assume that it is supported on $(x_{j-1}, x_{j})$ for some $0 \leq j \leq M$. It gives rise to left- and right-going waves.

\begin{itemize}
\item Consider $\phi$ as a right-going initial condition of the wave equation; namely, $u_0 = \phi/2$ and $U_1 = - \phi/(2 \sigma_j)$. Choose a time $t^*$ large enough that the first term $\phi(x - t^*/\sigma_j)$ vanishes in (\ref{eq:reflection}), but small enough that no other scattering event than the one at $x_j$ has taken place yet. Then the solution takes the form $u = u_1 + u_2$, with
\begin{equation}\label{eq:2bumps}
u_1(x,t^*) = [\mathcal{R}_{j,r} \phi](x + t^* / \sigma_j), \qquad u_2(x,t^*) = [\mathcal{T}_{j,r} \phi] (x - t^* / \sigma_{j+1}),
\end{equation}
where we have introduced reflection and transmission operators
\[
[\mathcal{R}_{j,r}\phi] (x) = R_{j,r} \phi(2 x_j -x), \qquad [ \mathcal{T}_{j,r}\phi](x) = T_{j,r} \phi(x_j + \frac{\sigma_{j+1}}{\sigma_{j}}(x-x_j)),
\]
with
\[
R_{j,r} = \frac{1 - \sigma_{j+1}/\sigma_{j}}{1 + \sigma_{j+1} / \sigma_{j}}, \qquad T_{j,r} = \frac{2}{1 + \sigma_{j+1}/\sigma_{j}}.
\]
The subscript $r$ refers to the fact that the pulse $\phi$ came from the right.
\item If instead $\phi$ had been chosen to correspond to a left-going bump in $(x_{j-1}, x_j)$, then we would have had
\[
u_1(x,t^*) = [\mathcal{R}_{j,\ell} \phi](x - t^* / \sigma_j), \qquad u_2(x,t^*) = [\mathcal{T}_{j,r} \phi] (x + t^* / \sigma_{j+1}),
\]
where now
\[
[\mathcal{R}_{j,\ell}\phi] (x) = R_{j,\ell} \phi(2x_j -x), \qquad [ \mathcal{T}_{j,\ell}\phi](x) = T_{j,\ell} \phi(x_j + \frac{\sigma_{j-1}}{\sigma_{j}}(x-x_j)),
\]
and
\[
R_{j,\ell} = \frac{1 - \sigma_{j-1}/\sigma_{j}}{1 + \sigma_{j-1} / \sigma_{j}}, \qquad T_{j,\ell} = \frac{2}{1 + \sigma_{j-1}/\sigma_{j}}.
\]
\end{itemize}


In both cases, the original bump disappears at $t = t^*$ and give rise to a couple $(u_1, u_2)$. In the regime of multiple scattering when $t > t^*$, bumps are \emph{recursively split and removed}, using the above characterization. For instance, at time $t^*$, we can restart the wave equation from the left-going bump $[\mathcal{R}_{j,r} \phi](x + t^* / \sigma_j)$ and submit it to scattering at $x_{j-1}$; and independently consider the right-going bump $[\mathcal{T}_{j,r} \phi] (x - t^* / \sigma_{j+1})$ and its scattering at $x_{j+1}$. Applying this procedure recursively after each scattering event, a binary tree in space-time is created, whose nodes are the scattering events and whose edges are the broken bicharacteristics.

We therefore consider a collection of wavefields, i.e., an element $\mathbf{u} \in B(L^1_{\sigma^{3/2}})$ where $B$ is an unordered set of bumps, equipped with the norm $||| \mathbf{u} ||| = \sum_b ||| u_b |||$, and such that the solution of the wave equation is recovered as $u = \sum_b u_b$. Obviously, $||| u ||| \leq ||| \mathbf{u} |||$.

The upper and lower bounds (\ref{eq:upper-lower}) on the $L_{\sigma^{3/2}}^1$ norm of a singly scattered wavefield can be applied recursively to address the multiple scattering situation. Every reflected bump picks up a factor $R_{j,\ell}$ or $R_{j,r}$ as appropriate, and similarly every transmitted bump picks up a factor $\tilde{T}_{j,r} = \sqrt{\sigma_{j+1}/\sigma_j} \, T_{j,r}$ or $\tilde{T}_{j,\ell} = \sqrt{\sigma_{j-1}/\sigma_j} \, T_{j,\ell}$ as appropriate.


For fixed time $t$, evaluating the number of scatterings that have taken place is an overwhelming combinatorial task. Instead, let $t^{\sharp} = \sigma_{\min} = \min_j \sigma_j$ be a lower bound on the total time it takes a nonreflecting bump to travel the interval $(0,1)$.

Consider now the case of periodic boundary conditions. Dirichlet and Neumann boundary conditions will be treated in point 6 below. Define the extended medium
\[
\sigma_{\scriptsize\mbox{ext}}(x) = \left\{ \begin{array}{ll}
        \spc(0) & \mbox{if $x \leq -1$};\\
        \spc(x+1) & \mbox{if $-1 < x \leq 0$};\\
        \spc(x) & \mbox{if $0< x \leq 1$};\\
        \spc(x-1) & \mbox{if $1< x \leq 2$};\\
        \spc(1) & \mbox{if $x \geq 2$}.\end{array} \right.
\]
With the same initial conditions supported inside $[0,1]$, any scattering that takes place in $\spc$ within the time interval $[0,t^\sharp]$ would also take place in $\sigma_{\scriptsize\mbox{ext}}$ within the same time interval. Since by equation (\ref{eq:upper-lower}) the $L^1_{\sigma^{3/2}}$ norm $||| \mathbf{u}(\cdot, t) |||$ always increases during and after scattering, it is safe to bound $||| \mathbf{u}(\cdot, t) |||$ for $t \leq t^\sharp$ by the $L^1_{\sigma^{3/2}}$ norm of the wavefield in the medium $\sigma_{\scriptsize\mbox{ext}}$ for times $t \geq t^\sharp$, in particular $t \to \infty$. (This reasoning is why we needed the lower bound in (\ref{eq:upper-lower}), hence the introduction of the special weighted norm.)

\item  Finally, let us now bound the $L^1_{\sigma^{3/2}}$ norm of the recursively split wavefield $\mathbf{u}$ in the medium $\sigma_{\scriptsize\mbox{ext}}$, and show that it converges to a bounded limit when $t \to \infty$.

Again, assume without loss of generality that the initial bump $\phi$ is right-going, and supported in some interval $[x_{k-1}, x_k] \subset [0,1]$. We extend the partition $\{ x_j \}$ of $[0,1]$ into the partition $\{ x_j - 1 \} \cup \{ x_j \} \cup \{ x_j+1 \}$ of $[-1,2]$, indexed by the single parameter  $-M+1 \leq j \leq 2M$ in the obvious way.

We can identify various contributions to the bound on $||| \mathbf{u} |||$:
\begin{itemize}
\item The fully transmitted bump, with magnitude $\prod_{j = k}^{2M-1} \tilde{T}_{j,r} \leq 1$.

\item The bumps undergoing one reflection, with combined magnitude less than
\[
\sum_{i_1 = k}^{k+M-1} \, (\prod_{k \leq j_1 < i_1} \tilde{T}_{j_1,r}) \, |R_{i_1,\ell}| \, ( \prod_{j_2 < i_1} \tilde{T}_{j_2,\ell} ) \leq \sum_{i = 0}^{M-1} R_{i,\ell}.
\]
\item The bumps undergoing two reflections, with combined magnitude less than
\[
\sum_{i_1 = k}^{k+M-1} \sum_{i_2 = i_1 - M}^{i_1-1} \, (\prod_{k \leq j_1 < i_1} \tilde{T}_{j_1,r}) \, |R_{i_1,\ell}| \, ( \prod_{j_2 < i_1} \tilde{T}_{j_2,\ell} ) \, |R_{i_2,r}| \, (\prod_{j_3 > i_2} \tilde{T}_{j_3,r}) \leq \sum_{i = 0}^{M-1} |R_{i,\ell}| \, \cdot \, \sum_{i = 0}^{M-1} |R_{i,r}|;
\]

etc.
\item The bumps undergoing $2n$ reflections, with combined magnitude less than
\[
\left( \sum_{i=0}^{M-1} |R_{i,\ell}| \right)^n \, \cdot \left( \sum_{i=0}^{M-1} |R_{i,r}| \right)^n.
\]

\end{itemize}

Sums of reflection coefficients can be related to the total variation of $\log \sigma$; by making use of the identity
\[
\frac{|1-x|}{|1+x|} \leq \frac{1}{2} | \log x |, \qquad x > 0,
\]
we easily get
\[
\sum_{i=0}^{M-1} |R_{i,\ell}| \leq \frac{1}{2} \sum_{i=0}^{M-1} |\log \sigma_{j} - \log \sigma_{j-1}| \leq \frac{1}{2} \mbox{Var}(\log \sigma).
\]
The same bound holds for $\sum |R_{i,r}|$. The bound for $||| \mathbf{u} |||$ is a geometric series with sufficient convergence criterion
\[
\mbox{Var}(\log \sigma) < 2,
\]
and value
\[
||| \mathbf{u} ||| \leq \frac{1}{1 - \frac{1}{2}\mbox{Var}(\log \sigma)} ||| \phi |||.
\]
For times beyond $t^\sharp$, of the form $t \leq n t^\sharp$ for some integer $n$, this construction can be iterated and the factor $[1 - \frac{1}{2}\mbox{Var}(\log \sigma)]^{-1}$ needs to be put to the power $n$.

We have taken $\phi$ to be a right-going bump so far, but if instead we consider general initial conditions $u_0$ and $U_1$ over the same support, then we should form two one-way bumps as $\phi_{\pm} = u_0 \pm \sigma_j U_1$ in the interval $[x_{j-1}, x_j]$. We can now 1) go back to $u$ through $||| u ||| \leq ||| \mathbf{u} |||$, 2) pass to the limits $M \to \infty$, $h \to 0$, and 3) use the equivalence of $L^1$ and $L^1_{\sigma^{3/2}}$ norms to gather the final bound as
\[
\| u(\cdot, t) \|_{L^1} \leq \frac{2 \left( \sigma_{\max} / \sigma_{\min} \right)^{3/2}}{\left( 1 - \frac{1}{2}\mbox{Var}(\log \sigma) \right)^{n}} \; ( \| u_0 \|_{L^1} + \| \sigma U_1 \|_{L^1} ), \qquad \mbox{when  } t \leq n t^\sharp.
\]

\item We now return to the case of Dirichlet or Neumann boundary conditions. Bumps meeting $x = 0$ or $x=1$ reflect back inside $[0,1]$ with no modification in the $L^1$ or $L^1_{\sigma^{3/2}}$ norm. The extended medium giving rise to equivalent dynamics should then be defined by mirror extension instead of periodization, as
\[
\sigma_{\scriptsize\mbox{ext}}(x) = \left\{ \begin{array}{ll}
        \spc(1) & \mbox{if $x \leq -1$};\\
        \spc(-x) & \mbox{if $-1 < x \leq 0$};\\
        \spc(x) & \mbox{if $0< x \leq 1$};\\
        \spc(2-x) & \mbox{if $1< x \leq 2$};\\
        \spc(0) & \mbox{if $x \geq 2$}.\end{array} \right.
\]
The reasoning proceeds as previously in this extended medium. The discontinuities of $\sigma_{\scriptsize\mbox{ext}}(x)$ are the points $\{ \tilde{x}_j; -M+1 \leq j \leq 2M \}$ of $[-1,2]$ defined as the proper reindexing of $\{ -x_j \} \cup \{ x_j \} \cup \{ 2 - x_j \}$.

If we define $\tilde{R}_{j,\ell}$ and $\tilde{R}_{j,r}$ as the reflection coefficients at $x = \tilde{x}_j$, $-M+1 \leq j \leq 2M $, then the study of combined amplitudes of reflected bumps involves the quantities
\[
\sum_{i = k}^{k+M-1} \tilde{R}_{i,\ell},        \qquad \mbox{ and } \qquad \sum_{i = k}^{k+M-1} \tilde{R}_{i,r}.
\]
Because the extension is now mirror instead of periodic, each of these sums may involve a given reflection coefficient, $R_{j,\ell}$ or $R_{j,r}$, \emph{twice} within a span of length $M$ of the index $j$. Therefore we can only bound each sum individually by Var$(\log \sigma)$ instead of $\frac{1}{2}$Var$(\log \sigma)$ as previously. The reasoning continues exactly like before, with this loss of a factor 2.

\end{enumerate}

\end{proof}


Let us make a few remarks.
\begin{itemize}

\item For the application to the sparse recovery problem, the weighted $L^1_\sigma$ norm is used instead. In the last steps of the proof we can use the equivalence of $L^1_{\sigma^{3/2}}$ and $L^1_\sigma$ norms to slightly modify the estimate into
\begin{equation}\label{eq:sparse-Strichartz2}
\int_0^1 \sigma(x) |u(x,t) | \, dx \leq 2 \left( \frac{\sigma_{\max}}{\sigma_{\min}} \right)^{1/2} \cdot D^n \cdot \left( \int_0^1 \sigma(x) |u_0(x)| \, dx  + \int_0^1 \sigma^2(x) |U_1(x)| \, dx \right).
\end{equation}

\item The constant in the estimate (\ref{eq:sparse-Strichartz}) depends on time only through the maximum number of rotations around the periodized interval $[0,1]$. That this constant does not tend to one as $t \to 0^{+}$ is the expected behavior, because a single scattering event whereby a bump splits into two or more bumps can happen arbitrarily early.

\item On the other hand we do not known if the condition Var$(\log \sigma) < 1$ (Dirichlet or Neumann), or Var$(\log \sigma) < 2$ (periodic boundary condition) is essential for an $L^1$ estimate to hold. Any proof argument that would attempt at removing a condition of this kind---or explain its relevance---would need to account for the combinatorics of destructive interfererence that occurs in regimes of multiple scattering. This question may offer a clue into localization phenomena.

\item The reader may wonder why we have only included initial conditions and no forcing to the wave equation, as is customary in Strichartz estimates. The presence of an additional forcing $F(x,t)$ in equation (\ref{eq:wave}), however, would spoil sparsity for most choices of $F$. Only well-chosen forcings, properly localized and polarized along bicharacteristics relative to the initial conditions, have any hope of preserving the peaky character of a solution to the wave equation---otherwise energy would be introduced and distributed among too large a set of bicharacteristics.

\item Lastly, an estimate such as (\ref{eq:sparse-Strichartz}) would not generally hold for media that are not of bounded variation. This phenomenon can be illustrated in the small $\eps$ limit of a slab of random acoustic medium with correlation length $O(\eps^2)$, slab width $O(1)$, and impinging pulse width $O(\eps)$. This situation is considered in Chapter 9 of \cite{BookPapanico}, where it is also shown that the intensity of the reflected wave decays like $1/t^2$ in expectation, hence $1/t$ for the wave amplitude. The corresponding picture at fixed $t$ is that of a heavy-tailed wave that decays like $1/x$---hence does not belong to $L^1$.

\end{itemize}

\subsection{Analysis of Incoherence}\label{sec:incoherence}

In this section we prove a result of extension, or incoherence of the eigenfunctions of the operator $ \sigma^{-2}(x) d^2/dx^2$ on the interval $[0,1]$, with Dirichlet ($u(0) = u(1) = 0$) or Neumann ($u'(0) = u'(1) = 0$) boundary conditions. Recall that the natural inner product in this context is
\[
\< f,g \> = \int_0^1 f(x) \overline{g}(x) \sigma^2(x) dx,
\]
with corresponding norm
\[
\| f \|_{L^2_{\sigma^2}} = \sqrt{\< f,f \>}.
\]

\begin{theorem}\label{teo:incoherence}
Let $\log \sigma \in BV([0,1])$, and let $v_\omega(x)$ obey $v_\omega''(x) = - \omega^2 \sigma^2(x) v_\omega(x)$ on $[0,1]$ with Dirichlet or Neumann boundary conditions. Then
\begin{equation}\label{eq:incoherence-continuous}
\| \sigma v_\omega \|_{L^\infty} \leq \sqrt{2}  \, \exp \left( \mbox{\emph{Var}}(\log \sigma) \right) \cdot \| v_\omega \|_{L^2_{\sigma^{2}}}.
\end{equation}
\end{theorem}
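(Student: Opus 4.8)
The plan is to control the quantity $E(x) = (\sigma(x)v_\omega(x))^2 + (\text{something involving }v_\omega')^2$, an energy density adapted to the Helmholtz equation, and show that its logarithmic growth rate is governed by $\mathrm{Var}(\log\sigma)$. First I would introduce the natural ``momentum'' variable $p(x) = v_\omega'(x)/\omega$, so that the second-order ODE $v_\omega'' = -\omega^2\sigma^2 v_\omega$ becomes the first-order system $v_\omega' = \omega p$, $p' = -\omega\sigma^2 v_\omega$. The candidate conserved-up-to-BV quantity is
\[
E(x) = \sigma(x)\,v_\omega(x)^2 \cdot \text{(factor)} + \frac{1}{\sigma(x)}\,p(x)^2,
\]
chosen precisely so that when $\sigma$ is \emph{constant} the combination $\sigma v_\omega^2 + \sigma^{-1} p^2$ has zero derivative (the cross terms $\omega v_\omega p$ cancel). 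I would compute $E'(x)$ and find that it equals $(\sigma'/\sigma)$ times a bounded multiple of $E(x)$ itself, i.e. $|E'(x)| \le |(\log\sigma)'(x)|\,E(x)$ in the sense of measures; Grönwall then gives $E(x) \le E(y)\exp(\mathrm{Var}(\log\sigma))$ for any $x,y$, and in particular $\max E \le \min E \cdot \exp(\mathrm{Var}(\log\sigma))$ — actually one only needs $\max E \le \bar{E}\cdot\exp(\mathrm{Var}(\log\sigma))$ where $\bar E$ is an average.

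Since $\sigma$ is only $BV$ (with jumps allowed), I would first run the argument for a smooth approximant $\sigma_\eps$ using Lemma~\ref{teo:BV}: solve the ODE with $\sigma_\eps$, get the pointwise estimate with $\mathrm{Var}(\log\sigma_\eps)$ in the exponent, then pass to the limit $\eps\to 0$ using continuous dependence of solutions of the ODE on the coefficient in $L^1$ (together with $\mathrm{Var}(\log\sigma_\eps)\to\mathrm{Var}(\log\sigma)$ from point 5 of the Lemma, and the uniform bounds from point 3). Alternatively one can work directly with the $BV$ function and interpret $E'$ as a measure, handling the jump contributions of $\log\sigma$ at each discontinuity separately — at a jump the reflection/transmission relations show $E$ can only be multiplied by a factor controlled by $e^{|\Delta\log\sigma|}$, matching the smooth estimate; but the mollification route is cleaner.

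To finish, I would relate $\max_x \sigma(x)v_\omega(x)^2$ to $\|v_\omega\|_{L^2_{\sigma^2}}^2 = \int_0^1 \sigma^2 v_\omega^2\,dx$. From $E(x)\ge \sigma(x)v_\omega(x)^2$ and $\|\sigma v_\omega\|_{L^\infty}^2 = \max_x \sigma^2 v_\omega^2 \le \sigma_{\max}\cdot\max_x(\sigma v_\omega^2)\le \sigma_{\max}\max_x E(x)$, combined with $\max E \le e^{\mathrm{Var}(\log\sigma)}\,\bar E$ and $\bar E = \int_0^1 E(x)\,dx \ge \int_0^1 \sigma v_\omega^2\,dx \ge \sigma_{\min}^{-1}\int_0^1 \sigma^2 v_\omega^2\,dx$ — wait, this needs care about which weighted averages appear, and this is exactly where the constant $\sqrt 2$ must emerge. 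The key point is that $\int_0^1 E\,dx = \int_0^1(\text{const}\cdot\sigma v_\omega^2 + \sigma^{-1}p^2)\,dx$, and one shows $\int_0^1 \sigma^{-1}p^2\,dx$ is comparable to $\int_0^1 \sigma v_\omega^2\,dx$ (again by an integration-by-parts/energy identity using the boundary conditions, which kill the boundary terms), so that $\int_0^1 E\,dx \le 2\int_0^1 \sigma v_\omega^2 \,dx$ after the right normalization — the factor $2$ being the source of the $\sqrt 2$. The main obstacle I anticipate is precisely this bookkeeping: choosing the correct weight in the definition of $E$ so that (a) the cross terms cancel for constant $\sigma$, (b) $E$ dominates $\sigma v_\omega^2$ pointwise, and (c) $\int E$ is within a factor $2$ of $\|v_\omega\|_{L^2_{\sigma^2}}^2$ using only the Dirichlet/Neumann boundary conditions; the Grönwall step and the mollification step are routine by comparison.
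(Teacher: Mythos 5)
Your strategy is the paper's: build a quadratic energy in $(v_\omega, v_\omega'/\omega)$ whose cross terms cancel, show its logarithmic derivative is controlled by $|(\log\sigma)'|$, apply Gr\"onwall to get a two-sided comparison with $\exp(\mathrm{Var}(\log\sigma))$, handle the $BV$ coefficient by mollification (Lemma~\ref{teo:BV}), and use an integration-by-parts identity from the Dirichlet/Neumann boundary conditions to convert $\int_0^1 E$ into $\|v_\omega\|_{L^2_{\sigma^2}}^2$ with the factor $2$ that becomes the $\sqrt 2$. You also correctly enumerate the three properties (a)--(c) the energy must satisfy. The genuine gap is at the step you yourself flag as unresolved: the weight you commit to, $E=\sigma v_\omega^2+\sigma^{-1}(v_\omega'/\omega)^2$, satisfies (a) but fails (b) and (c). It dominates $\sigma v_\omega^2$ rather than $\sigma^2 v_\omega^2=|\sigma v_\omega|^2$, and the identity $\int_0^1 |v_\omega'|^2/\omega^2=\int_0^1\sigma^2 v_\omega^2$ (multiply the equation by $v_\omega$ and integrate by parts; the boundary terms vanish) holds for the \emph{unweighted} integral of $|v_\omega'|^2$, not for $\int\sigma^{-1}|v_\omega'|^2$. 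As written, your argument therefore produces stray contrast factors $\sigma_{\max}/\sigma_{\min}$ in both the sup bound and the integral bound, and does not deliver the stated constant.

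The fix is to take the weight the paper uses (it writes $I=v_\omega^2+|v_\omega'|^2/(\omega^2\sigma^2)$ and works with $\sigma^2 I$), namely
\begin{equation*}
E(x)=\sigma^2(x)\,v_\omega^2(x)+\frac{|v_\omega'(x)|^2}{\omega^2},
\end{equation*}
which is the unique choice (up to a constant multiple) meeting all three of your criteria: (a) $E'=2(\log\sigma)'\,\sigma^2 v_\omega^2$, so $|E'|\le 2|(\log\sigma)'|\,E$ and Gr\"onwall gives $\max E\le e^{2\mathrm{Var}(\log\sigma)}E(x)$ for every $x$; (b) $E\ge \sigma^2 v_\omega^2$ trivially; (c) $\int_0^1 E=2\int_0^1\sigma^2 v_\omega^2$ exactly. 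Combining these yields $\|\sigma v_\omega\|_\infty^2\le 2\,e^{2\mathrm{Var}(\log\sigma)}\|v_\omega\|_{L^2_{\sigma^2}}^2$, i.e.\ the theorem. (Your weight can in fact be salvaged, since your differential inequality only costs $e^{\mathrm{Var}}$ rather than $e^{2\mathrm{Var}}$, but only by adding the observation $\sigma_{\max}/\sigma_{\min}\le e^{\mathrm{Var}(\log\sigma)}$ to absorb the contrast factors --- a step you do not take.) Your mollification plan (regularize $\sigma$ in the ODE and invoke continuous dependence) differs mildly from the paper's (keep $v_\omega$ fixed, regularize $\sigma$ only inside $I$, and use an inhomogeneous Gr\"onwall inequality); either route works.
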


The point of this result is that the quantity $\exp \left( \mbox{{Var}}(\log \sigma) \right)$ does not depend on $\omega$.

\begin{proof}
Let us first discuss existence and smoothness of $v_\omega(x)$ in $[0,1]$. The operator $\sigma^{-2}(x) \frac{d^2}{dx^2}$  with Dirichlet or Neumann boundary conditions is not only self-adjoint but also negative semi-definite with respect to the weighted inner product $\< \cdot, \cdot \>_{L^2_{\sigma^2}}$. Hence by spectral theory there exists a sequence of eigenvalues $0 \leq \lambda_1 < \lambda_2 < \ldots$ and corresponding eigenvectors in $L^2_{\sigma^2}$, orthonormal for the same inner product. (Basic material on Sturm-Liouville equations and spectral theory in Hilbert spaces can be found in \cite{DL}.) We denote a generic eigenvalue as $\lambda = \omega^2$, and write
\[
v_\omega''(x) = - \omega^2 \sigma^2(x) v_\omega(x).
\]
This equation in turn implies that $v''_\omega \in L^2_{\sigma^2}(0,1)$, hence also belongs to $L^2(0,1)$, i.e. $v_\omega$ is in the Sobolev space $H^2(0,1)$. Iterating this regularity argument one more time, we can further conclude that $v_\omega$ is in the space of functions whose second derivative is in $BV$.

Let us now fix $\omega$, remove it as a subscript for notational convenience, and consider the quantity
\[
I(x) = |v(x)|^2 + \frac{|v'(x)|^2}{\omega^2 \sigma^2(x)}.
\]
Take $\sigma \in C^1$ for the time being, and notice that a few terms cancel out in the expression of $I'(x)$;
\[
I'(x) = -2 (\log \sigma(x))' \frac{|v'(x)|^2}{\omega^2 \sigma^2(x)}.
\]
We can now bound
\[
I'(x) \geq - 2 |(\log \sigma(x))' | \, I(x),
\]
and use Gronwall's inequality to obtain a useful intermediate result on the decay of $I(x)$,
\begin{equation}\label{eq:interm-decay}
I(x) \geq I(0) \exp \left( - 2 \int_0^x | (\log \sigma(y))' | \, dy \right).
\end{equation}

In general, $\sigma(x)$ is only of bounded variation, but the inequality (\ref{eq:interm-decay}) remains true if written as
\begin{equation}\label{eq:interm-decayBV}
I(x) \geq I(0) \exp \left( - 2 \mbox{Var}_x(\log \sigma) \right),
\end{equation}
(In fact, a slitghly stronger result with the positive and negative variations of $\log(\sigma)$ holds.) For conciseness we justify this result in the Appendix.

The quantity $\sigma^2(x) I(x)$ is a continous function over $[0,1]$, therefore absolutely continuous, and reaches its maximum at some point $x^*$. No special role is played by the origin in the estimate (\ref{eq:interm-decayBV}), hence, for all $x \in [0,1]$, we have
\begin{equation*}
\max_{[0,1]} | \sigma(x) v(x)|^2 \leq \sigma^2(x^*) I(x^*) \leq \exp \left( 2 \mbox{Var}(\log \sigma) \right) \sigma^2(x) I(x).
\end{equation*}
Integrate over $[0,1]$;
\[
\| \sigma v \|_\infty^2 \leq \exp \left( 2 \mbox{Var}(\log \sigma) \right) \| I \|^2_{L^2_{\sigma^2}}.
\]
Now
\[
\int_0^1 \sigma^2(x) I(x) \, dx = \int_0^1 \sigma^2(x) |v(x)|^2 \, dx + \int_0^1 \frac{|v'(x)|^2}{\omega^2} \, dx.
\]
It is easy to verify that both terms in the right-hand side of the above equation are in fact equal to each other, by multiplying the equation $v'' + \omega^2 \sigma^2 v = 0$ with $v$, integrating by parts over $[0,1]$, and using the boundary conditions. Therefore
\[
\| \sigma v \|_\infty^2 \leq 2 \, \exp \left( 2 \mbox{Var}(\log \sigma) \right) \| v \|^2_{L^2_{\sigma^2}},
\]
which is the desired result.

%

\end{proof}

A few remarks on related work and extensions are in order.

\begin{itemize}

\item The argument can be slightly modified to obtain instead
\[
\| v_\omega \|_\infty \leq \sqrt{2} \exp \left( \mbox{Var}(\log \sigma) \right)  \frac{\| v_\omega \|_{L^2_{\sigma^2}}}{\| 1 \|_{L^2_{\sigma^2}}}.
\]

\item The quantity $I(x)$ that appears in the proof is, morally, the time-harmonic counterpart of the sideways energy $E(x) = \int_0^T \left[ \sigma^2(x)|\frac{\pd u}{\pd t}|^2 + |\frac{\pd u}{\pd x}|^2 \right] dt$, where $u$ would now solve (\ref{eq:wave2}). Sideways refers to the fact that integration is carried out in $t$ instead of $x$. This trick of interchanging $t$ and $x$ while keeping the nature of the equation unchanged is only available in one spatial dimension. It was recognized in the 1980s by W. Symes that ``sideways energy estimates" allowed to prove transparency of waves in one-dimensional BV media \cite{Sym, LS}, a result very close in spirit to the eigenfunction result presented here. Independently, E. Zuazua \cite{Zua1}, as well as F. Conrad, J. Leblond, and J.P. Marmorat \cite{CLM}, used similar techniques for proving controllability and observability results for waves in one-dimensional BV media. See \cite{Zua2} for a nice review.

\item Theorem \ref{teo:incoherence} is sharp in the sense that for each $0 < s < 1$, there exists a medium $\sigma(x)$ in the H\"older space $C^s([0,1])$ for which there exists a sequence of eigenfunctions exponentially and arbitrarily localized around, say, the origin. Notice the embedding $C^1 \subset BV$, but $C^s \subsetneq BV$ for $s < 1$. The construction is due to C. Castro and E. Zuazua, see \cite{CZ}. In our setting, it means that (\ref{eq:incoherence-continuous}) cannot hold for such $\sigma(x)$, because the constant in the right-hand side would have to depend on $\omega$.

\item Related results in dimension two and higher, using local norms on manifolds with metric of limited differentiability, e.g. $C^{1,1}$, can be found in \cite{Smith-egf, Sogge-egf, JMS}. Interestingly, the constant in front of the $L^2_{\mbox{\scriptsize loc}}$ norm in general grows like a fractional power law in $\lambda$, allowing the possibility of somewhat localized eigenfunctions in dimensions greater than two, typically near the boundaries of the domain.

\item Physically, one may relate the total variation of $\log \sigma$ to a notion of \emph{localization length} $L$, for instance as the largest $x$ such that Var${}_x (\log \sigma)$ is less than some prescribed constant $C$. Then $L$ dictates the decay rate of eigenfunctions, much in the spirit of Lyapunov exponents for the study of localization for ergodic Schr\"{o}dinger operators.

\end{itemize}

\subsection{Analysis of Eigenvalue Gaps}\label{sec:egv-gaps}

This section contains the eigenvalue gap result. Notice that it is the \emph{square root} of the eigenvalues which obey a uniform gap estimate.

\begin{theorem}\label{teo:egv-gaps}
Let $\log \sigma \in BV([0,1])$ with $\mbox{\emph{Var}}(\log \sigma) < \pi$. Let $\lambda_j = - \omega^2_j$, $j = 1,2$, be two distinct eigenvalues of $\sigma^{-2}(x) \, d^2/dx^2$ on $[0,1]$ with Dirichlet or Neumann boundary conditions. Then
\begin{equation}\label{eq:egv-gaps}
\frac{\pi - \mbox{\emph{Var}}(\log \sigma)}{\int_0^1 \sigma(x) \, dx} \leq |\omega_1 - \omega_2| \leq \frac{\pi + \mbox{\emph{Var}}(\log \sigma)}{\int_0^1 \sigma(x) \, dx}.
\end{equation}
\end{theorem}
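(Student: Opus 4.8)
The natural approach is via the Prüfer substitution, which turns the second-order Sturm–Liouville equation $v'' = -\omega^2 \sigma^2 v$ into a first-order equation for an angle variable. Writing $v = r \sin\theta$ and $v' = \omega\sigma\, r \cos\theta$ (a rescaled Prüfer transformation adapted to the weight $\sigma$), one computes that the amplitude $r$ decouples and the phase satisfies an ODE of the form
\[
\theta'(x) = \omega \sigma(x) - (\log\sigma(x))' \sin\theta\cos\theta.
\]
The boundary conditions (Dirichlet or Neumann) pin down $\theta$ at $x=0$ and force $\theta(1) - \theta(0)$ to be an integer multiple of $\pi$; the $n$-th eigenfunction corresponds to a total phase increment of exactly $n\pi$. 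So the eigenvalue condition reads $\theta_\omega(1) = \theta(0) + n\pi$, and two eigenvalues $\omega_1 < \omega_2$ are consecutive precisely when $\theta_{\omega_2}(1) - \theta_{\omega_1}(1) = \pi$ (more generally $|\omega_1-\omega_2|$ corresponds to a phase difference that is a multiple of $\pi$, so it suffices to bound the phase difference for the consecutive case).

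The key estimate is then to integrate the phase ODE and compare $\theta_{\omega_2}(1)$ with $\theta_{\omega_1}(1)$. Integrating, $\theta_\omega(1) - \theta(0) = \omega \int_0^1 \sigma(x)\,dx - \int_0^1 (\log\sigma)' \sin\theta_\omega\cos\theta_\omega\,dx$. The first term is linear in $\omega$ with slope $\int_0^1\sigma$; the second is a perturbation bounded in absolute value by $\tfrac12\int_0^1 |(\log\sigma)'| \le \tfrac12\mathrm{Var}(\log\sigma)$, since $|\sin\theta\cos\theta|\le\tfrac12$. Taking the difference of this identity at $\omega_2$ and $\omega_1$ and using that the phase difference equals $\pi$ gives
\[
\pi = (\omega_2-\omega_1)\int_0^1 \sigma(x)\,dx - \int_0^1 (\log\sigma)' \big(\sin\theta_{\omega_2}\cos\theta_{\omega_2} - \sin\theta_{\omega_1}\cos\theta_{\omega_1}\big)\,dx,
\]
and bounding the last integral by $\mathrm{Var}(\log\sigma)$ (each of the two oscillatory terms contributes at most $\tfrac12\mathrm{Var}$) yields
\[
\frac{\pi - \mathrm{Var}(\log\sigma)}{\int_0^1\sigma} \le \omega_2 - \omega_1 \le \frac{\pi + \mathrm{Var}(\log\sigma)}{\int_0^1\sigma},
\]
which is exactly \eqref{eq:egv-gaps}. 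The condition $\mathrm{Var}(\log\sigma) < \pi$ is what guarantees the lower bound is positive, hence that consecutive eigenvalues really are the closest pair and the estimate is nonvacuous.

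Two technical points need care. First, the BV regularity: $\log\sigma$ is only of bounded variation, so $(\log\sigma)'$ is a signed measure and the phase ODE should be interpreted in integral form, $\theta_\omega(x) = \theta(0) + \omega\int_0^x\sigma - \int_0^x \sin\theta_\omega\cos\theta_\omega\,d(\log\sigma)$; one handles this by the mollification Lemma \ref{teo:BV}, proving the estimate for smooth $\sigma_\eps$ and passing to the limit using $\mathrm{Var}(\log\sigma_\eps)\to\mathrm{Var}(\log\sigma)$ and continuous dependence of eigenvalues on the coefficient. Second, and this is the main obstacle, one must verify that the Prüfer phase is genuinely monotone enough that "phase increment $=n\pi$" correctly enumerates the eigenvalues and that consecutive eigenvalues correspond to phase difference exactly $\pi$ — i.e. that the correction term cannot cause the total phase to backtrack across a multiple of $\pi$. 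This follows because at the zeros of $v$ (where $\sin\theta=0$) the correction term vanishes and $\theta' = \omega\sigma > 0$, so $\theta$ increases through each multiple of $\pi$ transversally; making this rigorous in the BV setting is the delicate part, but it is again reachable by the mollification argument. I would also need to check the Neumann case separately, where the phase at the endpoints is offset by $\pi/2$, but the counting and the estimate go through identically.
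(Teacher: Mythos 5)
Your proposal follows essentially the same route as the paper's proof: the rescaled Pr\"ufer substitution $v = r\sin\theta$, $v' = \omega\sigma\, r\cos\theta$, integration of the resulting phase ODE, the bound $|\sin\theta\cos\theta|\le \tfrac12$ against $\mbox{Var}(\log\sigma)$, and mollification via Lemma \ref{teo:BV} to handle the BV coefficient. Two minor remarks: the correction term in the phase equation should read $+(\log\sigma)'\sin\theta\cos\theta$ rather than $-$ (immaterial for the estimate, since only its absolute value is used), and where you invoke transversality of $\theta$ at the zeros of $v$ to justify the phase-counting, the paper instead establishes the bijection between eigenvalues and phase increments by showing that $\theta_\omega(1)$ is strictly increasing in $\omega$ (differentiating the ODE in $\omega$ and applying Duhamel's formula), which is the cleaner way to guarantee that distinct eigenvalues differ by at least $\pi$ in phase and consecutive ones by exactly $\pi$.
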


\begin{proof}
Consider a generic eigenvalue $\lambda = - \omega^2$ with eigenfunction $u(x)$, and assume that $\sigma \in C^1([0,1])$. This restriction will be lifted by a proper limiting argument.

The quantity $I(x)$ introduced in the proof of Theorem \ref{teo:incoherence}, should be seen as the square of the radius $r(x)$, in a polar decomposition
\[
u'(x) = \omega \sigma(x) r(x) \cos \theta(x), \qquad u(x) = r(x) \sin \theta(x).
\]
If $u' \ne 0$, then $\tan \theta = \omega \sigma \frac{u}{u'}$; and if $u = 0$, then $\cot \theta = \frac{1}{\omega \sigma} \frac{u'}{u}$. (We have already seen that $u$ and $u'$ cannot simultaneously vanish since $r^2(x) = I(x) > 0$ everywhere.) From either of those relations one readily obtains
\begin{equation}\label{eq:thetaprime}
\theta'(x) = (\log \sigma(x))' \, \sin \theta(x) \cos \theta(x) + \omega \sigma(x).
\end{equation}
It is interesting to notice that the radius $r(x)$ does not feed back into this first-order equation for $\theta(x)$. Note also that the second term in the above equation quickly dominates as $\omega \to \infty$; if the nonlinear term is neglected we get back the WKB approximation. The boundary conditions on $\theta$ are:
\begin{align*}
&\mbox{Dirichlet: }  &\theta(0) &= m \pi,  &\theta(1) &= n \pi; \\
&\mbox{Neumann: } &\theta(0) &= \frac{\pi}{2} + m \pi,  &\theta(1) &= \frac{\pi}{2} + n \pi.
\end{align*}
where $m$ and $n$ are arbitrary integers. Without loss of generality, set $m = 0$. There is also a symmetry under sign reversal of both $\theta$ and $\omega$, so we may restrict $n \geq 0$.

Equation (\ref{eq:thetaprime}) with fixed $\theta(0)$ is an evolution problem whose solution is unique and depends continuously on $\omega$. Moreover, the solution is strictly increasing in $\omega$, for every $x$, as can be shown from differentiating (\ref{eq:thetaprime}) in $\omega$ and solving for $d \theta/d\omega$ using Duhamel's formula.

The successive values of $\omega$ that correspond to eigenvalues $\lambda = -\omega^2$ are therefore determined by the (quantization) condition that $\theta(1) = n \pi$ for some $n > 0$ (Dirichlet), or $\theta(1) = \pi/2 + n \pi$ for some $n \geq 0$ (Neumann). By monotonicity of $\theta(1)$ in $\omega$, there is in fact a bijective correspondence between $n$ and $\omega$.

As a result, two distinct eigenvalues $\lambda_j = - \omega^2_j$, $j = 1,2$, necessarily correspond to a phase shift of at least $\pi$ at $x = 1$. Set $\theta_j(x)$ for the corresponding phases. Then, by (\ref{eq:thetaprime}),
\[
(\theta_1 - \theta_2)'(x) = (\log \sigma)'(x) [\sin \theta_1 \cos \theta_1 - \sin \theta_2 \cos \theta_2] + (\omega_1 - \omega_2) \sigma(x).
\]
Integrate in $x$ and use the boundary conditions to find
\[
n \pi = \int_0^1 (\log \sigma)'(x) [\sin \theta_1 \cos \theta_1 - \sin \theta_2 \cos \theta_2] dx + (\omega_1 - \omega_2) \int_0^1 \sigma(x) dx, \qquad n \ne 0.
\]
The factor in square brackets is bounded by 1 in magnitude, therefore
\[
|\omega_1 - \omega_2| \geq \frac{1}{\int_0^1 \sigma(x) dx}(\pi - \int_0^1 |(\log \sigma)'(x)| dx),
\]
and also
\[
|\omega_1 - \omega_2| \leq \frac{1}{\int_0^1 \sigma(x) dx}(\pi + \int_0^1 |(\log \sigma)'(x)| dx),
\]
This proves the theorem in the case when $\sigma \in C^1([0,1])$. A standard limiting argument shows that the properly modified conclusion holds when $\sigma \in BV([0,1])$; we leave this justification to the Appendix.

\end{proof}

A few remarks:

\begin{itemize}


\item The polar decomposition used in the proof of Theorem \ref{teo:egv-gaps} is a variant of the so-called Pr\"{u}fer transformation \cite{Atk}, which is more often written as $u'(x) = r(x) \cos \theta(x)$, $u(x) = r(x) \sin \theta(x)$. This simpler form does not appear to be appropriate in our context, however. Notice that such polar decompositions are a central tool in the study of waves in random media \cite{BookPapanico}.

\item It is perhaps interesting to notice that Var$(\log \sigma) < \pi$ is a sufficient condition identified by Atkinson in \cite{Atk} for the convergence of the Bremmer series for the Sturm-Liouville problem.

\end{itemize}


\newcommand{\tom}{\tilde\om}
\newcommand{\tOm}{\tilde\Om}

\section{Algorithms}
\label{sec:implementation}

This section discusses some of the finer points of the implementation. Basic discretization issues were exposed in Section \ref{sec:disc}.




\subsection{Extraction of the Eigenvectors}
\label{subsec-extraction-eigen}

The first part of the algorithm consists in extracting a random set of eigenvectors $\{ \tilde v_{\tom} \}_{\tom}$ of the discretized operator $\Ll = \Si^{-2} L$ where $\Si = \diag_j( \sigmaj )$.  The discretized Laplacian $L$ over $\RR$ is computed spectrally 
\eq{
	\widehat{L v}[m] = - 4 \pi^2 m^2 \hat v[m] 
}
where $m \in \{ -N/2+1,\ldots,N/2 \}$ indexes the frequencies of the discrete Fourier transform. Fourier transforms are computed in $O(N \log N)$ operation with the FFT.



Since we are interested in extracting only a few eigenvectors chosen at random, we use an iterative method \cite{templates-eigen} that parallelizes trivially on multiple processors. Each processor computes and stores independently from the others a few eigenvectors using an iterative method. 


The simplest way to compute an eigenvector $\tilde v_{\tom}$ whose eigenvalue $-\tom^2$ is closest to a given $-\tom_0^2$ is to compute iterative inverse powers
\eq{
	\tilde v_{\tom}^{(k+1)} = (\Ll + \tom_0^2 \Id)^{-1} \tilde v_{\tom}^{(k)},
}
with an adequate starting guess $\tilde v_{\tom}^{(0)}$, typically white noise. In practice we use a variant of this power iteration called the restarted Arnoldi method, and coded in Matlab's \texttt{eigs} command.
At each iteration we approximately solve the linear system $(\Ll + \tom_0^2 \Id) \tilde v_{\tom}^{(k+1)} = \tilde v_{\tom}^{(k)}$ with a few steps of stabilized bi-conjugate gradient \cite{templates-linsyst}. Recent work \cite{ErlNab} suggests that a shift in the reverse direction $\Ll - \tom_0^2 \Id$ or a complex shift $\Ll + \imath\tom_0^2 \Id$ are good preconditionners for this linear system resolution. Such preconditoners are applied efficiently using multigrid, or alternatively and as used in this paper, using discrete symbol calculus. In this framework, it is the whole symbol of the operator $(\Ll - \tom_0^2 \Id)^{-1}$ which is precomputed in compressed form, and then applied iteratively to functions on demand. The resulting preconditioners are quite competitive. See \cite{DSC} for more information.


Each shift $\tom_0^2$ should be chosen according to an estimate of the true (but unknown) eigenvalues repartition to sample as uniformly as possible the set of eigenvectors of $\Ll$. The eigenvalues of the discrete Laplacian in a constant medium $\sigmaj = \si_0$ are $\{ - \om_{\max}^2 (2 m/N)^2 \}_{m=-N/2+1}^{N/2}$ where $\om_{\max}^2 = \pi^2 N^2 / \si_0^2$. Treating the general case as a perturbation of this constant setting leads draw $\tom_0$ uniformly at random in $[0,\om_{\max}]$ where $\om_{\max}$ defined as the maximum eigenvalue of $\Ll$. The value of $\om_{\max}$ is readily available and computed using power iterations on $\Ll$. We have seen in Section \ref{sec:egv-gaps} that the departure from uniformity is under control when the medium has a reasonable total variation. We also explained that the sampling should be without replacement: in practice the implementation of ``replacement" carefully accounts for the multiplicity two of each eigenspace in the case of periodic boundary conditions.

\subsection{Iterative Thresholding for $\lun$ Minimization}

At the core of the compressive wave computation algorithm is the resolution of the optimization problem \eqref{eq:ell1} involving the $\lun$ norm. We introduce the operator $\Phi : \RR^{N} \mapsto \RR^{\Om}$ such that 
\eq{
	\Phi u[\tom] = \sum_j u[j] \tilde v_{\tom}[j]
}
where $\tom \in \Om$ indexes $K = |\Om|$ eigenvectors $\{ \tilde v_{\tom} \}_{\tom}$ of the discretized Laplacian and $\Si = \diag_j(\sigmaj)$. 
Discarding the time dependency, the $\lun$ optimization \eqref{eq:ell1} is re-written in Lagrangian form as
\begin{equation}\label{eq-lagrangian-optim}
	\underset{u}{\min} \;
	\frac{1}{2} \norm{ \Phi \Si^2 u - \tilde c }^2
	+ \la \sum_j \sigmaj |u[j]|.
\end{equation}
The Lagrangian parameter $\la$ should be set so that $\norm{ \Phi \Si^2 u - \tilde c } \leq \epsilon$.


As described in Section \ref{sec:disc}, $\epsilon$ account for the discretization error, and it can also reflects errors in computation of the eigenvectors. This quantity can be difficult to estimate precisely, and it can be slightly over-estimated, which increases the sparsity of the computed approximation. 


Iterative algorithms solves the minimization \eqref{eq-lagrangian-optim} by sequentially applying a gradient descent step to minimize $\norm{ \Phi \Si^2 u - \tilde c }$ and a soft thresholding to impose that the solution has a low weighted $\lun$ norm $\sum_j \sigmaj |u[j]|$. This algorithm was proposed independently by several researcher, see for instance \cite{daubechies-iterated,combettes-proximal,figueiredo-nowak-em}, and its convergence is proved in  \cite{daubechies-iterated,combettes-proximal}. 

The steps of the algorithm are detailed in Table \ref{listing-it-thresh}. They correspond to the application of the iterative thresholding algorithm to compute the iterates $\Si u^{(k)}$ with the measurement matrix $\Phi \Si$. Since this matrix satisfies $\norm{\Phi\Si u} \leq \norm{u}$ by Plancherel, these iterates converge to a minimizer of \eqref{eq-lagrangian-optim}.

Since the correspondence between $\epsilon$ and $\la$ is a priori unknown, $\la$ is modified iteratively at step 4 of the algorithm so that the residual error converges to $\epsilon$, as detailed in \cite{chambolle-algo-tv}.


An important feature of the iterative algorithm detailed in Table \ref{listing-it-thresh} is that it parallelizes nicely on clusters where the set of eigenvectors $\{ v_{\tom} \}_{\tom \in \tOm}$ are distributed among several nodes. In this case, the transposed operator $\Phi^*$ is pre-computed on the set of nodes, and the application of $\Phi\Si^2$ and $\Si^2 \Phi^*$ is done in parallel during the iterations.


The iterative thresholding algorithm presented in Table \ref{listing-it-thresh} might not be the fastest way to solve \eqref{eq-lagrangian-optim}. Recent contributions to sparse optimization include for instance primal-dual schemes \cite{zhu-primal-dual}, gradient pursuit \cite{blumensath-grad-pursuit}, gradient projection \cite{figueiredo-grad-projection}, fixed point continuation \cite{hale-fixed-point-cont}, gradient methods \cite{nesterov-smooth}, Bregman iterations \cite{yin-bregman} and greedy pursuits \cite{needell-cosamp}. These methods could potentially improve the speed of our algorithm, although it is still unclear which method should be preferred in practice. 


Another avenue for improvement is the replacement of the $\lun$ norm by non-convex functionals that favor more strongly the sparsity of the solution. Non-convex optimization methods such as FOCUSS \cite{gorodnitsky-focuss}, re-weighted $\lun$ \cite{candes-reweighted-l1} or morphological component analysis with hard thresholding \cite{starck-mca} can lead to a sub-optimal local minimum, but seem to improve over $\lun$ minimization in some practical situations.

\begin{listing}
\begin{enumerate}
	\item \textit{Initialization:} set $u^{(0)} = 0$ and $k=0$. 
	\item \textit{Update of the solution:} compute a step of descent of $\norm{ \Phi \Si^2 u - \tilde c }^2$
		\eq{
			\bar u^{(k)} = u^{(k)} + \Phi^* \pa{ \tilde c - \Phi \Si^2 u^{(k)} },
		}
	\item \textit{Minimize $\lun$ norm:} threshold the current update
		\eq{
			\foralls j, \quad u^{(k+1)}[j] = S_{\la / \sigmaj}( \bar u^{(k)}[j] ),
		}
		where the soft thresholding operator is defined as
		\eq{
			S_{\la}(\al) = \choice{
				0 \qifq |\al|<\la,\\
				\al - \sign(\al)\la \quad \text{otherwise}.
			}
		}
	\item \textit{Update the Lagrange multiplier:} set 
	\eq{
		\la \leftarrow \la \frac{\epsilon}{\norm{ \Phi \Si^2 u - \tilde c }}
	}
	\item \textit{Stop:} while $\norm{u^{(k+1)}-u^{(k)}} >$ tol, set $k \leftarrow k+1$ and go back to 2.
\end{enumerate}\vspace{-3mm}
    \caption{Iterative thresholding algorithm to solve \eqref{eq-lagrangian-optim}. \label{listing-it-thresh}}
\end{listing}

\subsection{Sparsity Enhancement}\label{sec:sparsity-enhancement}

The success of the compressive method for wave propagation is directly linked to the sparsity of the initial conditions $u_0$ and $u_1$. To enhance the performance for a fixed set of eigenvectors, the initial data can be decomposed as $u_0 = \sum_{\ell=0}^{\ell-1} u_0^{k}$ where each of the $L$ components $\{u_0^\ell\}_\ell$ is sufficiently sparse, and similarly for $u_1$. The algorithm is then performed $L$ times with each initial condition $u_0^\ell$ and the solution is then recomposed by linearity. It would be interesting to quantify the slight loss in the probability of success since $L$ simulations are now required to be performed accurately, using the same set of eigenvectors.

Since the solution might become less sparse with time $t$ increasing, one can also split the time domain into intervals $[0,t] = \bigcup_i [t_i,t_{i+1}]$, over each of which the loss of sparsity is under control. The algorithm is restarted over each interval $[t_i,t_{i+1}]$ using a decomposition of the wavefields at time $t_i$ into a well chosen number $L = L_{t_i}$ of components to generate sparse new initial conditions. 

\section{Numerical Experiments}

\newcommand{\Err}{\text{\upshape Err}}

\subsection{Compressive Propagation Experiments}

We perform simulations on a 1D grid of $N=2048$ points, with impedance $\si(x)$ of various smoothness and contrast $\si_{\max} / \si_{\min}$. The result of the compressive wave computation is an approximate discrete solution $\{\tilde u[j](t) \}_{j=0}^{N-1}$ at a fixed time step $t$ of the exact solution $\{u[j](t)\}_j$ of the discretized wave equation.

The performance of the algorithm is evaluated using the $\ldeux$ recovery error in space and at several time steps $t_i = T i/n_t$ for $i=0,\ldots,n_t-1$ uniformly distributed in  $[0,T]$, where $n_t=100$. The final time is evaluated such that $\int_0^T \si = 1$, so that the initial spike at $t=0$ propagates over the whole domain. This error is averaged among a large number of random sets $\Om \in \Om_K$ of $K$ eigenvectors
\eql{\label{eq-error-measure}
	\Err(\si,K/N)^2 = \frac{1}{N n_t  |\Om_K| \, \norm{u}} \sum_{\Om \in \Om_K} \sum_{i=0}^{n_t-1} \sum_{j=0}^{N-1} | u[j](t_i) - \tilde u[j](t_i) |^2.
}
Each set $\Om \in \Om_K$ is drawn at random using the procedure described in Section \ref{subsec-extraction-eigen}.

This error depends on the sub-sampling factor $K/N$, where $K=|\Om|$ is the number of computed eigenvectors, and on the impedance $\si$ of the medium. Numerical evaluation of the decay of $\Err$ with $K$ are performed for two toy models of acoustic media that could be relevant in seismic settings: smooth $\si$ with an increasing number of oscillations, and piecewise smooth $\si$ with an increasing number of discontinuities. For each test, the initial condition $u_0$ is a narrow gaussian bump of standard deviation $7/N$. 

\paragraph{Smooth oscillating medium.}

A uniformly smooth impedance $\si_\ga$ parameterized by the number of oscillations $\ga \in [1,20]$ is defined as
\eql{\label{eq-smooth-medium}
	\sigma_\ga[j] = \frac{\si_{\max}+1}{2} + \frac{\si_{\max}-1}{2} \pa{ \sin(2\pi \ga j/N) + 3 }.
}
The contrast $\si_{\max}/\si_{\min}=\si_{\max} \in [1,10]$ is also increased linearly with the complexity $\ga$ of the medium, according to $1+\frac{9}{19}(\gamma-1)$.



Figure \ref{fig-sin-all-error} shows how the recovery error $\Err(\si_{\ga},K/N)$ scales with complexity $\ga$ of the medium and the number of eigenvectors $K$. For media with moderate complexity, one can compute an accurate solution with $N/10$ to $N/5$ eigenvectors.

\myfigure{
	\begin{tabular}{ccc}
		\includegraphics[width=.33\linewidth]{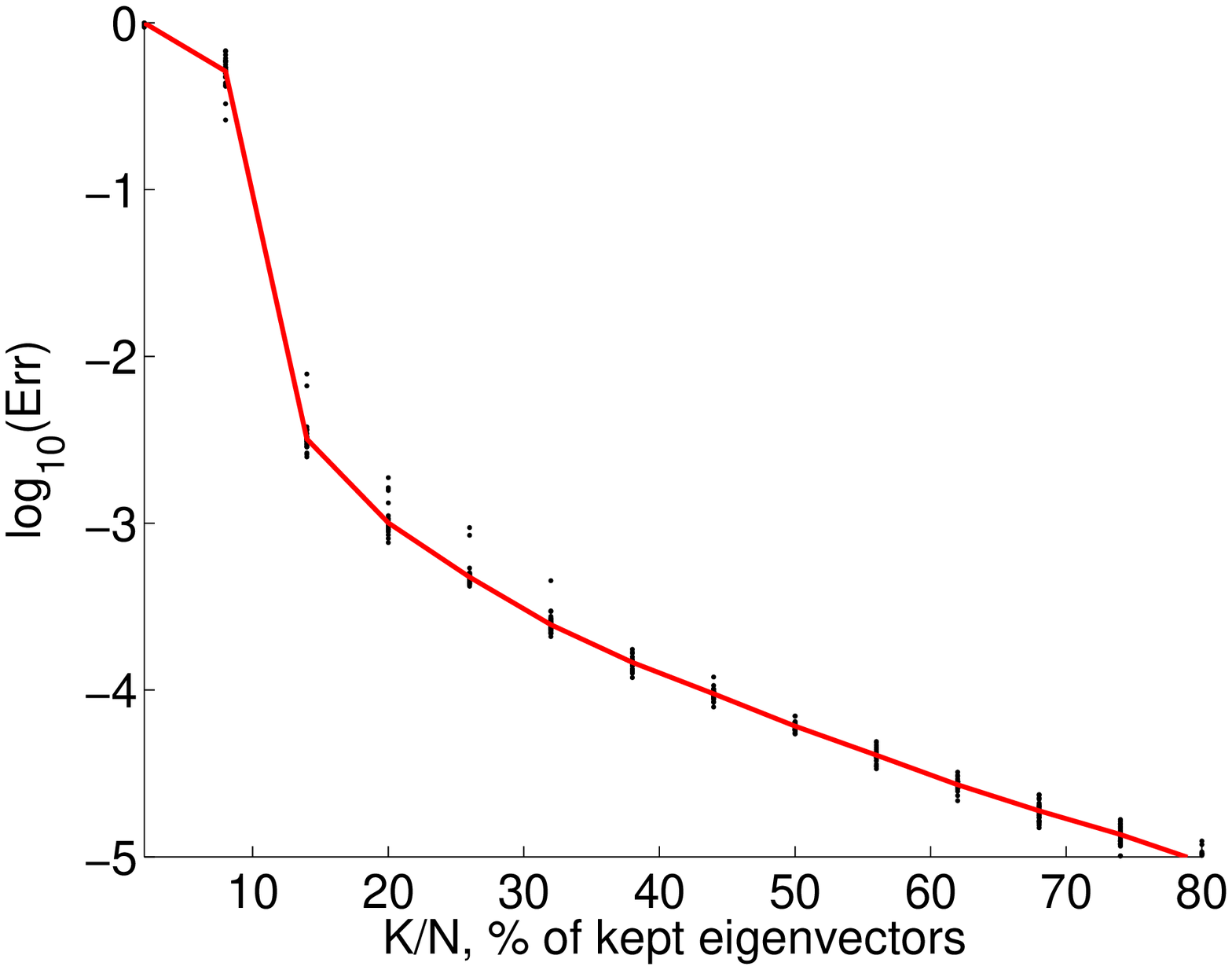}&\hspace{-6mm}
		\includegraphics[width=.33\linewidth]{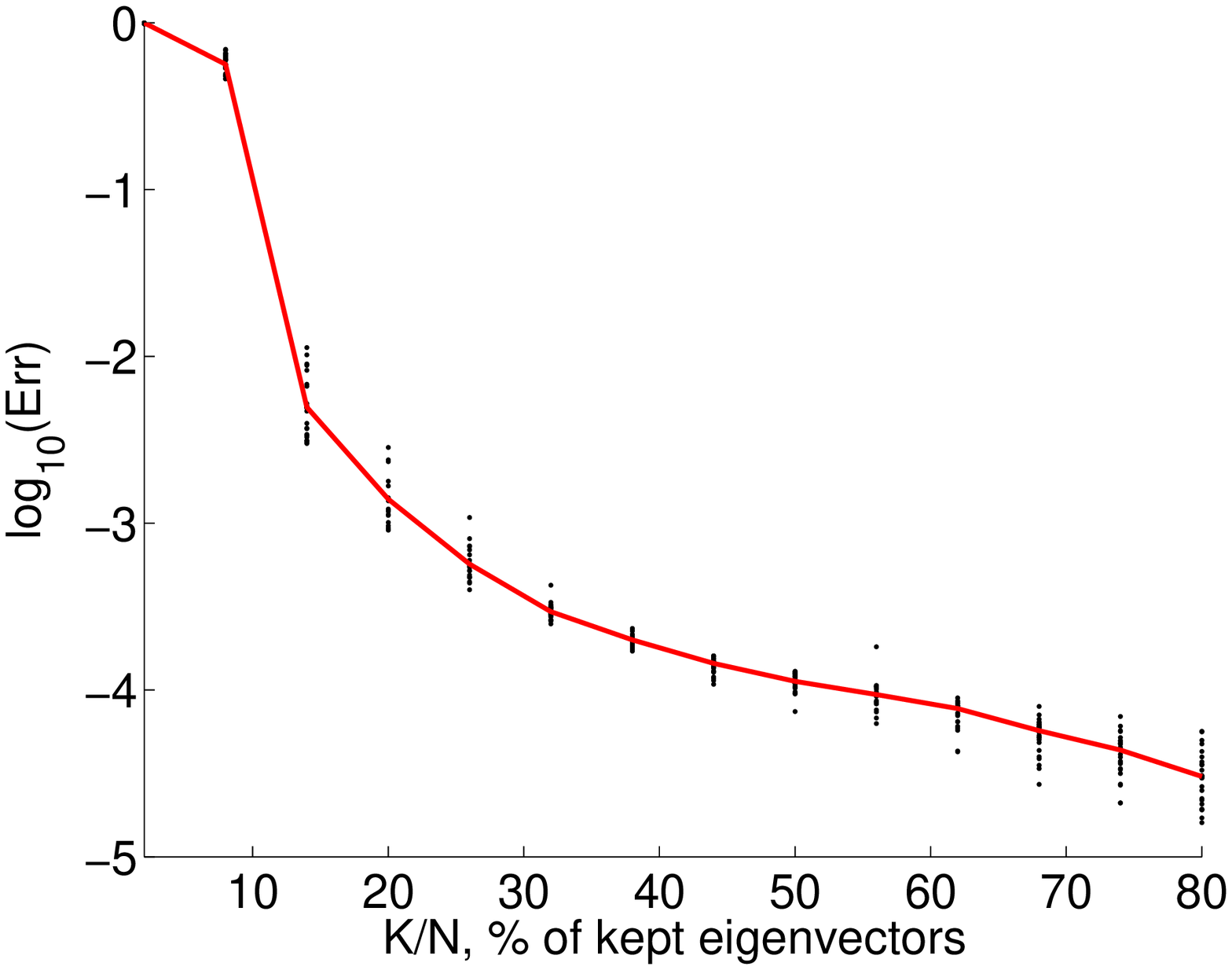}&\hspace{-6mm}
		\includegraphics[width=.33\linewidth]{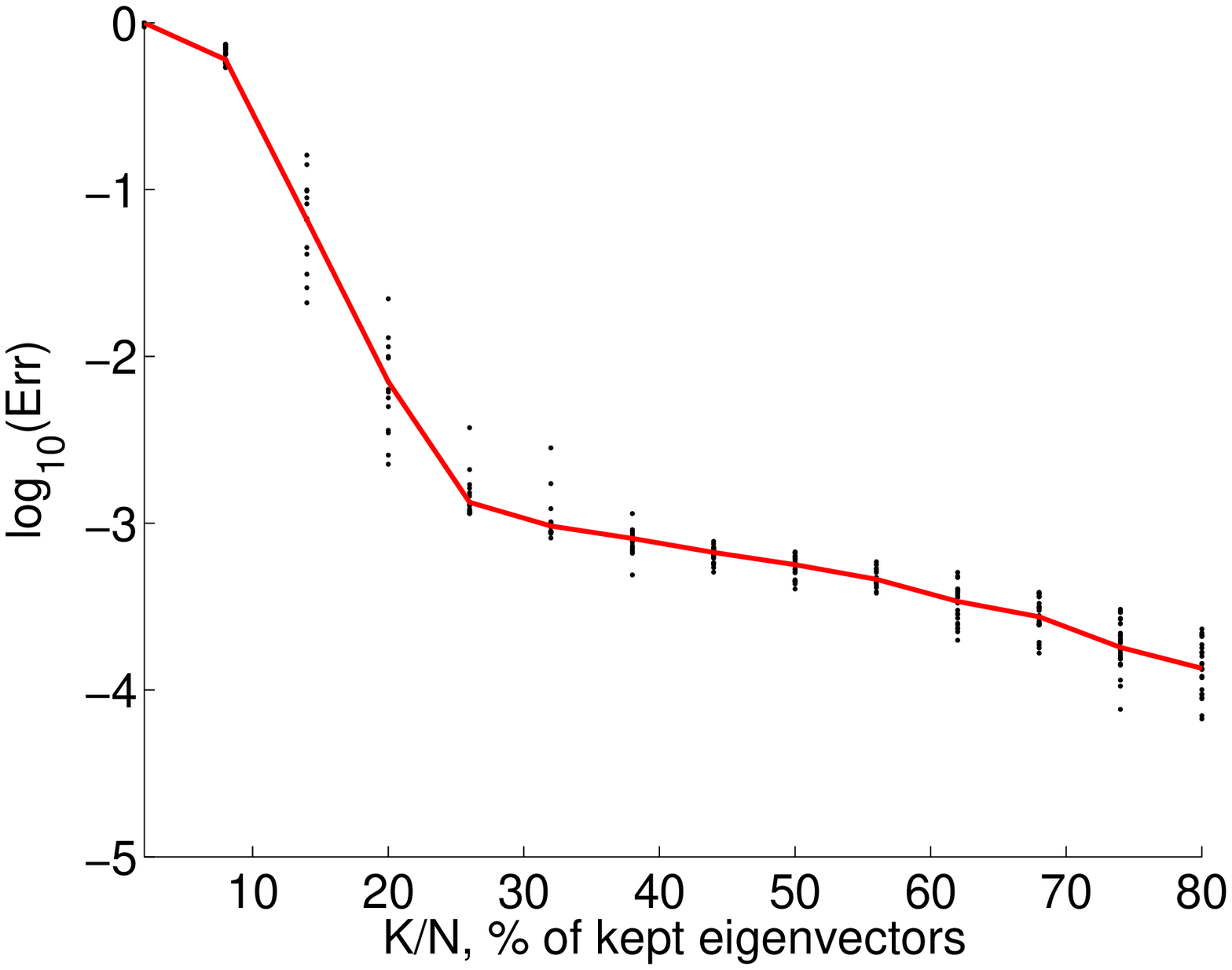}\\
		$\ga=2$ & $\ga=4$ & $\ga=8$	
	\end{tabular}
	\begin{tabular}{c}
    \includegraphics[width=.4\linewidth]{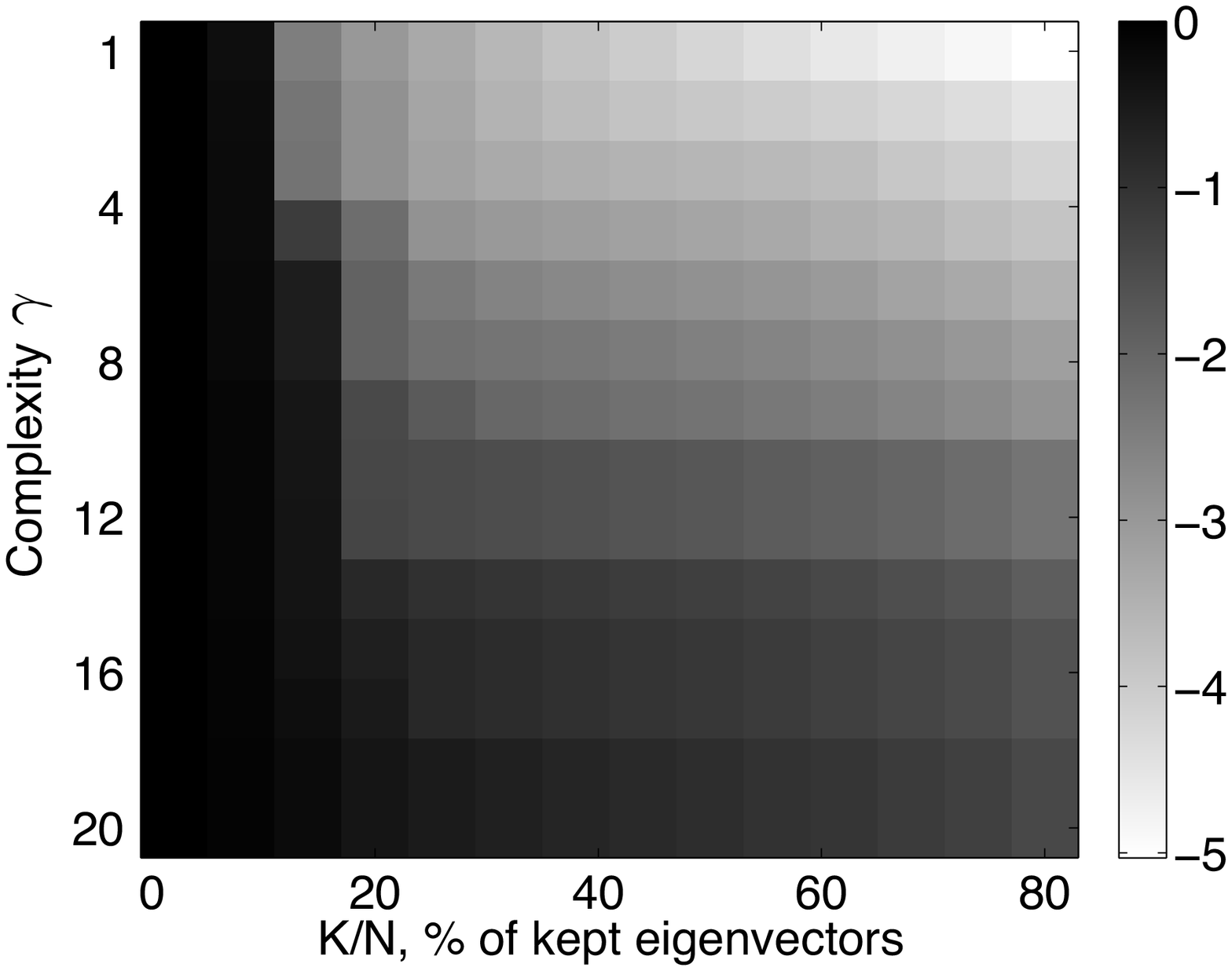}\\
    $\log_{10}(\Err(\si_{\ga},K/N))$
    \end{tabular}
}{ 
	Compressive wave propagation in a smooth medium. Top row: recovery error decay $\log_{10}(\Err(\si_{\ga},K/N))$ as a function of the sub-sampling $K/N$ for various complexity $\ga$ of the medium. Each black dots corresponds to the error of a given random set $\Om$ (the red curve is the result of the averaging among these sets). Bottom row: 2D display of the error $\log_{10}(\Err(\si_{\ga},K/N))$ as a function of both $K/N$ (horizontal axis) and $\ga $ (vertical axis).
}{fig-sin-all-error}

\paragraph{Piecewise smooth medium.}

Jump discontinuities in the impedance $\si$ reflect the propagating spikes and thus deteriorate the sparsity of the solution when $t$ increases, as shown on Figure \ref{bv-steps}, top row. Figure \ref{bv-steps} shows that compressive wave computation is able to recover the position of the spikes with roughly $N/5$ to $N/4$ eigenvectors. 

\newcommand{\bvspeed}[1]{ \includegraphics[width=.3\linewidth]{bv-steps/bv-steps-rough4-contrast30-#1} }
\newcommand{\myrot}[1]{\rotatebox{90}{\quad\;#1}}
\newcommand{\prespace}{\hspace{-7mm}}
\newcommand{\interspace}{\hspace{-5mm}}
\myfigure{
	\begin{tabular}{cc}
	\includegraphics[width=.33\linewidth]{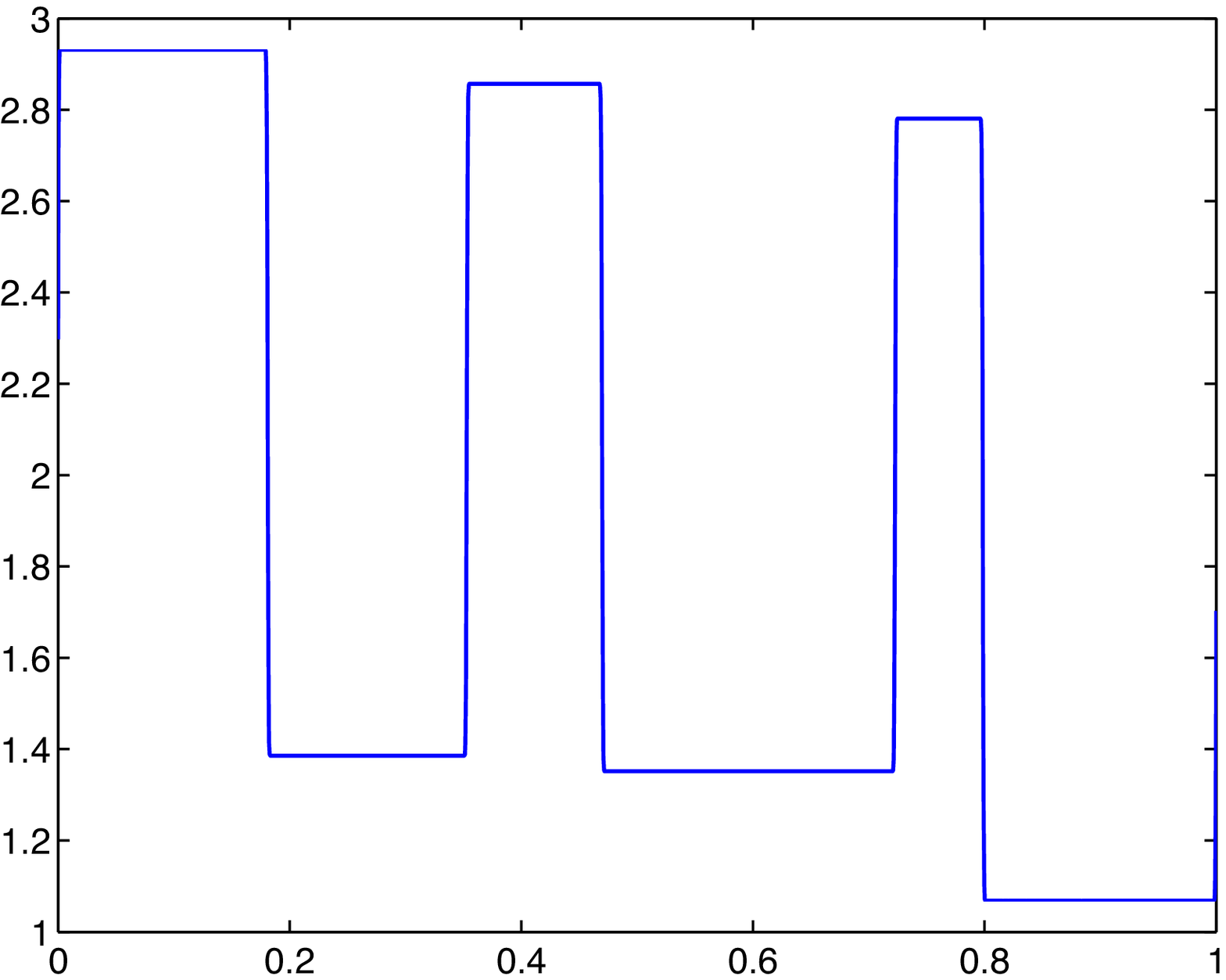}& 
	\includegraphics[width=.38\linewidth]{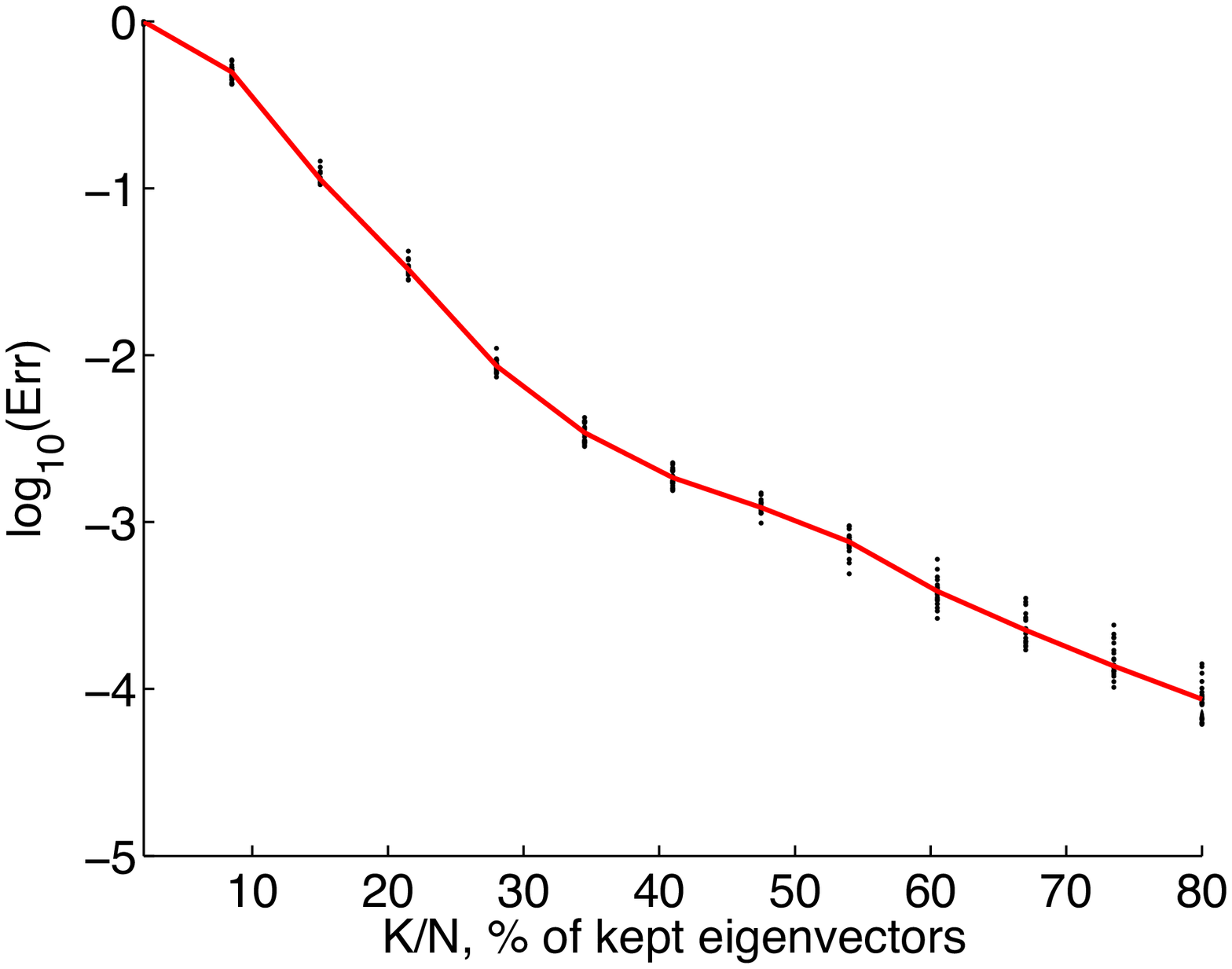}\\
	Speed $\si^{-1}$ & $\log_{10}(\Err(\si_{\ga},K/N))$
	\end{tabular}
	\begin{tabular}{cccc}
		\myrot{\qquad $u$}& \prespace{}
		\bvspeed{true-2.eps}& \interspace{}
		\bvspeed{true-3.eps}& \interspace{}
		\bvspeed{true-4.eps}\\ 
		\myrot{$K/N=0.2$}& \prespace{}
		\bvspeed{sub20-2.eps}& \interspace{}
		\bvspeed{sub20-3.eps}& \interspace{}
		\bvspeed{sub20-4.eps}\\ 
		\myrot{$K/N=0.15$}& \prespace{}
		\bvspeed{sub15-2.eps}& \interspace{}
		\bvspeed{sub15-3.eps}& \interspace{}
		\bvspeed{sub15-4.eps}\\ 
		& $t=.2$ & $t=.4$ & $t=.6$ 
	\end{tabular}
}{ 
	Examples of approximate solution $\tilde u[j](t)$ for a piecewise smooth medium.
}{bv-steps}

To quantify more precisely the recovery performance, we consider a family of piecewise smooth media $\si_\ga$ parameterized by its number of step discontinuities $\ga \in [1, 20]$. The contrast $\si_{\max}/\si_{\min} \in [1,10]$ is also increased linearly with the complexity $\ga$ of the medium. The discontinuities are uniformly spread over the spatial domain $[0,1]$. The piecewise smooth impedance $\si_\ga$ is slightly regularized by a convolution against a Gaussian kernel of standard deviation $5/N$. This tends to deteriorate the sparsity of the solution when $t$ increases, but helps to avoid numerical dispersion due to the discretization of the Laplacian. Figure \ref{fig-piecewise-all-error} shows how the recovery error $\Err(\si_{\ga},K/N)$ scales with complexity of the medium and the number $K$ of eigenvectors. 

\myfigure{
	\begin{tabular}{ccc}
		\includegraphics[width=.33\linewidth]{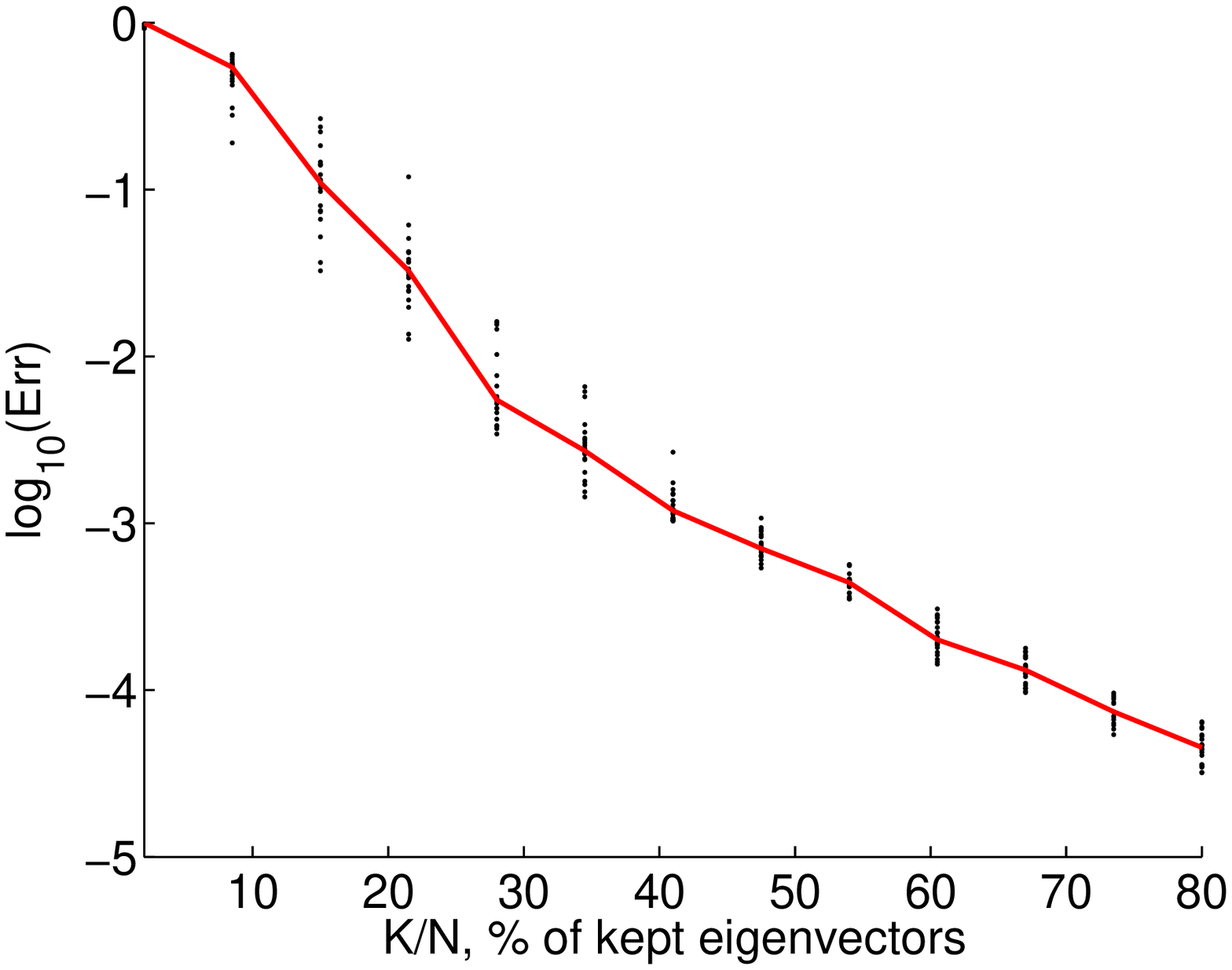}&\hspace{-6mm}
		\includegraphics[width=.33\linewidth]{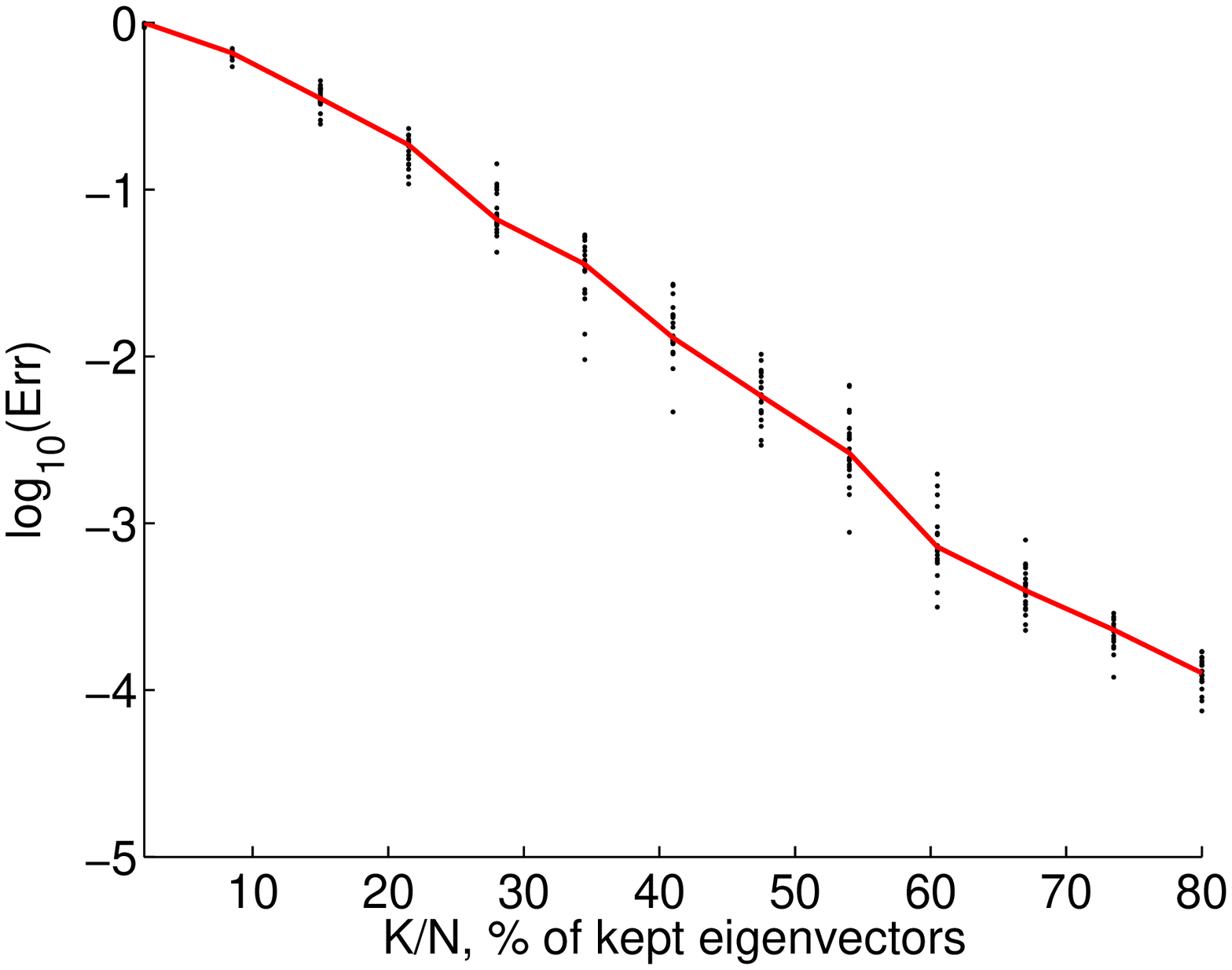}&\hspace{-6mm}
		\includegraphics[width=.33\linewidth]{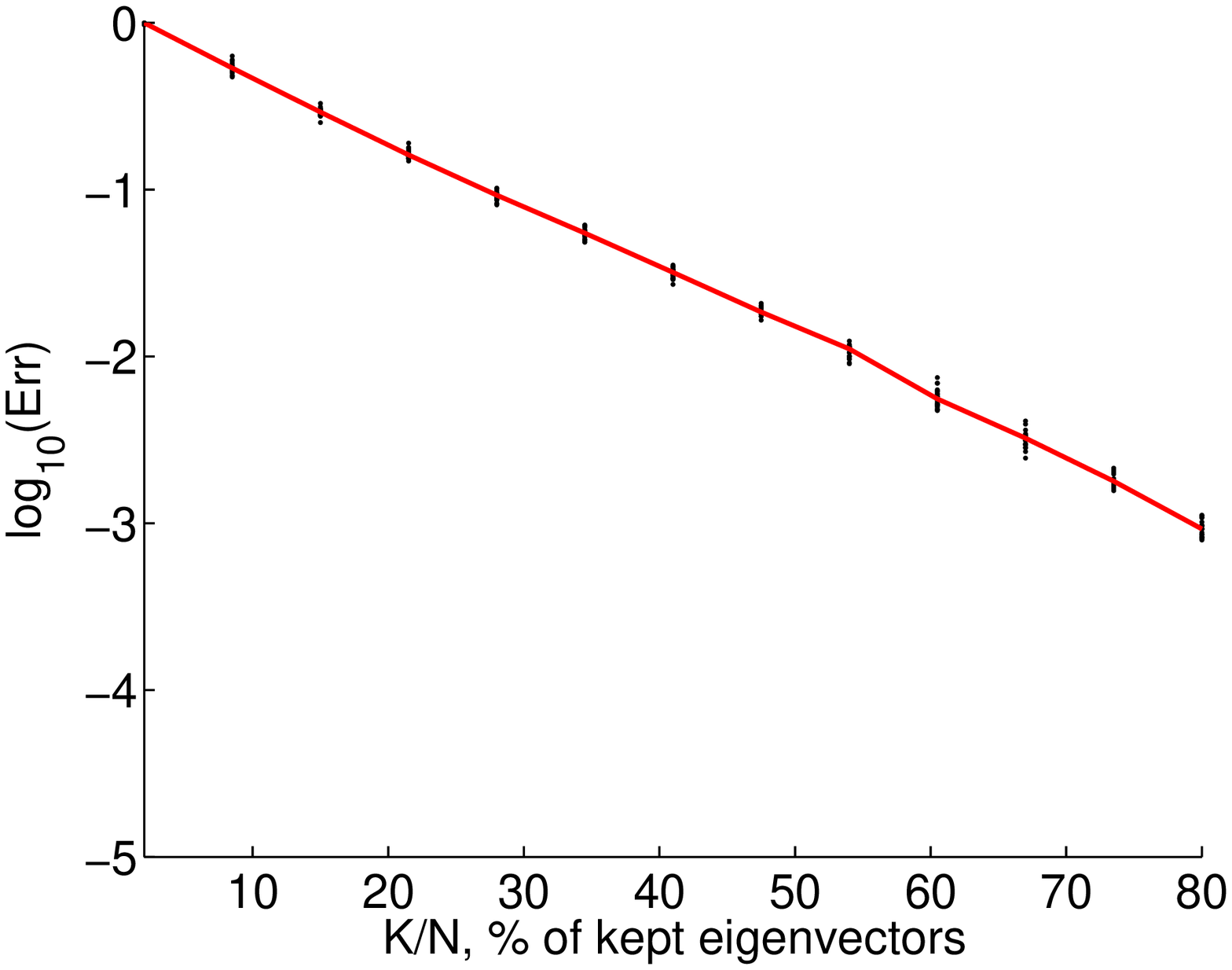}\\
		$\ga=2$ & $\ga=4$ & $\ga=8$	
	\end{tabular}
	\begin{tabular}{c}
    \includegraphics[width=.4\linewidth]{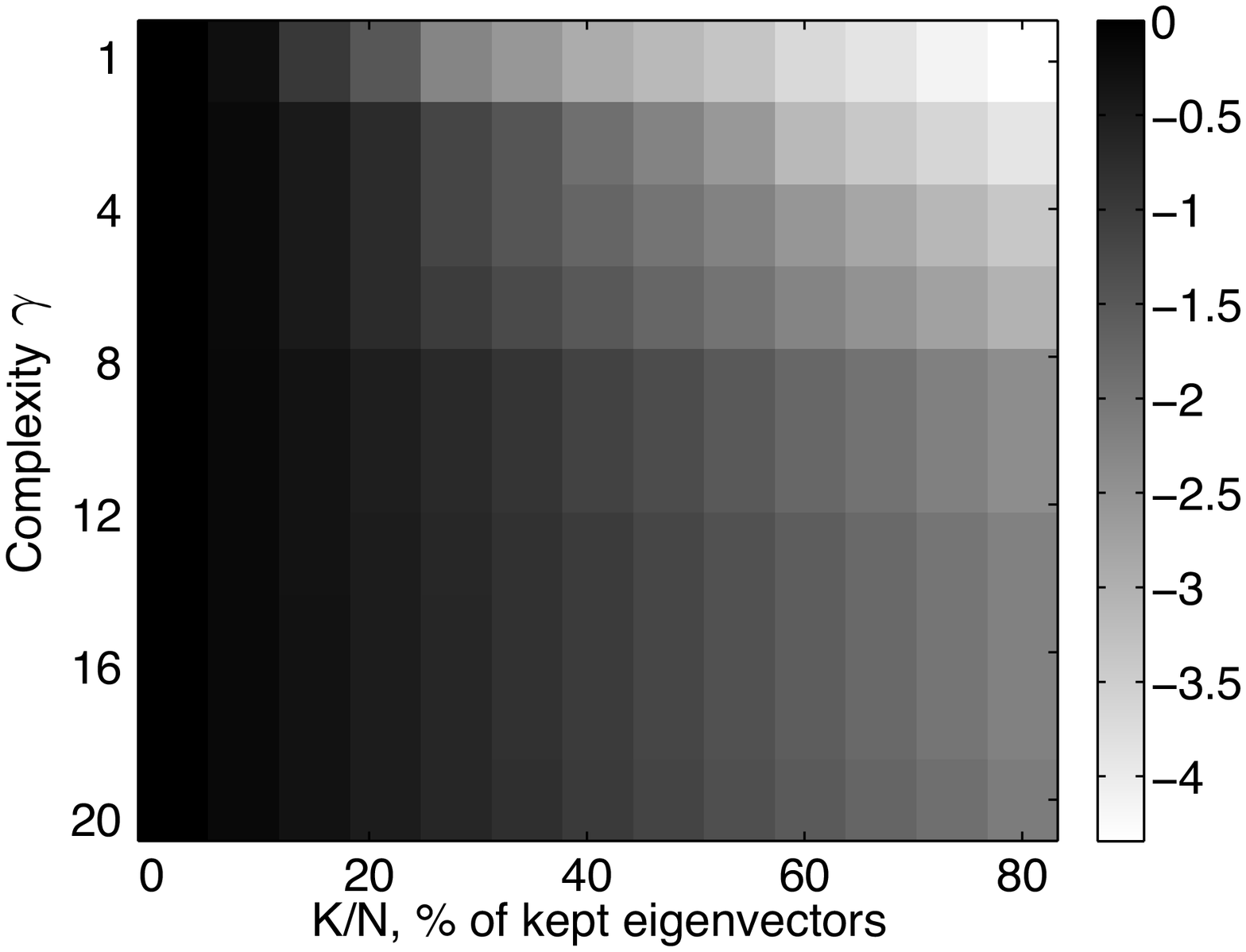}\\
    $\log_{10}(\Err(\si_{\ga},K/N))$
    \end{tabular}
}{ 
	Compressive wave propagation in a piecewise smooth medium. 
}{fig-piecewise-all-error}

\subsection{Compressive Reverse-Time Migration}\label{sec:crtm}

In this example, we consider an idealized version of the inverse problem of reflection seismology, where wavefield measurements at receivers are used to recover some features of the unknown impedance $\sigma(x)$. Our aim is to show that compressive wave computation offers significant memory savings in the implementation of a specific adjoint-state formulation that we call \emph{snapshot reverse-time migration}.

\paragraph{Review of 1D reverse-time migration.}

We will assume a one-dimensional setup as in the rest of this paper, i.e.,
\begin{equation}\label{eq:wave-original}
\sigma^2(x) \frac{\pd^2 u}{\pd t^2} - \frac{\pd u}{\pd x^2} = 0,
\end{equation}
with a known localized initial condition $u(x,0) = u_0(x)$, $\pd u / \pd t(x,0) = u_1(x)$, and over a domain sufficiently large that the choice of boundary conditions does not matter. The $x$ coordinate plays the role of depth. As in classical treatments of reflection seismology, the squared impedance is perturbed about a smooth, known reference medium $\sigma_0^2$, as
\[
\sigma^2(x) = \sigma_0^2(x) + r(x),
\]
where the high-frequency perturbation $r(x)$ has the interpretation of ``reflectors''. The wave equation is then linearized as
\begin{equation}\label{eq:wave-linearized}
\sigma_0^2(x) \frac{\pd^2 u}{\pd t^2} - \frac{\pd u}{\pd x^2} = - r(x) \frac{\pd^2 u_{\mbox{inc}}}{\pd t^2},
\end{equation}
where the incoming field $u_{\mbox{inc}}$ solves the wave equation in the unperturbed medium $\sigma_0^2$. An important part of the seismic inversion problem---the ``linearized problem"---is to recover $r(x)$ from some partial knowledge of $u(x,t)$, interpreted as solving (\ref{eq:wave-linearized}). We will assume the availability of \emph{snapshot} data, i.e., the value of $u$ and $\pd u / \pd t$ at some fixed time $T$,
\[
(d_1(x), d_2(x)) = (u(x,T), \frac{\pd u}{\pd t}(x,T)),
\]
possibly restricted in space to a region where the waves are reflected, by opposition to transmitted. We then let $F[\sigma^2_0]$ for the forward, or modeling operator, to be inverted:
\begin{equation}\label{eq:forward-op}
\begin{pmatrix} d_1 \\ d_2 \end{pmatrix} = F[\sigma^2_0] r.
\end{equation}

A more realistic seismic setup would be to assume the knowledge of $u(0,t)$ for positive $t$, but the step of going from $(d_1, d_2)$ to $u(0,t)$, or vice-versa, is a depth-to-time conversion that should present little numerical difficulty. While assuming trace data $u(0,t)$ would give rise to an adjoint-state wave equation with a right-hand side, working with snapshot data has the advantage of casting the adjoint-state equation as a final-value problem without right-hand side.

More precisely, a simple argument of integration by parts (reproduced in the Appendix) shows that the operator $F$ in (\ref{eq:forward-op}) is transposed as
\begin{equation}\label{eq:imaging-op}
F^*[\sigma^2_0] \begin{pmatrix} d_1 \\ d_2 \end{pmatrix} = - \int_0^T q(x,t) \frac{\pd^2 u_{\mbox{inc}}}{\pd t^2}(x,t)  dt,
\end{equation}
where $q$ solves the adjoint-state equation
\begin{equation}\label{eq:wave-adjoint}
\sigma_0^2(x) \frac{\pd^2 q}{\pd t^2} - \frac{\pd q}{\pd x^2}   = 0,
\end{equation}
with final condition
\[
q(x,T) = \frac{1}{\sigma^2_0(x)} d_2(x), \qquad \frac{\pd q}{\pd t}(x,T) = \frac{-1}{\sigma^2_0(x)} d_1(x).
\]
(notice the swap of $d_1$ and $d_2$, and the minus sign.)

In nice setups, the action of $F^*[\sigma_0^2]$, or $F^*$ for short, called imaging operator, is kinematically equivalent to that of the inverse $F^{-1}$ in the sense that the singularities of $r$ are in the same location as those of $F^*  \begin{pmatrix} d_1 \\ d_2 \end{pmatrix}$. In other words the normal operator $F^* F$ is pseudodifferential. We also show in the Appendix that $F^*$ is the negative Frechet derivative of a misfit functional for snapshot data, with respect to the medium $\sigma^2$, in the tradition of adjoint-state methods \cite{Ple}. Hence applying the imaging operator is a useful component of solving the full inverse problem.

A standard timestepping method is adequate to compute (\ref{eq:imaging-op}); the adjoint wave equation is first solved until time $t = 0$, then both $u$ and $q$ are evolved together by stepping forward in time and accumulating terms in the quadrature of (\ref{eq:imaging-op}). However, this approach is not without problems:
\begin{itemize}
\item The CFL condition restricting the time step for solving the wave equation is typically smaller  than the time gridding needed for computing an accurate quadrature of (\ref{eq:imaging-op}). The potential of being able to perform larger, upscaled time steps is obvious.

\item The backward-then-forward technique just discussed assumes time-reversibility of the equation in $q$ (or conversely of the equation in $u$), a condition that is not always met in practice, notably when the wave equation comes with either absorbing boundary conditions or an additional viscoelastic term. For non-reversible equations, computation of (\ref{eq:imaging-op}) comes with a big memory overhead due to the fact that one equation is solved from $t = 0$ to $T$, while the other one is solved from $t = T$ to $0$. One naive solution is to store the whole evolution; a more sophisticated approach involves using checkpoints \cite{Symes-checkpoint}, where memory is traded for CPU time, but still does not come close to the ``working storage" in the time-reversible case. In this context, it would be doubly interesting to avoid or minimize the penalty associated with time stepping.
\end{itemize}

\paragraph{Numerical validation of compressive reverse time migration.}

As a proof of concept, we now show how to perform snapshot reverse-time migration (RTM) without timestepping in the case of the reversible wave equation, on a 1D grid of $N=2048$ points. The approach here is simply to compute independently each term of a quadrature of (\ref{eq:imaging-op}) using the compressive wave algorithm for $u$ and $q$. We leave to a future project the question of dealing with 2D and 3D non-reversible examples, but we are confident that most of the ideas will carry through.

The smooth medium $\si_0^2$ is defined as in \eqref{eq-smooth-medium} with $\ga = 1$ (one oscillation) and a contrast $\si_{\max}^2/\si_{\min}^2 = 1.4$. The reflectors $r(x)$ is a sum of two Gaussian bumps of standard deviation $7/N$ and amplitude respectively $-0.6$ and $0.6$, see figure \ref{fig-rtm-error}, top row.

We first compute the input $d_1$ and $d_2$ of the RTM by computing the solution $u(x,t)$ of the wave equation in the perturbed medium $\si^2 = \si_0^2 + r$, and then evaluate $d_1(x) = u(x,T)$ and $d_2(x) = \frac{\partial u}{\partial t} (x,T)$. The initial condition $u(x,0)$ is a second derivative of a Gaussian of standard deviation $7/N$. This simulates seismic observations at time $t=T$, see figure \ref{fig-rtm-error}, top row.  

The algorithm proceeds by computing approximations $\tilde q(x,t_i)$ and $\tilde u_{\text{inc}}(x,t_i)$ of the forward and backward propagations $q$ and $u_{\text{inc}}$ at $N_t=N/10$ equispaced times $\{ t_i \}_{i=0}^{N_t-1}$. These approximations are computed for each $t_i$ independently, without time stepping, by using the compressive algorithm with a small set of $K < N$ eigenvectors. The RTM estimation of the residual $r(x)$ is obtained by discretizing \eqref{eq:imaging-op}
\eq{
	\tilde r(x) = 
	- \frac{1}{n_t} \sum_{i=0}^{n_t-1} \tilde q(x,t_i) \frac{\pd^2 \tilde  u_{\mbox{inc}}}{\pd t^2}(x,t_i),
}
where the derivative is computed using finite differences. 

The success of compressive RTM computations is measured using an error measure $\Err(K/N)$ obtained similarly to \eqref{eq-error-measure} by averaging over several randomizations for the sets $\Om \in \Om_K$ of $|\Om|=K$ eigenvectors
\eq{
	\Err(K/N)^2 = \frac{1}{N |\Om_K| \, \norm{u}} \sum_{\Om \in \Om_K} \sum_{j=0}^{N-1} | \tilde r_0[j] - \tilde r[j] |^2 
}
where $\tilde r_0$ is the RTM estimation obtained with the full set of $N$ eigenvectors.

Figure \ref{fig-rtm-error}, bottom row, displays the decay of $\Err(K/N)$ with the number of eigenvectors used for the compressive computations. This shows that roughly $20\%$ of eigenvectors are needed to reach 1 digit of accuracy, and $30\%$ to reach 2 digits of accuracy.

Note that this simple RTM method cannot be expected to recover the original $r(x)$ accurately, mainly because we have not undone the action of the normal operator $F^* F$ in the least-square treatment of the linearized problem.

Also note that the compressive algorithm was run ``as is", without any decomposition of the initial condition, or split of the time interval over which each simulation is run, as in Section \ref{sec:sparsity-enhancement}.

\myfigure{
	\begin{tabular}{cc}
    \includegraphics[width=.35\linewidth,height=.25\linewidth]{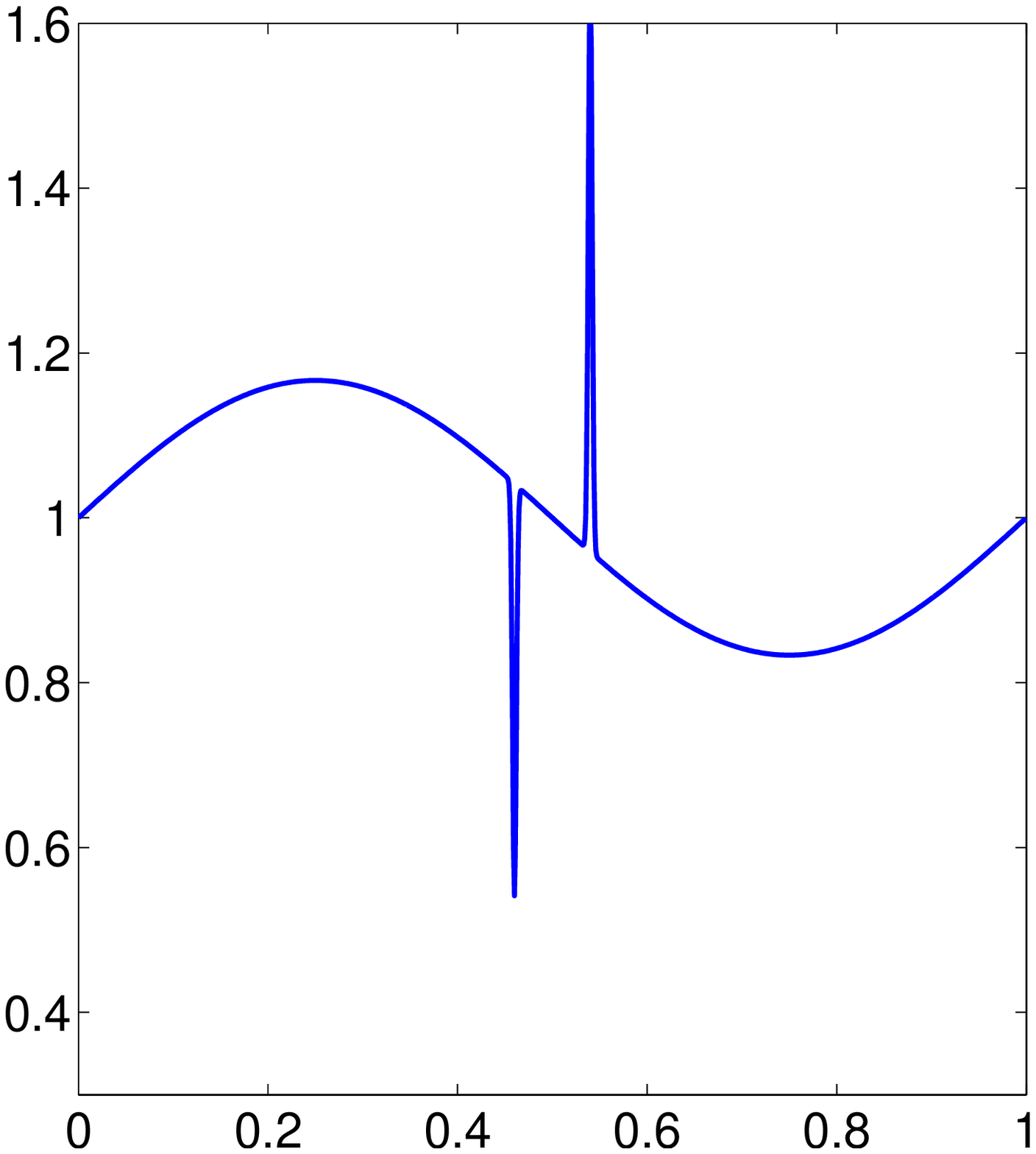}&
    \includegraphics[width=.35\linewidth,height=.25\linewidth]{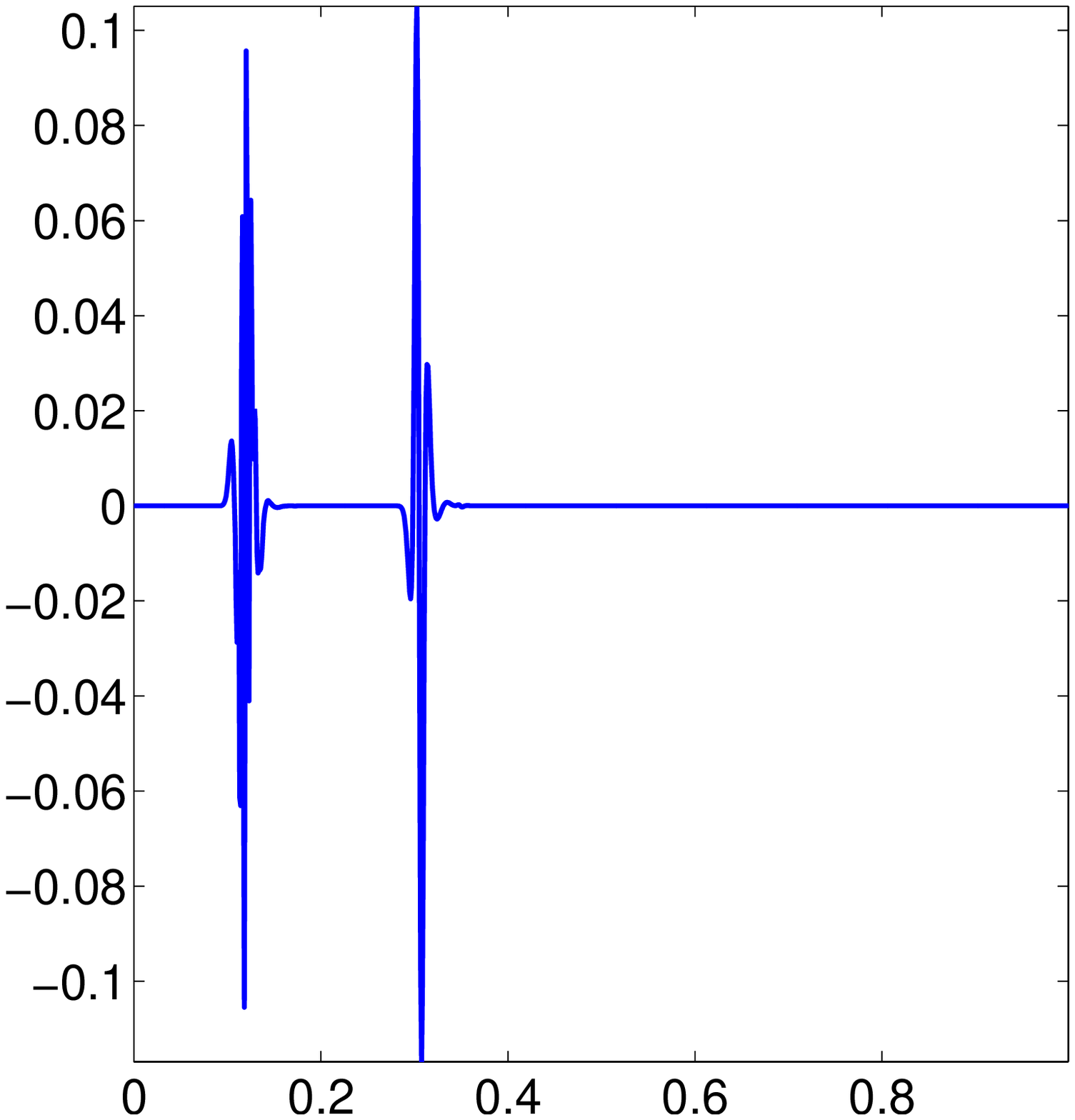}\\
    $\si^2 = \si_0^2 + r$ & $d_1$
    \end{tabular}
    \\
	\begin{tabular}{cc}
    \includegraphics[width=.35\linewidth,height=.25\linewidth]{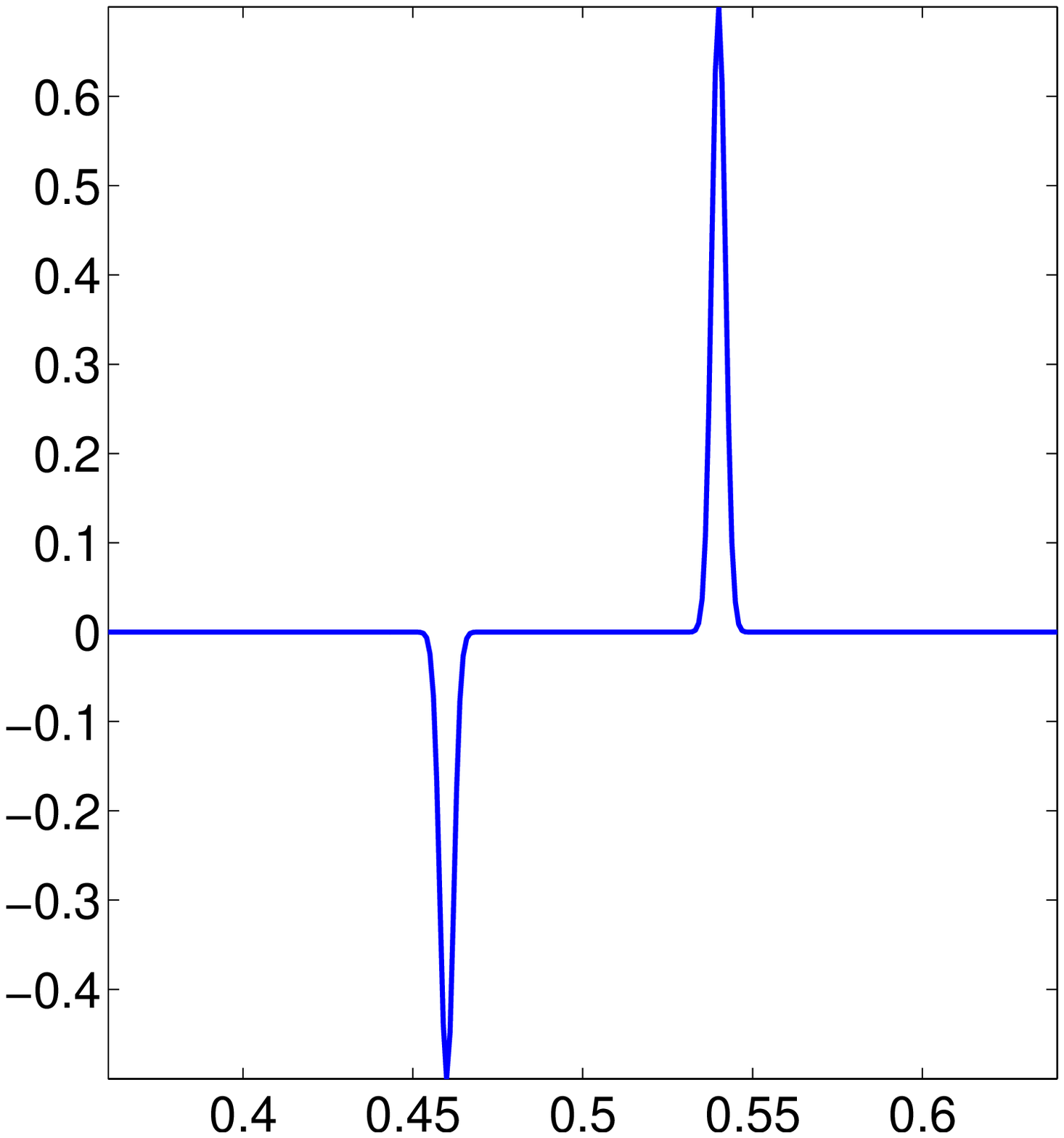}&
    \includegraphics[width=.35\linewidth,height=.25\linewidth]{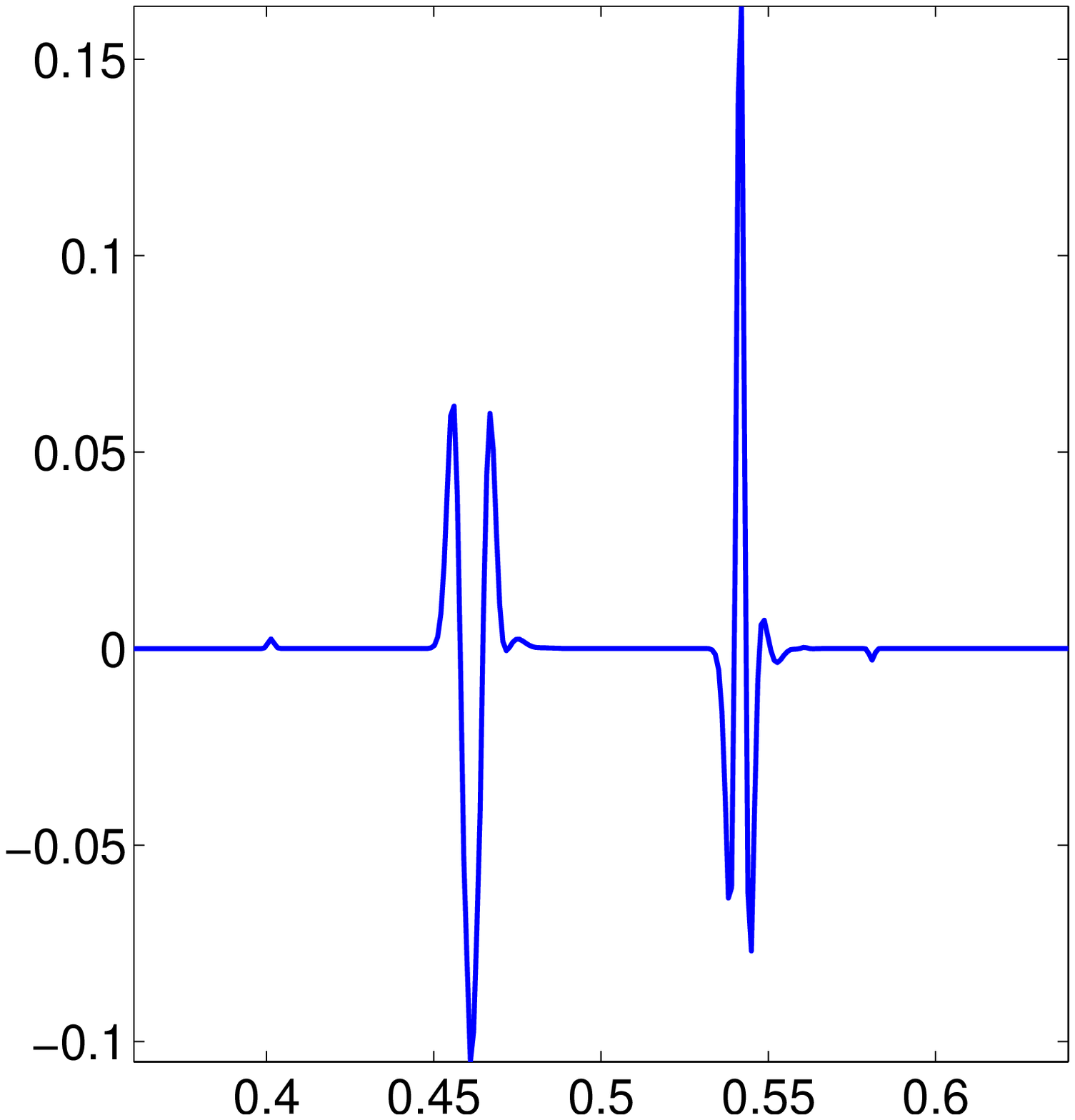}\\
    $r(x)$ (zoom) & $\tilde r(x)$ (zoom)
    \end{tabular}
    \\
	\begin{tabular}{c}
    \includegraphics[width=.4\linewidth]{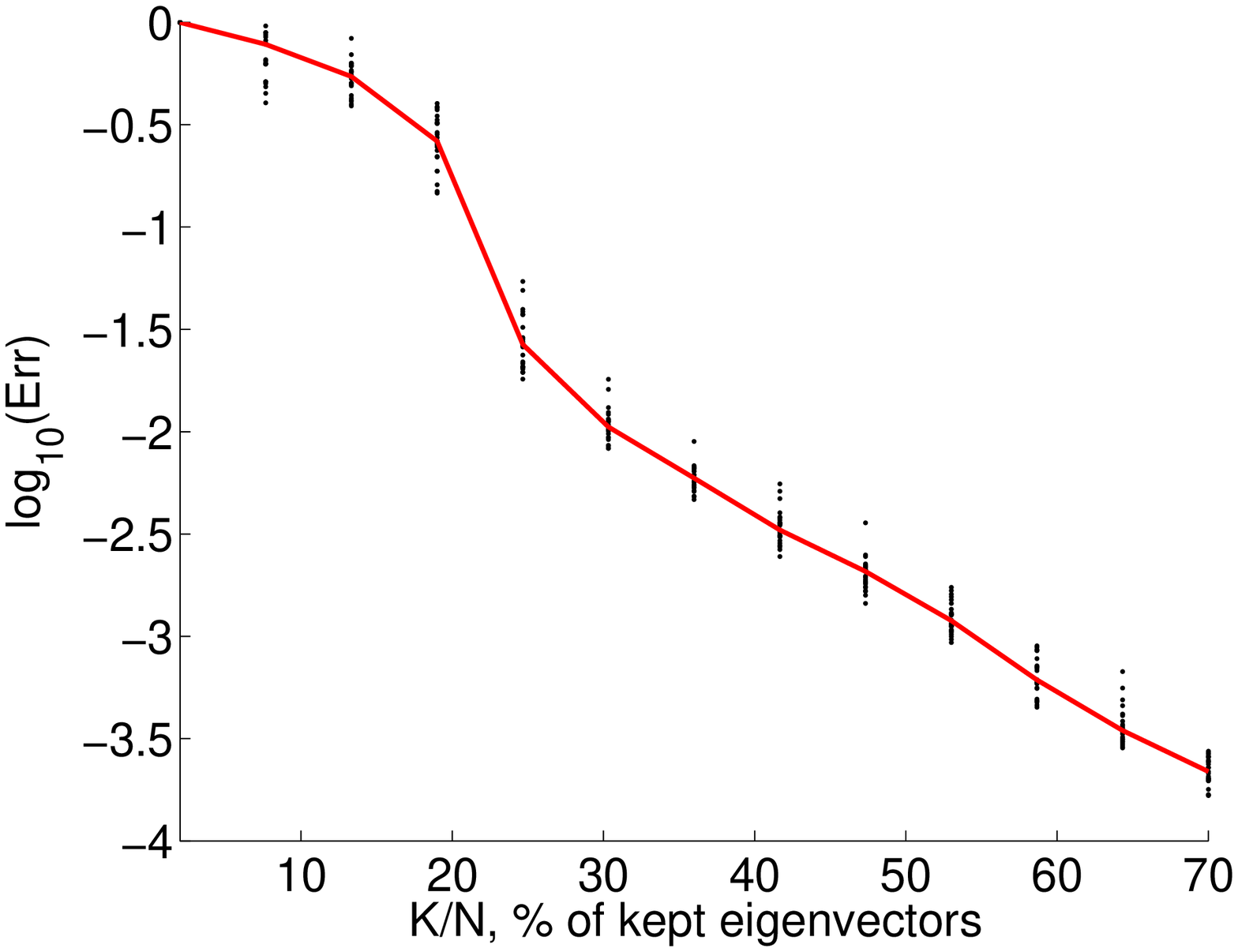}\\
    $\log_{10}(\Err(K/N))$
    \end{tabular}
}{ 
	Top row: perturbed speed $\si^2$ and input $d_1$ of the RTM computations.
	Middle row: reference solution $r_{\text{RTM}}$ and approximated solution $\tilde r_{\text{RTM}}$ for $K/N = 0.2$.
	Bottom row: recovery error decay $\log_{10}(\Err(K/N))$ as a function of the sub-sampling $K/N$. Each black dot corresponds to the error of a given random set $\Om$ (the red curve is the result of the averaging among these sets).
}{fig-rtm-error}

\section{Discussion}

A number of questions still need to be addressed, from both a mathematical and a practical viewpoints.

\begin{itemize}
\item \emph{Number of eigenvectors}. While it is easy to check the accuracy of a compressive solver relative to a standard scheme, an important question is a posteriori validation
without comparison to an expensive solver. Were enough eigenvectors sampled?
One way to answer this question could be to compare simulation results to those of a coarse-grid standard finite difference scheme. Another possibility is to check for convergence in $K$, as the recovered solution stabilizes as more and more eigenvectors are added. To the best of our knowledge compressed sensing theory does not yet contain a theoretical understanding of a posteriori validation, though.

\item \emph{Two and three-dimensional media}. Passing to a higher-dimensional setting will require using adequate bases for the sparsity of wavefields. Curvelets is one choice, and wave atoms is another one, as both systems were shown to provide $\ell_p$ to $\ell_p$ boundedness for all $p > 0$, of the wave equation Green's functions in $C^\infty$ media \cite{CD, Smith-wave}. (Wavelets or ridgelets would only provide $\ell_p$ to $\ell_1$ boundedness.) The sparsity question for wave equations in simple nonsmooth media, e.g. including one interface, is to our knowledge completely open. The question of incoherence of the eigenfunctions with respect to such systems will be posed scale-by-scale, in the spirit of wavelets vs. Fourier as in \cite{chen-basis-pursuit} for instance. Incoherence or extension questions for generalized eigenfunctions may also be complicated by the presence of infinite-dimensional eigenspaces in unbounded domains.

\item \emph{Parallel computing}. An exciting outlook is that of scaling the compressive approach to clusters of computers where properly preconditioned and domain-decomposed Helmholtz equations are solved in an entirely parallel fashion over $\omega$. The $\ell_1$ solver should be parallelized too, and for the iterative schemes considered the two bottleneck operations are those of applying the incomplete matrix of eigenvectors and its transpose. For reverse-time migration where several times need to be considered, an outstanding question is that of being able to reliably predict the location of large basis coefficients at time $t$ from the knowledge of wavefields already solved for at neighboring times $t - \Delta t$ and $t + \Delta t$. Note that the advocated ``snapshot" variant of reverse-time migration in Section \ref{sec:crtm} contains a preliminary time-to-depth conversion step.

\item \emph{Other equations and random media}. Any linear evolution equation that obeys interesting sparsity and incoherence properties can in principle be studied in the light of compressive computing. This includes the heat equation in simple media; and possibly also the (one-particle) Schr\"{o}dinger equation in smooth potentials when a bound on wave numbers of the kind $|\xi| \lesssim 1/h$ is assumed, where $h$ is Planck's constant. Interestingly, the compressive method also makes sense for mildly random media as the $\ell_1$ reconstruction empirically filters out the trailing ``coda" oscillations when they follow behind a coherent wavefront.

\end{itemize}

\appendix
\section{Additional proofs}

{\bf Complement to the proof of Theorem \ref{teo:incoherence}.}

In order to justify (\ref{eq:interm-decayBV}), consider the quantity
\[
I_\eps(x) = |u(x)|^2 + \frac{|u'(x)|^2}{\omega^2 \sigma_\eps^2(x)},
\]
where $\sigma_\eps$ is adequately regularized in the sense of Lemma \ref{teo:BV}, but $u(x)$ still solves the equation with $\sigma(x)$. Then by point 1 of Lemma \ref{teo:BV}, $I'_\eps(x)$ makes sense, and
\[
I'_\eps(x) = \left( \frac{\sigma_\eps^2(x) - \sigma^2(x)}{\sigma_\eps^2(x)} \right) \, 2 \mbox{Re}  \, (u'(x) \overline{u}(x)) - 2 (\log \sigma_\eps(x))' \frac{|u'(x)|^2}{\omega^2 \sigma_\eps^2(x)}.
\]
The inhomogeneous Gronwall inequality\footnote{If $\dot{y}(t) \leq f(t) + g(t) y(t)$, then $y(t) \leq y(0) \exp \left( \int_0^t g(s) ds \right) + \int_0^t f(s) \exp \left( \int_s^t g(\tau) d\tau \right) ds$, and vice-versa with $\geq$ in both equations instead of $\leq$.} can be applied and yields
\begin{equation}\label{eq:Ieps}
I_\eps(x) \geq I_\eps(0) \,\exp \left( - 2 \int_0^x | (\log \sigma_\eps(y))' | \, dy \right) - \int_0^x F_\eps(y) \exp \left( - 2 \int_y^x | (\log \sigma_\eps(z))' | \, dz \right) \, dy,
\end{equation}
where
\[
F_\eps(x) = \left( \frac{| \sigma_\eps^2(x) - \sigma^2(x) |}{\sigma_\eps^2(x)} \right) \, 2 | u'(x) u(x) |.
\]
We can now invoke the different points of Lemma \ref{teo:BV}. Points 2 and 3, in conjunction with uniform boundedness of $u$ and $u'$ over $[0,1]$, allow to conclude that $\int_0^1 F_\eps(x) \, dx \leq C \cdot \eps$ for some constant $C$. By points 4 and 5, the second term in the right-hand side of equation (\ref{eq:Ieps}) tends pointwise to zero as $\eps \to 0$.  Hence by point 2 we have $\lim_{\eps \to 0} I_\eps(x) = I(x)$; and the first term in the right-hand side of equation (\ref{eq:Ieps}) is handled again by points 4 and 5. The inequality obtained in the limit is
\[
I(x) \geq I(0) \exp \left( - 2 \mbox{Var}_x(\log \sigma) \right),
\]
which is what we sought to establish.

\bigskip

{\bf Complement to the proof of Theorem \ref{teo:egv-gaps}.}

Let us show that (\ref{eq:egv-gaps}) holds when $\sigma \in BV([0,1])$ and not just $C^1([0,1])$. Let $\sigma_\eps$ be a regularization of $\sigma$, in the usual sense. Define $\theta_\eps(x)$ through
\[
\cot \theta_\eps(x) = \frac{1}{\omega \sigma_\eps(x)} \frac{u'(x)}{u(x)}, \qquad \mbox{or} \qquad \tan \theta_\eps(x) = \omega \sigma_\eps(x) \frac{u(x)}{u'(x)},
\]
where $u(x)$ solves the Sturm-Liouville problem with the un-regularized $\sigma$. Then a short calculation shows that
\[
\theta'_\eps(x) = \frac{\sigma'_\eps(x)}{\sigma_\eps(x)} \, \sin \theta_\eps(x) \cos \theta_\eps(x) + \omega \sigma(x) \, \frac{\sigma(x)}{\sigma_\eps(x)} \frac{{\sin}^2 \theta_\eps(x)}{{\sin}^2 \theta(x)}.
\]
The ratio of sine squared can be recast in terms of a familiar quantity:
\[
\frac{{\sin}^2 \theta_\eps(x)}{{\sin}^2 \theta(x)} = \frac{1 + {\cot}^2 \theta_\eps(x)}{1 + {\cot}^2 \theta(x)} = \frac{1 + \frac{1}{\omega^2 \sigma^2_\eps(x)} |\frac{u'(x)}{u(x)}|^2}{1 + \frac{1}{\omega^2 \sigma^2(x)} |\frac{u'(x)}{u(x)}|^2} = \frac{I_\eps(x)}{I(x)}.
\]
The same result follows from a similar formula with cosines in case $\sin \theta = 0$. Consider now two different frequencies $\omega_1$, $\omega_2$, and integrate to get
\begin{align*}
\theta_{\eps,1}(1) - \theta_{\eps,2}(1) = \int_0^1 \frac{\sigma'_\eps(x)}{\sigma_\eps(x)} \, &[ \sin \theta_{\eps,1} \cos \theta_{\eps,1} - \sin \theta_{\eps,2} \cos \theta_{\eps,2} ] dx \\
&\qquad +  (\omega_1 - \omega_2) \int_0^1 \sigma(x) \frac{\sigma(x)}{\sigma_\eps(x)} \frac{I_\eps(x)}{I(x)} \, dx.
\end{align*}
The first term is bounded by $\int_0^1 | \frac{\sigma'_\eps(x)}{\sigma_\eps(x)}| dx$, which tends to Var$(\log\sigma)$ as $\eps \to 0$ by point 5 of Lemma \ref{teo:BV}. The second integral can be compared to $\int_0^1 \sigma(x) dx$ by studying
\begin{align*}
| \int_0^1 \sigma \frac{\sigma}{\sigma_\eps} \frac{I_\eps}{I} \, dx - \int_0^1 \sigma \, dx | &= | \int_0^1 \frac{\sigma}{\sigma_\eps I_\eps} (\sigma I_\eps - \sigma_\eps I) \, dx \\
&\leq \int_0^1 \frac{\sigma I_\eps}{\sigma_\eps I} | \sigma - \sigma_\eps | \, dx + \int_0^1 \frac{\sigma}{I} | I_\eps - I | \, dx.
\end{align*}
All four factors $\sigma$, $I$, $\sigma_\eps$ and $I_\eps$ are bounded and bounded away from zero, by Theorem \ref{teo:incoherence} for $I$ and point 3 of Lemma \ref{teo:BV} for $\sigma_\eps$ and $I_\eps$. Point 2 of Lemma \ref{teo:BV} then implies that $\int_0^1 |\sigma_\eps - \sigma| \, dx \to 0$. We have already argued in the proof of Theorem \ref{teo:incoherence} that $\int_0^1 | I_\eps - I | \, dx \to 0$. The limiting inequalities are (\ref{eq:egv-gaps}), as desired.

\bigskip

{\bf Proof of Proposition \ref{teo:nonuniform}}

Let $\underline{p} = \min p_n$, where $n = 1, \ldots, N$, and let $\punif = 1/N$. The basic heuristic is that, even if the probability of drawing a given measurement decreases by a factor $p_n / \punif$ which may be less than 1, this effect is at least more than offset by sampling a correspondingly larger number of measurements, namely $K_{\unif} \; \punif/\underline{p}$ instead of $K_{\unif}$.

First, let us see why the accuracy bound (\ref{eq:accuracy}) still holds if we increase the probability for any particular $\omega_n$ to be included in the list $\Omega_K$.\footnote{Interestingly, the statement that the accuracy of recovery is improved by adding measurements to the $\ell_1$ minimization problem, is in general false. It is only the error bound that does not degrade.} In our context, assume for the moment that
\begin{equation}\label{eq:assump-prob}
\Pr(\omega_n \in \Omega_K) \geq \Kunif \; \punif,
\end{equation}
uniformly over $n$. Then we may reduce this setup to a situation where all $\Pr(\omega_n \in \Omega_K) = \Kunif \; \punif$ by a rejection sampling procedure, where each $\omega_n \in \Omega_K$ is further labeled as primary, or ``accepted", with probability $(\Kunif \; \punif)/\Pr(\omega_n \in \Omega_K)$, and labeled as secondary, or ``rejected", otherwise. The set of primary measurements alone would be adequate to call upon the general theory; passing to obvious linear algebra notations the corresponding constraints are denoted as $\| Ax - b \|_2 \leq \eps$ where $A$ satisfies the $S$-RIP with high probability. Adding the secondary measurements cannot fundamentally hurt the recovery of $\ell_1$ minimization, because the constraints become
\[
\| Ax - b \|_2^2 + \| B x - c \|_2^2 \leq (\eps')^2
\]
for some $\eps' \asymp \eps$, hence in particular include $\| Ax - b \|_2 \leq \eps'$. This latter ``tube" condition, together with the cone condition that $\| x \|_1$ should be minimized, suffice to obtain the recovery estimate on $x$ as was shown in \cite{candes-stable}. Adding a compatible constraint involving $\| Bx - c \|_2$ only decreases the feasibility set, hence does not alter the recovery estimate.

Second, it remains to show (\ref{eq:assump-prob}). An event such as $\omega_n \in \Omega_K$ refers to the result of a sequential sampling of $K$ objects among $N$, without replacement, and according to a certain distribution, either $p_n$ or $\punif$. This setup is different that the one considered in the classical papers on compressed sensing. It is shown below that
\begin{equation}\label{eq:Punif}
\Pr(\omega_n \in \Omega_{\Kunif}) = \Kunif \; \punif \qquad\qquad \mbox{(uniform probabilities $\punif$)}
\end{equation}
in the uniform case, and
\begin{equation}\label{eq:Pnonunif}
\Pr(\omega_n \in \Omega_K) \geq K \underline{p} \qquad\qquad \mbox{(nonuniform probabilities $p_n$)}
\end{equation}
in the nonuniform case. In order for $\Pr(\omega_n \in \Omega_K)$ to be greater than $\Kunif \;  \punif$ as in equation (\ref{eq:assump-prob}), it suffices therefore that $K \geq \Kunif \;  \punif/\underline{p}$, which proves the proposition.

The following proof of (\ref{eq:Punif}) and (\ref{eq:Pnonunif}) is due to Paul Rubin, who kindly allowed us to reproduce the argument here.

In the case of uniform probabilities, write $\punif = p$ for short. Denote by $\Omega_k$ the set of $\omega_n$ drawn during the first $k$ iterations. By Bayes theorem applied to a sequence of nested (random) events, we have
\begin{align*}
\Pr(\omega_n \notin \Omega_k) &= \Pr(\omega_n \notin \Omega_1) \, \cdot \, \Pr(\omega_n \notin \Omega_2 \vert \omega_n \notin \Omega_1) \, \ldots \, \Pr(\omega_n \notin \Omega_k \vert \omega_n \notin \Omega_{k-1} ) \\
&= (1 - p) \, \cdot \, \left( 1 - \frac{p}{1-p} \right) \, \ldots \, \left(1 - \frac{p}{1-(k-1)p} \right) \\
&= (1 - p) \, \left( \frac{1- 2p}{1-p} \right) \, \ldots \, \left( \frac{1-kp}{1-(k-1)p} \right).
\end{align*}
The factors telescope in this product, with the result that $\Pr(\omega_n \notin \Omega_k) = 1 - kp$, hence $\Pr(\omega_n \in \Omega_K) = kp$ as desired. 

In the case of nonuniform probabilities $p_n$, consider the quantity
\[
q_k = \sum_{n : \omega_n \in \Omega_k} p_n.
\]
The $q_k$ are themselves random variables, since they depend on the history of sampling up to step $k$. Denote by $\hat{\Omega}_{k}$ one realization of $\Omega_k$, and $\hat{q}_k$ the corresponding realization of the random process $q_k$. Then the reasoning essentially proceeds as previously, only by induction on $k$. First, $\Pr(\omega_n \notin \Omega_1 ) = 1 - p_n \leq 1 - \underline{p}$. Assume $\Pr(\omega_n \notin \Omega_{k-1}) \leq 1 - (k-1) \underline{p}$. Then
\[
\Pr(\omega_n \notin \Omega_k \vert \omega_n \notin \hat{\Omega}_{k-1} ) = 1 - \frac{p_n}{1 - \hat{q}_{k-1}} = \frac{1 - \hat{q}_{k-1} - p_n}{1 - \hat{q}_{k-1}}.
\]
We claim that
\begin{equation}\label{eq:Rubinlemma}
\frac{1 - \hat{q}_{k-1} - p_n}{1 - \hat{q}_{k-1}} \leq \frac{1 - k \underline{p}}{1 - (k-1) \underline{p}}.
\end{equation}
To justify this inequality, write the sequence of equivalences
\begin{align*}
\frac{1 - \hat{q}_{k-1} - p_n}{1 - \hat{q}_{k-1}} &\leq \frac{1 - k \underline{p}}{1 - (k-1) \underline{p}} \\
(1 - (k-1) \underline{p})(1 - \hat{q}_{k-1} - p_n) &\leq (1 - k \underline{p}) (1 - \hat{q}_{k-1}) \\
- p_n + (k-1) \underline{p} p_n &\leq - \underline{p} + \hat{q}_{k-1} \underline{p} \\
0 &\leq (p_n - \underline{p}) + \underline{p} (\hat{q}_{k-1} - (k-1) \underline{p}).
\end{align*}
The last inequality is obviously true. Therefore  
\[
\Pr(\omega_n \notin \Omega_k \vert \omega_n \notin \hat{\Omega}_{k-1} ) \leq \frac{1 - k \underline{p}}{1 - (k-1) \underline{p}}.
\]
Because this bound is uniform over $\hat{\Omega}_{k-1}$, it is easy to see that the same inequality holds when the conditioning is on $\omega_n \notin \Omega_{k-1}$ instead of being on $\omega \notin \hat{\Omega}_{k-1}$.\footnote{Indeed, if a random event $B$ is the disjoint union $\bigcup B_i$, and $\Pr(A \vert B_i) \leq C$ for all $i$, then $\Pr(A \vert B) \leq C$ as well. This fact is a simple application of Bayes's theorem: $\Pr(A \vert B) = \Pr(A \& B)/\Pr(B) = \sum_i \Pr(A \& B_i)/\Pr(B) \leq C \sum_i \Pr(B_i)/\Pr(B) = C$.}

We conclude by writing
\begin{align*}
\Pr(\omega_n \notin \Omega_{k}) &= \Pr(\omega_n \notin \Omega_{k} | \omega_n \notin \Omega_{k-1}) \Pr(\omega_n \notin \Omega_{k-1}) \\
 &\leq  \frac{1 - k \underline{p}}{1 - (k-1) \underline{p}} (1 - (k-1) \underline{p}) \\
 &= 1 - k \underline{p}.
\end{align*}

%

\bigskip

{\bf Proof and discussion of equation (\ref{eq:imaging-op})}

In this proof, a subscript $t$ denotes a time differentiation. Consider the following quantity,
\[
\int_0^T \int u \, \left[ \sigma_0^2 \frac{\pd^2}{\pd t^2} - \frac{\pd^2}{\pd x^2} \right] q \, dx dt,
\]
where $u$ solves (\ref{eq:wave-linearized}), and $q$ solves (\ref{eq:wave-adjoint}) with final data $(d_1(x), d_2(x))$ not necessarily equal to $(u(x,T), u_t(x,T))$. Because of (\ref{eq:wave-adjoint}), this integral is zero. Two integrations by parts in $x$ and $t$ reveal that
\[
0 = \int \sigma_0^2 (q_t u) \vert_0^T \, dx - \int \sigma_0^2 (q u_t) \vert_0^T \, dx + \int_0^T \int q \,\left[ \sigma_0^2 \frac{\pd^2}{\pd t^2} - \frac{\pd^2}{\pd x^2} \right] u \, dx dt.
\]
The boundary terms in $x$ have been put to zero from assuming, for instance, free-space propagation. In the first two terms, we can use $u(x,0) = u_t(x,0) = 0$, and recognize that $\sigma_0^2 q_t(x,T) = - d_1(x)$ and $\sigma_0^2 q(x,T) = d_2(x)$. For the last term, we may use (\ref{eq:wave-linearized}). The result is
\[
\int d_1(x) u(x,T) \, dx + \int d_2(x) u_t(x,T) \, dx =  - \int r(x) \int_0^T q(x,t) \frac{\pd^2 u_{\mbox{\scriptsize inc}}}{\pd t^2}(x,t) \, dt \, dx.
\]
On the left, we recognize $\< \begin{pmatrix} d_1 \\ d_2 \end{pmatrix},  \, F[\sigma_0^2] r \>$, where the inner product is in $L^2(\R^n, \R^2)$. The right-hand-side is a linear functional of $r(x)$, from $L^2(\R^n, \R)$ to $\R$, hence we identify
\[
F^*[\sigma_0^2] \begin{pmatrix} d_1 \\ d_2 \end{pmatrix} = - \int_0^T q(x,t) \frac{\pd^2 u_{\mbox{\scriptsize inc}}}{\pd t^2}(x,t) \, dt,
\]
as required.

Notice that this formula for the adjoint operator is motivated by a standard optimization argument that we now reproduce. Consider the misfit functional $J[\sigma^2, u]$ for the data $(d_1, d_2)$, defined as
\[
J[\sigma^2, u] = \frac{1}{2} \int (u(x,T) - d_1(x))^2 + (u_t(x,T) - d_2(x))^2 \, dx,
\]
and under the constraint that $u$ solves the original wave equation (\ref{eq:wave-original}). Then the form of $F^*$ is inspired by the negative Frechet derivative $- \frac{\delta J}{\delta \sigma^2}[\sigma_0^2, u_{\mbox{\scriptsize inc}}]$. In order to see this, we can define a dual variable $q(x,t)$ corresponding to the constraint (\ref{eq:wave-original}), dual variables $\mu_0(x)$ and $\mu_1(x)$ corresponding to the initial conditions for $u$, and write the Lagrangian
\[
L[\sigma^2; u, q, \mu_0, \mu_1] = J[\sigma^2, u] - \int_0^T \int q \, \left[ \sigma^2 \frac{\pd^2}{\pd t^2} - \frac{\pd^2}{\pd x^2} \right] u \, dx dt + \int \mu_0 u(x,0) dx + \int \mu_1 u_t(x,0) dx.
\]
The same integrations by parts as earlier give
\[
L[\sigma^2; u, q, \mu_0, \mu_1] = J[\sigma^2, u] - \int_0^T \int u \, \left[ \sigma^2 \frac{\pd^2}{\pd t^2} - \frac{\pd^2}{\pd x^2} \right] q \, dx dt
\]
\[
\qquad\qquad\qquad + \int \sigma^2 (q u_t) \vert_0^T \, dx - \int \sigma^2 (q_t u) \vert_0^T \, dx + \int \mu_0 u_0 dx + \int \mu_1 u_1 dx.
\]
The Frechet derivatives of $L$ with respect to $u(x,t)$, $u(x,T)$, and $u_t(x,T)$ reveal the adjoint-state equations:
\[
\frac{\delta L}{\delta u(x,t)} = - \left[ \sigma^2 \frac{\pd^2}{\pd t^2} - \frac{\pd^2}{\pd x^2} \right] q(x,t) = 0;
\]
\[
\frac{\delta L}{\delta u(x,T)} = u(x,T) - d_1(x) - \sigma^2 q_t(x,T) = 0;
\]
\[
\frac{\delta L}{\delta u_t(x,T)} = u_t(x,T) - d_2(x) + \sigma^2 q(x,T) = 0.
\]
The derivative of $L$ with respect to $\sigma^2$ gives the desired formula
\[
\frac{\delta J}{\delta \sigma^2} =  \int_0^T q(x,t) \frac{\pd^2 u}{\pd t^2}(x,t) \, dt.
\]
The derivatives with respect to $q$ and $\mu_0$, $\mu_1$ replicate the constraints, and finally the derivatives with respect to $u(x,0)$ and $u_t(x,0)$ are not interesting because they involve $\mu_0$, $\mu_1$, which do not play a role in the expression of $\delta J/\delta \sigma^2$.


Notice that the final conditions for $q$ involve the predicted wavefields $u$ and $u_t$. If we put $\sigma = \sigma_0$ however, and $\sigma_0$ is smooth, then $u = u_{\mbox{\scriptsize inc}}$. In that case, essentially no reflections occur and the wavefields $u_{\mbox{\scriptsize inc}}(x,T)$, $\pd u_{\mbox{\scriptsize inc}} / \pd t (x,T)$ are zero at time $T$, in the region of interest where the data lies. It is only when making corrections to the guess $\sigma_1$ that the predicted wavefields are important in the final condition for $q$.

\bibliographystyle{plain}
\bibliography{cwc}

\begin{thebibliography}{10}

\bibitem{Atk}
F.~V. Atkinson.
\newblock Wave propagation and the bremmer series.
\newblock {\em J. Math. Anal. and Appl.}, 1:255, 1960.

\bibitem{templates-eigen}
Z.~Bai, J.~Demmel, J.~Dongarra, A.~Ruhe, and H.~van~der Vorst.
\newblock {\em Templates for the Solution of Algebraic Eigenvalue Problems: A
  Practical Guide}.
\newblock SIAM, Philadelphia, PA, 2000.

\bibitem{BCL}
A.~Bamberger, G.~Chavent, and P.~Lailly.
\newblock Etude math\'ematique et num\'erique d'un probl\`eme inverse pour
  l'\'equation des ondes \`a une dimension.
\newblock {\em Rapport IRIA-Laboria no. 226}, 1977.

\bibitem{BCL2}
A.~Bamberger, G.~Chavent, and P.~Lailly.
\newblock About the stability of the inverse problem in 1-d wave equations:
  application to the interpretation of seismic profiles.
\newblock {\em Applied Math. and Optim.}, 5(1):1--47, 1979.

\bibitem{templates-linsyst}
R.~Barrett, M.~Berry, T.~F. Chan, J.~Demmel, J.~Donato, J.~Dongarra,
  V.~Eijkhout, R.~Pozo, C.~Romine, and H.~Van der Vorst.
\newblock {\em Templates for the Solution of Linear Systems: Building Blocks
  for Iterative Methods, 2nd Edition}.
\newblock SIAM, Philadelphia, PA, 1994.

\bibitem{BCR}
G.~Beylkin, R.~Coifman, and V.~Rokhlin.
\newblock Fast wavelet transforms and numerical algorithms.
\newblock {\em Commun. on Pure and Appl. Math.}, 44:141--183, 1991.

\bibitem{blumensath-grad-pursuit}
T.~Blumensath and M.~E. Davies.
\newblock Gradient pursuits.
\newblock {\em IEEE Trans. Signal Proc.}, 56(6):2370--2382, 2008.

\bibitem{boyd-convex-optim}
S.~Boyd and L.~Vandenberghe.
\newblock {\em Convex Optimization}.
\newblock Cambridge University Press, New York, New York, 2004.

\bibitem{Bredies}
K.~Bredies and D.~A. Lorenz.
\newblock Iterative soft-thresholding converges linearly.
\newblock {\em submitted}, 2007.

\bibitem{CD}
E.~Cand\`{e}s and L.~Demanet.
\newblock The curvelet representation of wave propagators is optimally sparse.
\newblock {\em Commun. on Pure and Appl. Math.}, 58(11):1472--1528, 2005.

\bibitem{FDCT}
E.~Cand\`{e}s, L.~Demanet, D.~Donoho, and L.~Ying.
\newblock Fast discrete curvelet transforms.
\newblock {\em Multiscale Modeling \& Simulation}, 5(3):861--899, 2006.

\bibitem{candes-stable}
E.~Cand\`{e}s, J.~Romberg, and T.~Tao.
\newblock Signal recovery from incomplete and inaccurate measurements.
\newblock {\em Commun. on Pure and Appl. Math.}, 59(8):1207?--1223, 2005.

\bibitem{candes-robust}
E.~Cand\`{e}s, J.~Romberg, and T.~Tao.
\newblock Robust uncertainty principles: {E}xact signal reconstruction from
  highly incomplete frequency information.
\newblock {\em IEEE Trans. Info. Theory}, 52(2):489--509, 2006.

\bibitem{candes-near-optimal}
E.~Cand\`{e}s and T.~Tao.
\newblock Near-optimal signal recovery from random projections: Universal
  encoding strategies?
\newblock {\em IEEE Trans. Info. Theory}, 52(12):5406--5425, 2006.

\bibitem{candes-reweighted-l1}
E.~J. Candes, M.~B. Wakin, and S.~P. Boyd.
\newblock Enhancing sparsity by reweighted {L1} minimization.
\newblock {\em To appear in J. Fourier Anal. Appl.}, 2007.

\bibitem{CZ}
C.~Castro and E.~Zuazua.
\newblock Concentration and lack of observability of waves in highly
  heterogeneous media.
\newblock {\em Arch. Rat. Mech. Anal.}, 164:39--72, 2002.

\bibitem{chambolle-algo-tv}
A.~Chambolle.
\newblock An algorithm for total variation minimization and applications.
\newblock {\em Journal of Mathematical Imaging and Vision}, 20:89--97, 2004.

\bibitem{chen-basis-pursuit}
S.~S. Chen, D.L. Donoho, and M.A. Saunders.
\newblock Atomic decomposition by basis pursuit.
\newblock {\em SIAM Journal on Scientific Computing}, 20(1):33--61, 1998.

\bibitem{CDD}
A.~Cohen, W.~Dahmen, and R.~DeVore.
\newblock Adaptive wavelet methods for elliptic operator equations: convergence
  rates.
\newblock {\em Math. Comp.}, 70(233):27--75, 2000.

\bibitem{combettes-proximal}
P.~L. Combettes and V.~R. Wajs.
\newblock Gsignal recovery by proximal forward-backward splitting.
\newblock {\em SIAM Journal on Multiscale Modeling and Simulation}, 4(4), 2005.

\bibitem{CLM}
F.~Conrad, J.~Leblond, and J.P. Marmorat.
\newblock Boundary control and stabilization of the one-dimensional wave
  equation.
\newblock {\em Lecture Notes in control and Inform. Sci.}, 178:142--162, 1992.

\bibitem{daubechies-iterated}
I.~Daubechies, M.~Defrise, and C.~De Mol.
\newblock An iterative thresholding algorithm for linear inverse problems with
  a sparsity constraint.
\newblock {\em Commun. on Pure and Appl. Math.}, 57:1413--1541, 2004.

\bibitem{DL}
R.~Dautray and J.L. Lions.
\newblock {\em Mathematical Analysis and Numerical Methods for Science and
  Technology}.
\newblock Springer-Verlag, 1990.

\bibitem{DSC}
L.~Demanet and L.~Ying.
\newblock Discrete symbol calculus.
\newblock {\em submitted}, 2008.

\bibitem{watuwe}
L.~Demanet and L.~Ying.
\newblock Wave atoms and time upscaling of wave equations.
\newblock {\em To appear in Numer. Math.}, 2008.

\bibitem{DeV}
R.~A. DeVore.
\newblock Nonlinear approximation.
\newblock {\em Acta Numerica}, 7:51--150, 1998.

\bibitem{DVL}
R.~A. DeVore and G.~G. Lorentz.
\newblock {\em Constructive approximation}, volume 303 of {\em Grundlehren der
  math. Wissenschaften}.
\newblock Springer, 1993.

\bibitem{donoho-cs}
D.~Donoho.
\newblock Compressed sensing.
\newblock {\em IEEE Trans. Info. Theory}, 52(4):1289--1306, 2006.

\bibitem{donoho-uncertainty}
D.~L. Donoho and P.~B. Stark.
\newblock Uncertainty principles and signal recovery.
\newblock {\em SIAM J. of Appl. Math.}, 49(3):906--931, June 1989.

\bibitem{donoho-homotopy}
D.~L. Donoho and Y.~Tsaig.
\newblock Fast solution of $\ell^1$-norm minimization problems when the
  solution may be sparse.
\newblock {\em to appear in Journal of Mathematical Imaging and Vision}, 2006.

\bibitem{hale-fixed-point-cont}
W.~Yin E.~T.~Hale and Y.~Zhang.
\newblock A fixed-point continuation method for l1-regularized minimization
  with applications to compressed sensing.
\newblock {\em CAAM Technical Report TR07-07}, 2008.

\bibitem{efron-lars}
B.~Efron, T.~Hastie, I.~Johnstone, and T.~Tibshirani.
\newblock Least angle regression.
\newblock {\em Annals of Statistics}, 32(2):407--499, 2004.

\bibitem{EOZ}
B.~Engquist, S.~Osher, and S.~Zhong.
\newblock Fast wavelet based algorithms for evolution equations.
\newblock {\em SIAM J. Sci. Comput.}, 15(4):755--775, 1994.

\bibitem{ErlNab}
Y.~Erlangga and R.~Nabben.
\newblock Multilevel projection-based nested krylov iteration for boundary
  value problems, 2008.

\bibitem{figueiredo-nowak-em}
M.~Figueiredo and R.~Nowak.
\newblock {An {EM} Algorithm for Wavelet-Based Image Restoration}.
\newblock {\em IEEE Trans. Image Proc.}, 12(8):906--916, 2003.

\bibitem{figueiredo-grad-projection}
M.~A.~T. Figueiredo, R.~D. Nowak, and S.~J. Wright.
\newblock Gradient projection for sparse reconstruction: Application to
  compressed sensing and other inverse problems.
\newblock {\em IEEE Journal of Selected Topics in Signal Processing},
  1(4):586--598, 2007.

\bibitem{BookPapanico}
J-P. Fouque, J.~Garnier, G.~Papanicolaou, and K.~Solna.
\newblock {\em Wave Propagation and Time Reversal in Randomly Layered Media},
  volume~56 of {\em Stochastic Modelling and Applied Probability}.
\newblock Springer, 2007.

\bibitem{gorodnitsky-focuss}
I.~F. Gorodnitsky and B.~D. Rao.
\newblock Sparse signal reconstruction from limited data using {FOCUSS}: a
  re-weighted minimum norm algorithm.
\newblock {\em IEEE Trans. Image Proc.}, 45(3):600--616, March 1997.

\bibitem{JMS}
P.~W. Jones, M.~Maggioni, and R.~Schul.
\newblock Universal local parametrizations via heat kernels and eigenfunctions
  of the laplacian.
\newblock {\em Submitted}, 2008.

\bibitem{LS}
R.M. Lewis and W.W. Symes.
\newblock On the relation between the velocity coefficient and boundary value
  for solutions of the one-dimensional wave equation.
\newblock {\em Inverse Problems}, 7(35):597--631, 1991.

\bibitem{LH}
T.~Lin and F.~Herrmann.
\newblock Compressed wavefield extrapolation.
\newblock {\em Geophysics}, 72(5):77--93, 2007.

\bibitem{Lions}
J.~L. Lions and E.~Magenes.
\newblock {\em Non-Homogeneous Boundary Value Problems and Applications}.
\newblock Stochastic Modelling and Applied Probability. Springer-Verlag, 1972.

\bibitem{Mal3}
S.~Mallat.
\newblock {\em A wavelet tour of signal processing, 3rd edition}.
\newblock Academic Press, San Diego, USA, 2008.

\bibitem{Mey1}
Y.~Meyer.
\newblock {\em Wavelets and Operators}.
\newblock Analysis at Urbana, London Math. Soc. Lecture Notes Series 137.
  Cambridge University Press, 1989.

\bibitem{needell-cosamp}
D.~Needell and J.~A. Tropp.
\newblock Cosamp: Iterative signal recovery from incomplete and inaccurate
  samples.
\newblock {\em to appear in Appl. Comp. Harmonic Anal.}, 2008.

\bibitem{nesterov-smooth}
Y.~Nesterov.
\newblock Smooth minimization of non-smooth functions.
\newblock {\em Math. Program.}, 103(1, Ser. A):127--152, 2005.

\bibitem{OZ}
H.~Owhadi and L.~Zhang.
\newblock Numerical homogenization of the acoustic wave equations with a
  continuum of scales, 2008.

\bibitem{Ple}
R.~E. Plessix.
\newblock A review of the adjoint-state method for computing the gradient of a
  functional with geophysical applications.
\newblock {\em Geophysical Journal International}, 167(9):495--503, 2006.

\bibitem{rudelson-sparse}
M.~Rudelson and R.Vershynin.
\newblock On sparse reconstruction from fourier and gaussian measurements.
\newblock {\em Commun. on Pure and Appl. Math.}, 61(8):1025--1045, 2008.

\bibitem{santosa-sparse-spikes}
F.~Santosa and W.~W. Symes.
\newblock Linear inversion of band-limited reflection seismograms.
\newblock {\em SIAM Journal on Scientific and Statistical Computing},
  7(4):1307--1330, 1986.

\bibitem{Smith-wave}
H.~F. Smith.
\newblock A parametrix construction for wave equations with {$C^{1,1}$}
  coefficients.
\newblock {\em Ann. Inst. Fourier (Grenoble)}, 48:797--835, 1998.

\bibitem{Smith-egf}
H.~F. Smith.
\newblock Spectral cluster estimates for {$C^{1,1}$} metrics.
\newblock {\em Amer. J. Math.}, 128:1069--1103, 2006.

\bibitem{Sogge-egf}
C.~D. Sogge.
\newblock Eigenfunction and bochner-riesz estimates on manifolds with boundary.
\newblock {\em Math. Research Letters}, 9:205--216, 2002.

\bibitem{starck-mca}
J.-L. Starck, M.~Elad, and D.L. Donoho.
\newblock Redundant multiscale transforms and their application for
  morphological component analysis.
\newblock {\em Advances in Imaging and Electron Physics}, 132, 2004.

\bibitem{Stolk-thesis}
C.~C. Stolk.
\newblock {\em PhD thesis, On the Modeling and Inversion of Seismic Data}.
\newblock PhD thesis, Utrecht University, 2000.

\bibitem{Sym}
W.~W. Symes.
\newblock Transparency of one-dimensional acoustic media of bounded variation.
\newblock {\em TRIP technical report}, 1985.

\bibitem{Symes-checkpoint}
W.~W. Symes.
\newblock Reverse time migration with optimal checkpointing.
\newblock {\em Geophysics}, 72(5):213--221, 2007.

\bibitem{yin-bregman}
W.~Yin, S.~Osher, J.~Darbon, and D.~Goldfarb.
\newblock Bregman iterative algorithms for compressed sensing and related
  problems.
\newblock {\em CAAM TR07-13}, 2007.

\bibitem{zhu-primal-dual}
M.~Zhu and T.~Chan.
\newblock An efficient primal-dual hybrid gradient algorithm for total
  variation image restoration.
\newblock {\em UCLA CAM Report 08-34}, 2007.

\bibitem{Zie}
W.~Ziemer.
\newblock {\em Weakly Differentiable Functions}.
\newblock Graduate texts in Mathematics. Springer-Verlag, 1989.

\bibitem{Zua1}
E.~Zuazua.
\newblock Exact controllability for semilinear wave equations in one space
  dimension.
\newblock {\em Annales I.H.P. section C}, 10(1):109--129, 1993.

\bibitem{Zua2}
E.~Zuazua.
\newblock Propagation, observation, and control of waves approximated by finite
  difference methods.
\newblock {\em SIAM Rev.}, 47(2):197--243, 2005.

\end{thebibliography}

\end{document}